\newtheorem{Theorem}{Theorem}
\newtheorem{Corollary}{Corollary}
\newtheorem{Lemma}{Lemma}
\newtheorem{Proposition}{Proposition}
\newtheorem{Definition}{Definition}
\newtheorem{Remark}{Remark}
\newcommand{\ptl}{\partial}
\newcommand{\supp}{\text{supp}}
\begin{document}
	
\def\intco{\text{int conv }}

\def\bs{\boldsymbol}
\def\del{\partial}
\def\DOT{\!\cdot\!}
\def\dt{{\Delta t}}
\def\dx{{\Delta x}}
\def\dy{{\Delta y}}
\def\dz{{\Delta z}}
\def\div{\text{div}}
\def\mb{\mathbf}
\def\c{\mb{c}}
\def\e{\mb{e}}
\def\u{\mb{u}}
\def\m{\mb{m}}
\def\r{\mb{r}}
\def\w{\mb{w}}
\def\U{\mb{U}}
\def\R{\mathbb{R}}
\def\S{\mathbb{S}}
\def\T{\mathbb{T}}
\def\Z{\mathbb{Z}}
\def\mbP{\mathbb{P}}
\def\mcA{\mathcal{A}}
\def\mcC{\mathcal{C}}
\def\mcD{\mathcal{D}}
\def\mcE{\mathcal{E}}
\def\mcF{\mathcal{F}}
\def\mcG{\mathcal{G}}
\def\mcH{\mathcal{H}}
\def\mcI{\mathcal{I}}
\def\mcJ{\mathcal{J}}
\def\mcL{\mathcal{L}}
\def\mcN{\mathcal{N}}
\def\mcO{\mathcal{O}}
\def\mcP{\mathcal{P}}
\def\mcU{\mathcal{U}}
\def\mcV{\mathcal{V}}
\def\mfe{\mathfrak{e}}
\def\mfJ{\mathfrak{J}}
\def\mfO{\mathfrak{O}}
\def\mfQ{\mathfrak{Q}}
\def\msD{\mathscr{D}}
\def\msO{\mathscr{O}}
\def\msQ{\mathscr{Q}}
\def\msR{\mathscr{R}}
\def\brho{\bar{\rho}}
\def\bq{\bar{q}}
\def\Rp{\mathbb{R}_+}
\def\bRp{\bar{\mathbb{R}}_+}

\def\tdz{\tilde{z}}
\def\n{\mb{n}}
\def\V{\mb{V}}
\def\bbn{\bar{\n}}
\def\bV{\bar{\V}}
\def\barm{\bar{\m}}
\def\barU{\bar{\U}}
\def\mfD{\mathfrak{D}}
\def\mfR{\mathfrak{R}}
\def\mcX{\mathcal{X}}
\def\tr{\text{tr}}
\def\x{\mb{x}}
\def\y{\mb{y}}
\def\z{\mb{z}}
\def\g{{\mbox{\boldmath $g$}}}
\def\bu{{\mbox{\boldmath $u$}}}
\def\n{{\mbox{\boldmath $n$}}}
\def\bn{{\mbox{\boldmath $n$}}}
\def\veceps{\mbox{\boldmath $\epsilon$}}
\def\vecdel{\mbox{\boldmath $\delta$}}
\def\vecphi{\mbox{\boldmath $\phi$}}
\def\vecpsi{\mbox{\boldmath $\psi$}}
\def\ds{\mbox{d \boldmath $\gamma$}}
\def\vecomi{\mbox{\boldmath $\theta$}}
\def\eps{\varepsilon}
\def\oneb{{1\mbox \tiny b}}
\def\twob{{2\mbox \tiny b}}
\def\disp{\displaystyle}
\def\dspace{\displaystyle \vspace{.15in}}
\def\half{\textstyle \frac12}
\def\Circ{{\footnotesize$\bigcirc$}}
\def\Triangle{{\small$\triangle$}}
\def\Bullet{{\large$\bullet$}}
\def\exac{{\mbox \tiny e}}
\newcommand{\Dtil}{\widetilde{D}}

\def\TIME{\!\times\!}

\title[Non-Uniqueness of Euler system with Source term]{Non-uniqueness of Admissible Weak Solutions to Compressible Euler Systems with Source Terms}

\author[ ]{Tianwen Luo}
\address{The Institute of Mathematical Sciences and department of
mathematics, The Chinese University of Hong Kong, Hong Kong}
\email{tianwen.luo@foxmail.com}
\author[ ]{Chunjing  Xie}
\address{Department of Mathematics, Institute of Natural Sciences, and Ministry of Education Key Laboratory of Scientific and Engineering Computing, Shanghai Jiao Tong University, 800 Dongchuan Road, Shanghai, 200240, China}
\email{cjxie@sjtu.edu.cn}
\author[ ]{Zhouping Xin}
\address{The Institute of Mathematical Sciences and department of
mathematics, The Chinese University of Hong Kong, Hong Kong}
\email{zpxin@ims.cuhk.edu.hk}

\date{}

\begin{abstract}
	We consider admissible weak solutions to the compressible Euler system with source terms, which include rotating shallow water system and the Euler system with damping as special examples. In the case of anti-symmetric sources such as rotations, for general piecewise Lipschitz initial densities and some suitably constructed initial momentum,  we obtain infinitely many global admissible weak solutions. Furthermore, we  construct a class of  finite-states admissible weak solutions to the Euler system with anti-symmetric sources. Under the additional smallness assumption on the initial densities,   we also obtain  multiple global-in-time admissible weak solutions for more general sources including damping.   The basic framework are based on the convex integration method developed by De~Lellis and Sz\'{e}kelyhidi \cite{dLSz1,dLSz2} for the Euler system. One of the main ingredients of this paper is the construction of specified localized plane wave perturbations which are compatible with a given source term.
\end{abstract}


\maketitle
\section{Introduction}

The well-posedness of the compressible Euler system is one of the central issues in hyperbolic balance laws. The uniqueness of admissible solutions is of fundamental importance. Although there is a quite mature well-posedness theory for one dimensional hyperbolic conservation laws with small BV initial data \cite{Dafermos},  the problem for multi-dimensional systems is  very challenging.  Recently, a major breakthrough for the uniqueness problem is made by De Lellis and Sz\'{e}kelyhidi in  \cite{dLSz1,dLSz2}. Inspired by the surprising examples in \cite{Scheffer} and  \cite{Shnirelman00}, they developed a convex integration framework  and obtained infinitely many bounded weak solutions  to the incompressible Euler system. The convex integration methods are later refined to generate even H\"{o}lder continuous solutions for the incompressible Euler system, see  \cite{dLSz4,Isett12,Buckmaster2013transporting}. The ideas  have also been applied to other systems of PDEs; see \cite{Cordoba11,Sz1,Shvydkoy11,IsettVicol} and the references therein. We refer to \cite{dLSz3} for a general survey on the results in this direction.

 Multiple admissible solutions to the compressible Euler system are obtained in  \cite{Chiodaroli,ChdLKr13,ChKr14,dLSz2,Feireisl14}, by an adaptation of the convex integration method. A major consequence is that the  admissible weak solutions  for the polytropic gases in multidimension are  in general non-unique. These solutions are  called ``wild solutions" in the literature, which reflect the flexibility of the solution space with low regularity and are quite different in nature from those in one dimensional setting such as \cite{Smith79}. It has been shown that many of the available criteria \cite{dLSz2,ChdLKr13}, with the exception of vanishing viscosity limit, are not able to single out a unique solution.

It is interesting to investigate the stability mechanisms which may help to rule out the wild solutions. A natural candidate is the lower order dissipation and dispersion, such as damping and rotating forces. 

In this paper, we consider the compressible Euler system with source terms  as follows
\begin{align}
\begin{cases} \label{eq:CES}
\partial_t \rho  + \nabla \cdot (\rho \u)=0,\\
\partial_t (\rho \u)  + \nabla \cdot (\rho \u \otimes \u) + \nabla p(\rho) = \mb{B} (\rho \u),
\end{cases}
\end{align}
where $(x,t)\in \Omega \times [0,\infty)$ with $\Omega=\R^n$ or $\mathbb{T}^n$; $\rho$, $\u$, and $p$ denote the density, velocity, and the pressure of the flows, respectively. Assume that the equation of states satisfies $p(0)=0$ and $p'(\rho)>0$ for $\rho>0$, and $\mb{B}$ is an $n \times n$ constant matrix.  In particular, the effects of damping and rotating forces are included in this model, where in the case of $n=2$ and nondimensionalization, the matrix $\mb{B}$ has the form $\mb{B} =-\mb{I}$ and $\mb{B}=\mb{J}$, respectively, with
\begin{equation}\label{matrixIJ}
\mb{I}=\begin{pmatrix}
1 & 0\\ 0 & 1
\end{pmatrix}\quad  \text{and}\quad  \mb{J}=\begin{pmatrix}
0 & 1\\ -1 & 0
\end{pmatrix}.
\end{equation}

Most of the previous investigations \cite{Chiodaroli,ChdLKr13,ChKr14,dLSz2,Feireisl14} focused on the isentropic Euler system corresponding to $\mb{B}=\mb{0}$ in the system \eqref{eq:CES}. The existence of infinitely many bounded admissible solutions is first showed by De Lellis and Sz\'{e}kelyhidi in \cite{dLSz2} for a special class of piecewise constant initial densities. The local-in-time existence of multiple admissible solutions for general smooth initial densities  is proved in \cite{Chiodaroli}. When the initial density is close to a constant, the global existence of multiple admissible solutions is obtained in \cite{Feireisl14}. In these works \cite{dLSz2,Chiodaroli,Feireisl14}, the initial momentum are suitably constructed and not explicit. For some classical Riemann initial data connected by shocks, it is proved in  \cite{ChdLKr13,ChKr14} show that the admissible weak solutions for polytropic gases in two-dimension are not unique.  The non-uniqueness issue has also been studied in the class of dissipative weak solutions   for the Euler-Fourier system \cite{ChFeKr13},  the Euler-Korteweg-Poisson system \cite{DoFeMa14}, and the Savage-Hutter model  \cite{FGS15}, respectively, by adaptations of the convex integration method. 

The global well-posedness of classical solutions with damping and rotations has been well-known  under certain smallness assumptions on the initial data; see for examples \cite{ChengXie2011classical,Hsiao1992convergence,Nishida1978nonlinear,Wang2001pointwise}. This is in sharp contrast with the Euler system whose classical solutions  generally develop singularities no matter how smooth and small the initial data are; see for examples  \cite{Sideris,Rammaha}. The main aim of this paper is to investigate whether these lower order dissipations or dispersions  can prevent the loss of uniqueness for admissible weak solutions. Our next aim is to construct wild solutions with special structures in order to gain a better understanding for the fine properties of weak solutions.

 The weak solutions of the system \eqref{eq:CES} are the ones which satisfy \eqref{eq:CES} in the sense of distribution, i.e.,
\begin{Definition}[Weak solutions]  A pair $(\rho,\m)\in L^\infty(\R^n \times [0,\infty);(0,\infty) \times \R^n)$  is said to be a bounded weak solution of the system \eqref{eq:CES} with initial data
	\begin{equation}\label{IC}
	\rho(x, 0)=\rho_0\quad \text{and}\quad \m(x, 0)=\m^\diamond(x),
	\end{equation}
	if for any $ (\varphi,\psi) \in C_c^\infty( \R^n \times [0,\infty); \R \times \R^n)$, it holds that
	\begin{align*}
	&\int_{0}^{\infty}\int_{\R^n} (\rho \partial_t \varphi + \m \cdot \nabla \varphi) dx dt = -\int_{\R^n} \rho_0 \varphi(x,0) dx,\\
	&\int_{0}^{\infty}\int_{\R^n}\left(\m \cdot \partial_t \psi  + \frac{\m \otimes \m}{\rho} : \nabla \psi + p(\rho) \nabla \cdot \psi +  \psi \cdot \mb{B}\m \right)dxdt = -\int_{\R^n} \m^\diamond \cdot \psi(x,0) dx.
	\end{align*}
\end{Definition}
It is well-known that weak solutions  are in general not unique.  Several admissibility criteria to exclude the non-physical solutions have been introduced. An important one is the so called entropy condition. Set
\begin{equation}\label{defmcE}
\mcI(\rho)=\rho \int_0^\rho \frac{p(r)}{r^2}dr.
\end{equation}
It is easy to see that $(\mcI(\rho)+ \frac{|\m|^2}{2\rho}, (\mcI(\rho)+ \frac{|\m|^2}{2\rho}+p(\rho))\frac{\m}{\rho})$ is an entropy-entropy flux pair for the system \eqref{eq:CES} \cite{Dafermos}. Thus one can define  the admissible weak solutions to the system \eqref{eq:CES} as follows.
\begin{Definition}[Admissible weak solutions]
	A bounded weak solution $(\rho,\u)$ is called an  admissible weak solution to \eqref{eq:CES} with initial data \eqref{IC} if for any non-negative test function  $\varphi \in C_c^\infty( \R^n \times [0,\infty) )$, it holds that
	\begin{equation}
	\begin{aligned}
	&\int_{0}^{\infty}\int_{\R^n}\left(\mcI(\rho)+ \frac{|\m|^2}{2\rho}\right)  \partial_t \varphi + \left(\mcI(\rho)+ \frac{|\m|^2}{2\rho}+p(\rho)\right) \frac{\m}{\rho} \cdot \nabla \varphi+\frac{\m}{\rho} \cdot \mb{B}\m \varphi dxdt  \\
	&+\int_{\R^n} \left(\mcI(\rho_0(x))+ \frac{|\m^\diamond(x)|^2}{2\rho_0}\right) \varphi(x,0)dx \geq 0, \label{ineq:energy}
	\end{aligned}
	\end{equation}
	where $\mcI$ is defined in \eqref{defmcE}.
\end{Definition}
In fact, the inequality \eqref{ineq:energy} is exactly the energy inequality for the system \eqref{eq:CES}, and each strong solution to the Cauchy problem \eqref{eq:CES} and \eqref{IC} satisfies \eqref{ineq:energy} with equality. Furthermore, if there is a strong solution for the problem \eqref{eq:CES} and \eqref{IC}, then any admissible weak solutions must coincide with it, see \cite{Dafermos}. 

Now the main results in the paper can be states as follows. We start with the case in which $\mb{B}$ is an anti-symmetric matrix, such as in the rotating shallow water system. For piecewise constant initial densities, the following results hold.

\begin{Theorem}\label{Thm:FiniteValue}
	Suppose that $\mb{B}$ is an anti-symmetric constant matrix.  Assume that $\Omega=\R^n$ or $\mathbb{T}^n$.  Let $\rho_0$ be a piecewise constant positive function in $\Omega$, in the sense that there are a family of at most countably many mutually disjoint open sets $\Omega_i$ with the $n$-dimensional Hausdorff measure $\mcH^n(\Omega \setminus (\cup_i \Omega_i))=0$ and positive constants $\{\bar{\rho}_i\}$ with   $0< \inf_i \bar{\rho}_i  \leq \sup_i \bar{\rho}_i < \infty$, such that
	\[ \rho_0(x) \equiv \bar{\rho}_i \quad \text{ for }x \in \Omega_i.\]  
	Then there exists an $\m^\diamond\in L^\infty(\R^n)$  such that there are infinitely many global bounded admissible weak solutions $(\rho,\m)$ to the Cauchy problem \eqref{eq:CES} and \eqref{IC}.

	Furthermore, either of the following two cases holds: 
	\begin{enumerate}
	 	\item $(\rho,\m)(x,t) = (\brho_i,0) \quad \text{ for a.e. } (x,t) \in \Omega_i \times [0,\infty)$;
	 	\item $(\rho,\m)$ has exactly  $N_n^*$ states in $\Omega_i \times [0,\infty)$, 
	 	where $N_n^*=\frac{n(n+3)}{2}$.
	\end{enumerate}
	There exists at least one $i$ such that the above second case holds in $\Omega_i \times [0,\infty)$.
\end{Theorem}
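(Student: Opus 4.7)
The plan is to freeze the density at its initial value, $\rho(x,t) \equiv \rho_0(x)$, so that on each component $\Omega_i$ where $\rho \equiv \brho_i$ the continuity equation reduces to $\nabla\cdot\m = 0$. Introducing the traceless symmetric tensor $\mb{U} = \frac{\m\otimes\m}{\brho_i} - \frac{|\m|^2}{n\brho_i}\mb{I}$ and the generalized pressure $q = p(\brho_i) + \frac{|\m|^2}{n\brho_i}$, the momentum equation rewrites as the linear underdetermined system
\begin{equation*}
\nabla\cdot\m = 0, \qquad \partial_t \m + \nabla\cdot\mb{U} + \nabla q = \mb{B}\m,
\end{equation*}
coupled with the pointwise nonlinear constraint $\frac{\m\otimes\m}{\brho_i} - \mb{U} = \frac{|\m|^2}{n\brho_i}\mb{I}$. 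This is the De~Lellis--Sz\'ekelyhidi reformulation \cite{dLSz2} adapted to the source-driven case; the unknowns $(\m,\mb{U},q)$ live in a space of dimension exactly $N_n^* = n(n+3)/2$, which is where the count in case~(2) comes from.

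The main technical step is to construct, for any prescribed base state in the interior of the convex hull of the constraint set, localized plane-wave-type perturbations that solve the linear system \emph{with source} $\mb{B}\m$ exactly, while producing a quantitative oscillation of the kinetic energy. For $\mb{B}=\mb{0}$ one can use a potential $(\m,\mb{U},q) = \mcL(h(\xi\cdot x - \sigma t))$ with $h$ a scalar profile and $\mcL$ a linear operator built from the wave cone of the linearization. Here I would take a high-frequency ansatz $(\m_\lambda, \mb{U}_\lambda, q_\lambda) = \mcL(\eta(x,t)\, h_\lambda(\xi\cdot x - \sigma t))$, with $\eta$ a smooth space-time cutoff and $h_\lambda$ oscillating at frequency $\lambda$, which solves the source-free system up to an error of order $\lambda^{-1}$, and then add an explicit lower-order corrector absorbing $\mb{B}\m_\lambda$. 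Antisymmetry of $\mb{B}$ is decisive: $\m\cdot\mb{B}\m = 0$ means the corrector does not interact with the kinetic energy at leading order, so the oscillation estimate needed to generate convex integration iterates survives the limit $\lambda\to\infty$. This is the step I expect to be the main obstacle, and it is the novelty relative to \cite{dLSz2,Chiodaroli,Feireisl14}.

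With such perturbations available, the convex integration framework applies in the standard way. I would define $X_0$ as the set of smooth strict subsolutions $(\m, \mb{U}, q)$ on $\bigcup_i \Omega_i \times (0,\infty)$ which solve the linear system (with source) and satisfy the pointwise constraint with strict inequality, and let $X$ be its closure in the weak-$*$ topology, which is a complete metric space. A starting subsolution is produced by an explicit piecewise construction: zero on the $\Omega_i$ intended to remain trivial in case~(1), and a non-trivial piecewise-constant subsolution with matching divergence-free potential across interfaces on at least one region $\Omega_{i_0}$, guaranteeing the final clause. The Baire category argument of \cite{dLSz2} then shows that the set of exact solutions is residual in $X$, giving infinitely many global weak solutions. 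For case~(2), I would run the iteration within a carefully chosen closing family of $N_n^*$ wave directions in the wave cone, so that the resulting weak limits attain exactly $N_n^*$ values on $\Omega_{i_0}$ — matching the dimension count identified above.

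The initial momentum $\m^\diamond$ is taken to be the initial trace of the starting subsolution, which is piecewise constant on each $\Omega_i$ and in particular bounded. Admissibility is automatic for the resulting exact solutions: the density is piecewise constant in $x$ and constant in $t$ on each $\Omega_i$, so $\mcI(\rho)$ is constant; the constraint forces the kinetic energy to saturate at $nq/2$; and the source contributes $\m\cdot\mb{B}\m/\brho_i = 0$ pointwise by antisymmetry of $\mb{B}$. Hence \eqref{ineq:energy} holds with equality, confirming that every constructed solution is admissible and completing the scheme.
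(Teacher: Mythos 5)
Your overall reformulation and the localized-plane-wave strategy match the paper's, and the observation that the source corrector can be absorbed at order $\lambda^{-1}$ is indeed the key to making the plane-wave calculus survive the source term. (One precision: the paper's localized plane-wave construction in Lemma~\ref{Lemma:LPW} works for \emph{any} constant matrix $\mb{B}$; antisymmetry enters only at the very end, in the admissibility check, via $\m\cdot\mb{B}\m=0$. It plays no role in the oscillation estimate.) However, three parts of your argument contain genuine gaps.

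First, the ``exactly $N_n^*$ states'' claim is not obtained by restricting to $N_n^*$ wave directions. Iterating along finitely many directions of the wave cone does not bound the number of values attained by the limit. What the paper does is more subtle: it generalizes the notion of strict subsolution to allow an arbitrary compact constraint set $K_{(x,t)}\subset K_{\rho,q}$, and then invokes Lemma~\ref{Lemma:FiniteValue} to produce, for each $\Omega_i$ with $\bar q_i>0$, a set $\mathcal{K}_i$ of exactly $N_n^*$ points of $K_{\brho_i,\bar q_i}$ that is \emph{non-degenerate} (every $N_n$-element subset is affinely independent after subtracting $0$) and has $0\in\intco\mathcal{K}_i$. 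The construction of $\mathcal{K}_i$ itself requires a delicate perturbation argument together with Carath\'eodory's theorem; the count $N_n^*=N_n+1$ is a Carath\'eodory count in $\R^n\times\S_0^{n\times n}$, not a dimension count of $(\m,\U,q)$ as you suggest. Moreover, to get \emph{exactly} $N_n^*$ states (not fewer) one needs that $0$ has a unique representation as a convex combination of $\mathcal{K}_i$ (non-degeneracy), combined with the zero-spatial-mean property of the perturbations: integrating the solution over $\Omega_i\times J$ gives zero, forcing positive proportions of every state. None of this appears in your sketch.

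Second, $\m^\diamond$ cannot be the initial trace of the starting subsolution. The starting subsolution in the paper is $(\rho_0,0,0,q)$, whose momentum is identically zero; if $\m^\diamond=0$ (or any piecewise-constant field of small magnitude), the energy inequality \eqref{ineq:energy} fails immediately: the exact solution has kinetic energy density $\frac{nq}{2}>0$ for $t>0$, jumping from the lower initial energy, which makes the left side of \eqref{ineq:energy} strictly negative for appropriate $\varphi\geq 0$ supported near $t=0$. The paper avoids this by running convex integration on $(-1,\infty)$, so that at $t=0$ the solution is already oscillating, and sets $\m^\diamond=\m^\flat(\cdot,0)$ with $|\m^\diamond|^2=n\rho_0 q(\cdot,0)$ — a highly oscillatory, decidedly non-piecewise-constant function. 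This is the content of Corollary~\ref{Cor:Sol}.

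Third, your admissibility verification omits the flux term $\int(\mcI(\rho)+\frac{|\m|^2}{2\rho}+p(\rho))\frac{\m}{\rho}\cdot\nabla\varphi$. On each $\Omega_i$ the coefficient $P(\rho,q)$ is a different constant, and $\nabla\cdot\m^\flat=0$ holds only in the interior, so jump contributions across $\partial\Omega_i$ must be ruled out. The paper handles this by approximating $\m^\flat$ with divergence-free fields $\tilde\m_j\in C_c^\infty(\mcD)$ that vanish near each interface, which is supplied by the construction in Corollary~\ref{Cor:Sol}; this step is not automatic and needs to be built into the scheme.
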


Several remarks are in order.

\begin{Remark}
	As long as the piecewise constant functions satisfy the Rankine-Hugoniot conditions for the compressible Euler system, they are weak solutions for the system \eqref{eq:CES} associated with $\mb{B}=0$. However, if a piecewise constant function is a weak solution of the rotating shallow water system, it satisfies not only the Rankine-Hugoniot conditions, but also the algebraic system $\mb{B}\m=0$. It is quite surprising that Theorem \ref{Thm:FiniteValue} shows that we can still obtain families of non-trivial finite-states solutions in the class of bounded admissible solutions. This also implies that in the case  $\mb{B}=\mb{J}$ the finite-states weak solution $(\rho,\m)$ can not be continuous at any point in $\Omega_i \times [0,\infty)$ where the second case in Theorem \ref{Thm:FiniteValue} holds.
\end{Remark}

\begin{Remark}
	Theorem \ref{Thm:FiniteValue} is also inspired by the study on
	the problem of finding deformations with finitely many gradients in non-convex calculus of variations (see \cite{Kirchheim2003rigidity}).
\end{Remark}

\begin{Remark}
	For the class of Riemann initial data considered in \cite{ChdLKr13,ChKr14}, we can also show the existence of infinitely many admissible finite-states solutions to \eqref{eq:CES} with $\mb{B}=0$.
\end{Remark}

When the initial density is a general piecewise Lipschitz continuous  function, the following results hold.
\begin{Theorem}\label{Thm:GeneralDensity}
	Suppose that $\mb{B}$ is an anti-symmetric constant matrix. Let $\rho_0$ be a piecewise Lipschitz function in $\R^n$, then there exists an $\m^\diamond\in L^\infty(\R^n)$  such that there are infinitely many global bounded admissible weak solutions $(\rho,\m)$ to the Cauchy problem \eqref{eq:CES} and \eqref{IC}. 
	
	Furthermore, there exists a $T_*>0$ such that each of these  solutions $(\rho,\m)$ reduces to locally finite-states when $t>T_*$, i.e. for a.e. $(x,t) \in \R^n \times (T_*,\infty)$, there exists a neighborhood $\mcN$ of $(x,t)$ such that $(\rho,\m)$ has exactly $N_n^*$ states in $\mcN$.
\end{Theorem}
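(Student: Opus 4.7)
My plan is to adapt the De~Lellis--Sz\'ekelyhidi convex integration scheme to the source-term setting, with the main difficulty being the construction of localized plane-wave perturbations compatible with $\mathbf{B}\m$, and with additional care taken so the resulting solutions take only finitely many values locally after some time $T_*$. The anti-symmetry of $\mathbf{B}$ is used essentially to preserve the pointwise energy identity along the iteration.

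The first step is to construct a smooth subsolution $(\bar\rho,\bar\m,\bar U,\bar P)$, where $\bar U$ is symmetric traceless and $\bar P$ is scalar, satisfying the relaxed system
\begin{align*}
\partial_t \bar\rho + \nabla\cdot \bar\m &= 0,\\
\partial_t \bar\m + \nabla\cdot \bar U + \nabla \bar P &= \mathbf{B}\bar\m,
\end{align*}
with the strict inequality $|\bar\m|^2/\bar\rho < n\,e(x,t)$ for a suitable prescribed energy density $e$. Because $\rho_0$ is only piecewise Lipschitz, I would first extend it smoothly on each Lipschitz piece by solving the linear continuity equation driven by a chosen smooth $\bar\m$ (obtained, say, as a gradient of a potential so that $\nabla\cdot\bar U+\nabla\bar P$ absorbs the source). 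The waiting time $T_*$ is introduced to allow the smooth subsolution to separate away from the initial density singularities, so that after $T_*$ the subsolution can be arranged so that locally around every $(x,t)$ the values $(\bar\m,\bar U)$ sit at the barycenter of $N_n^*=\frac{n(n+3)}{2}$ fixed extremal points of the wave cone restricted to the level set $\{|\m|^2/\rho=ne\}$; this is the geometric configuration that, after convex integration, produces a solution taking exactly $N_n^*$ values.

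The next step is to design the building-block perturbations. Classical plane waves exactly solve the homogeneous linearized system, but here a perturbation $(\delta\m,\delta U,\delta P)$ must satisfy $\partial_t\delta\m+\nabla\cdot\delta U+\nabla\delta P=\mathbf{B}\delta\m$ and still be compactly supported in space--time with oscillations along a prescribed null direction of the wave cone. Using the anti-symmetry of $\mathbf{B}$ (so that $\delta\m\cdot\mathbf{B}\delta\m\equiv 0$, preserving the saturation of the energy), I would construct these perturbations via a potential ansatz of the form $\delta\m = \mathbf{B}$-modified curl of a vector potential, and correct by lower-order terms through a Duhamel/iteration argument against the zeroth-order operator $\mathbf{B}$; a cutoff then localizes them at the cost of an arbitrarily small error that gets absorbed by the strict subsolution gap. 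With these source-compatible building blocks in hand, I would run the standard Baire-category argument in the space $X_0$ of such admissible subsolutions with fixed initial data and energy, equipped with a weak $L^2$ topology: the generic element of $X_0$ saturates $|\m|^2/\rho=ne$ and is therefore a genuine admissible weak solution of \eqref{eq:CES}, yielding uncountably many solutions; see De~Lellis--Sz\'ekelyhidi.

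For the locally finite-states conclusion after $T_*$, the last iteration step is performed by restricting the added oscillations to the $N_n^*$ prescribed extremal directions fixed by the subsolution at time $T_*$; a Vitali-type covering of $\R^n\times(T_*,\infty)$ combined with a measurable selection produces a solution that, in each covering cell, takes exactly those $N_n^*$ values. The main obstacle I anticipate is producing localized plane waves truly compatible with $\mathbf{B}\m$: one cannot simply append $\mathbf{B}\m$ as a harmless lower-order term because convex integration forces high-frequency oscillations on which $\mathbf{B}$ is not small relative to the transport derivatives; the resolution is a careful scale-separation between the oscillation frequency $\lambda\to\infty$ and the fixed matrix $\mathbf{B}$, combined with the anti-symmetry identity $\delta\m\cdot\mathbf{B}\delta\m=0$ that prevents the source from draining or injecting energy into the perturbation, so that the energy saturation needed for the Baire argument is preserved in the limit.
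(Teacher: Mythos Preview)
Your overall framework---subsolutions, source-compatible localized plane waves via high-frequency scale separation, and the anti-symmetry of $\mathbf{B}$ for the energy identity---matches the paper's. However, there is a genuine gap in your subsolution construction, and it is precisely the heart of this theorem.

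You propose to ``extend $\rho_0$ smoothly on each Lipschitz piece by solving the linear continuity equation driven by a chosen smooth $\bar\m$'' and to use $T_*$ as a ``waiting time'' for the subsolution to ``separate away from the initial density singularities.'' This does not explain how the density $\rho(x,t)$ becomes \emph{locally constant} for $t>T_*$, which is essential for the finite-states conclusion: the constrain sets $K_{\rho(x,t),q(x,t)}$ must themselves be locally constant before a fixed $N_n^*$-point subset can be selected. More critically, the admissibility inequality reduces to a Riccati-type ODE $\chi' \le -C_3\overline{\varrho}\,\chi^{3/2} - C_2\chi - C_1\overline{\varrho}\,\chi^{1/2} - C_0$ for the auxiliary pressure level $\chi(t)$, and its solutions blow up in finite time unless the density oscillation is small. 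This is exactly why the earlier result of Feireisl required the small-oscillation hypothesis \eqref{ass:smallness}; a smooth ansatz on each Lipschitz piece, as you describe, inherits the same obstruction when $\rho_0$ has large variation.

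The paper's key idea, which your proposal is missing, is a \emph{non-smooth} subsolution ansatz: apply a Whitney decomposition of each Lipschitz piece $\Omega_i$ into cubes $\msQ^{(j)}$ with side-lengths $r_j\lesssim (1+\overline{\varrho})^{-1}\vartheta$; on each cube solve a Neumann Poisson problem $\Delta\Psi_j=\rho_j^\sharp-\rho_0$ with $\rho_j^\sharp$ the cube average, and set $\m=h_T'(t)\nabla\Psi_j$, so that $\rho$ is \emph{actively driven} from $\rho_0$ to the piecewise-constant values $\rho_j^\sharp$ by time $T$. The smallness of the cubes makes the local oscillation $|\rho_0-\rho_j^\sharp|\lesssim r_j\overline{\varrho}\lesssim\vartheta$ uniformly small, so the Riccati ODE for $\chi$ survives on $[0,T]$; for $t\ge T$ the subsolution is genuinely piecewise constant and one is back in the setting of Theorem~\ref{Thm:FiniteValue}. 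Your scale-separation insight for the plane waves is correct and close to the paper's actual construction (high-frequency plane waves of the homogeneous operator $\mathscr{L}_0$, with the $O(\lambda^{-1})$ source error corrected by an explicit divergence-inverse $\mathcal{R}$), but the subsolution step is where this particular theorem lives or dies.
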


\begin{Remark}
	The densities of the solutions in Theorem \ref{Thm:GeneralDensity} in general develop discontinuities with complicated geometry even if they are smooth initially. Theorem \ref{Thm:GeneralDensity}  removes the key small-oscillation assumptions on the initial density in \cite{Feireisl14}, where the densities remain smooth for all time.  
\end{Remark}

When $\mb{B}$ is a general matrix, set 
\begin{align}\label{eq:Defb}
\beta = \max\left[0,\sup\{-\xi^{T}\mb{B} \xi:\xi \in \R^{n}, |\xi|=1\}   \right].
\end{align}
Our main results on the non-uniqueness of admissible weak solutions to the Euler system with general source term are as follows.
\begin{Theorem}
	\label{Thm:GeneralSource} Suppose that $0<\check{\rho}  < \hat{\rho}$ are two positive constants. 
	There exists a positive constant $\bar{\eps}$ depending on $\check{\rho},\hat{\rho}$ and $\beta$. Given $\eps\in (0, \bar{\eps})$, if $\rho_0$ satisfies
	\[ 0<\check{\rho} \leq \rho_0 \leq \hat{\rho}\]
	and
	\begin{align}\label{ass:smallness}
	\|(\rho_0-\rho^\sharp,\nabla \rho_0) \|_{L^\infty(\T^n)} \leq \eps
	\quad
	\text{where}\quad
	\rho^\sharp= \dfrac{1}{|\mathbb{T}^n|}\int_{\mathbb{T}^n} \rho_0(x)dx,
	\end{align}
	then there exists an $\m^\diamond\in L^{\infty}(\mathbb{T}^n)$ such that
	the Cauchy problem \eqref{eq:CES} and \eqref{IC} has infinitely many  global-in-time bounded  admissible weak solutions which also satisfy 
	\begin{align}\label{ineq:decay1}
	\|(\rho-\rho^\sharp,\m)\|_{L^\infty(\mathbb{T}^n)} \leq \kappa e^{-\beta t},
	\end{align}
	where $\kappa$ depends on $\eps$ and tends to zero as $\eps$ goes to zero. 
\end{Theorem}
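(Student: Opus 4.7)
The plan is to extend the convex integration framework used in the antisymmetric cases of Theorems~\ref{Thm:FiniteValue} and \ref{Thm:GeneralDensity} (and by \cite{Feireisl14} for $\mb{B}=\mb{0}$) to a general matrix $\mb{B}$ whose symmetric part is allowed to dissipate kinetic energy at rate $\beta$. First, I would construct a smooth reference pair $(\bar\rho,\bar\m)$ solving the continuity equation $\partial_t\bar\rho+\nabla\cdot\bar\m=0$ with $\bar\rho(\cdot,0)=\rho_0$, and satisfying the exponential bound
\begin{equation*}
\|(\bar\rho-\rho^\sharp,\bar\m)\|_{L^\infty(\mathbb{T}^n)}+\|\nabla\bar\rho\|_{L^\infty(\mathbb{T}^n)}\lesssim \eps\, e^{-\beta t}.
\end{equation*}
The hypothesis \eqref{ass:smallness} is precisely what makes this feasible while keeping $\bar\rho$ inside $[\check\rho,\hat\rho]$ and controlling $\nabla p(\bar\rho)$.

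Next, I pass to the relaxed system in the variables $(\rho,\m,\mb{U},\chi)$, where $\mb{U}$ is a symmetric trace-free matrix and $\chi\geq 0$ is a scalar, calling a quadruple a \emph{subsolution} if it satisfies the continuity equation together with the linear momentum equation
\begin{equation*}
\partial_t\m+\nabla\cdot\mb{U}+\nabla\!\left(\chi+p(\rho)\right)=\mb{B}\m
\end{equation*}
and the strict pointwise inequality $\frac{\m\otimes\m}{\rho}-\mb{U}<\chi\,\mb{I}$ in the sense of symmetric matrices; saturation recovers an exact solution of \eqref{eq:CES} with $|\m|^2/\rho=n\chi$. Choosing $\chi$ of order $\kappa^2 e^{-2\beta t}$ automatically enforces the target decay $|\m|\lesssim \kappa\, e^{-\beta t}$ in \eqref{ineq:decay1}. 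The quadruple $(\bar\rho,\bar\m,\bar{\mb{U}},\bar\chi)$ built from the reference profile is turned into an initial strict subsolution by absorbing the residual source $\mb{B}\bar\m$ and the pressure gradient $\nabla p(\bar\rho)$ into the pair $(\bar{\mb{U}},\bar\chi)$; the smallness of $\eps$ keeps the matrix inequality strict on all of $\mathbb{T}^n\times[0,\infty)$.

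The bulk of the proof is then a Baire-category convex integration iteration. The subsolution is perturbed by localized plane waves of the form $\tilde\m=\sum_k a_k(x,t)\cos(\lambda\phi_k)\eta_k$ with matching stress perturbations $\tilde{\mb{U}}$, built from the constructions developed earlier in the paper and adapted to be compatible with the non-divergence-form source: each oscillatory building block is augmented by a lower-order corrector that cancels the $\mb{B}\tilde\m$ contribution to the linearized momentum equation. Iterating yields infinitely many exact weak solutions sharing a common datum $\m^\diamond$ at $t=0$, and admissibility \eqref{ineq:energy} is enforced by selecting $\bar\chi$ so that $\mcI(\bar\rho)+\tfrac{n}{2}\bar\chi$ is a time-non-increasing majorant of the total energy of every solution produced; the dissipative direction of $\mb{B}$ quantified by $\beta$ is precisely what makes this choice possible. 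I expect the main obstacle to be exactly the construction of localized plane waves compatible with a prescribed $\mb{B}$, as flagged in the abstract: the perturbations must simultaneously cancel the source term to leading order and generate enough oscillation to saturate the strict matrix inequality, all while respecting the decay budget $\kappa e^{-\beta t}$ uniformly in the iteration; choosing $\bar\eps=\bar\eps(\check\rho,\hat\rho,\beta)$ small enough so that this regime persists is what ultimately closes the scheme.
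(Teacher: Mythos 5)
Your proposal follows essentially the same route as the paper's proof: an acoustic-potential reference profile whose time derivative supplies the continuity flux, a relaxed subsolution formulation with a decaying scalar budget $\chi\sim e^{-2\beta t}$ to both keep the matrix inequality strict and to close the energy inequality against the worst-case anti-dissipation measured by $\beta$, and then the $\mb{B}$-compatible localized plane waves plus convex integration to saturate the constraint set. The one cosmetic difference is that you posit a reference density with $\bar\rho-\rho^\sharp$ decaying exponentially, whereas the paper multiplies by a compactly supported cutoff $h(t/T)$ so the reference density equals $\rho^\sharp$ exactly after time $1$; this reduces all the transport estimates to a compact time interval and makes the Riccati-type inequality for $\chi$ easier to close, but the essential idea is the same.
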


\begin{Remark}
	Theorem \ref{Thm:GeneralSource} is also true  for the Cauchy problem in $\R^n$ provided that there exists a constant $\bar\rho$ such that $\rho_0-\bar\rho$  decays sufficiently  fast at infinity.
\end{Remark}

\begin{Remark}
	A weak solution to the Cauchy problem \eqref{eq:CES} and \eqref{IC} is said to be dissipative if the total energy is non-increasing.
	 Recall that admissible weak solutions  satisfy the local energy inequality \eqref{ineq:energy}, which  implies the non-increasing of the total energy. The admissible criterion \eqref{ineq:energy} is more restrictive than the dissipative criterion. In fact Theorem \ref{Thm:GeneralSource} holds without the smallness assumption \eqref{ass:smallness} in the class of dissipative weak solutions. For the recent studies on uniqueness of dissipative weak solutions, see  \cite{DoFeMa14,FGS15}.
\end{Remark}

We now make some comments on the analysis in this paper. We adapt the convex integration framework in \cite{dLSz2,Chiodaroli} and  reformulate the system \eqref{eq:CES} as a linear system $\mathscr
{L}$ coupled with nonlinear pointwise constrains. Subsolutions are defined by relaxing the constrain sets. In order to obtain finite-states solutions we consider more general relaxed sets than those in \cite{dLSz2,Chiodaroli}. The key for the convex integration scheme \cite{dLSz2,Sz2} is to analyze the wave cone $\Lambda_{\mathscr{L}}$ which generates localized plane waves of the linear system $\mathscr{L}$. Since the plane waves of $\mathscr{L}$ are different when the source terms appear, the results for the Euler system with $\mb{B}=\mb{0}$ in \cite{dLSz1,dLSz2}  do not apply directly.    Our key observation is that in the high frequency regime, the plane wave solutions for the homogeneous system $\mathscr{L}_0$ are good approximations for $\mathscr{L}$. After correcting the errors by solving divergence equations \cite{dLSz4}, we find an elementary method to generate localized plane wave solutions to $\mathscr{L}$  for any constant matrix $\mb{B}$. In the case that $\mb{B}=\mb{0}$, our method also gives an alternative construction for localized plane waves from the ones in \cite{dLSz1,dLSz2}.

 To obtain weak solutions, we use a convex integration scheme to iterate  subsolutions.  The scheme is inspired by \cite{Choffrut,Sz2}. One of the key ingredients of this paper is that we get some suitable relaxed sets with $\frac{n(n+3)}{2}$ extreme points  by a careful perturbation argument using the Carath\'{e}odory's theorem for convex sets, which plays an important role in constructing finite-states weak solutions. When the initial densities have large variations, we construct   non-smooth subsolution ansatz with complicated geometry building on the smooth ansatz in \cite{Feireisl14}. This allows us to remove the  key small-oscillation 
assumption on initial densities in \cite{Feireisl14}, and is the key to proving Theorem \ref{Thm:GeneralDensity}. The major idea to prove Theorem \ref{Thm:GeneralSource} is to construct a strict subsolution ansatz employing the acoustic potential $\Psi$ in \cite{Feireisl14}. 

The rest of the paper is organized as follows. The reformulations of the problem and the concepts of subsolutions  are introduced in  Section \ref{secsub}. We analyze the localized plane waves  in Section \ref{secplane}. In Section \ref{secconint}, weak solutions are obtained by iterating the subsolutions using the localized plane waves. In Section \ref{secadweak}, suitable subsolutions ansatz with various initial data are constructed and the main results are proved.

\subsection*{Notations}
The following notations are used in the rest of the paper.
The bold letters in lower and upper cases are used to denote vectors and matrices, respectively.  Set $\R_+=(0,\infty)$, $\bRp=[0,\infty)$, and $S^{n-1}$ to be the unit sphere of $\R^n$. Denote by $\S_0^{n\times n}$ the vector space consisting of symmetric trace-free $n \times n$ matrices.  Let 
\begin{equation}
N_n=\dim(\R^n \times \S_0^{n \times n})=\frac{n(n+3)}{2}-1, \quad \text{ and } \quad N_n^* =\frac{n(n+3)}{2}. \label{def:N_n}
\end{equation}
Given a set $K \subset \R^n \times \S_0^{n \times n}$, denote by $\text{conv } K$ the convex hull of $K$, and $\intco K$ the interior of $\text{ conv } K$. The space $C^k(\mfD; \mfR)$ is the set of functions mapping from $\mfD$ to $\mfR$ with continuous derivatives up to order $k$ ($k$ can be infinity),  and $C_c^k(\mfD;\mfR)$ is a subset of $C^k(\mfD;\mfR)$ for functions with compact support. Suppose $f \in C^0(\mfD;\mfR)$, let 
\begin{align}
\text{image}(f)=\{\mathfrak{r}  \in \mfR: \mathfrak{r}=f(\mathfrak{d})\text{ for some } \mathfrak{d} \in \mfD \} \label{def:image}
\end{align}
denote the image of $f$. A function $f \in L^\infty(\Rp \times \R^n)$ is said to be in  $C_{loc}(\bRp;L^\infty_{w*}(\R^n))$, if for any test function $\phi \in L^1(\R^n)$ and any compact subset $J \subset \bRp$,
\begin{equation}\label{eq:XTopo}
\int_{\R^n}f\phi dx \to \int_{\R^n}f\phi dx \quad  \text{uniformly on } J.
\end{equation} Denote by
\[
\mcL(\mfD) =\{f| f\in L^1(\mfD), \int_\mfD f=0\}.
\]
 Let $\tr(\mb{A})$  denote the trace of the matrix $\mb{A}$. Two matrices $\mb{A}$ and $\mb{B}$ are said to be $\mb{A}\leq \mb{B}$ (or $\mb{A}< \mb{B}$) if for any $\xi\in S^{n-1}$, one has
\begin{equation*}
\xi^T\mb{A}\xi\leq \xi^T\mb{B}\xi \quad (\text{or}\,\,\xi^T\mb{A}\xi< \xi^T\mb{B}\xi).
\end{equation*}
Suppose that $A$, $B\in Y$ which is a linear space, then  $[A, B]$ is called the line segment connecting $A$ and $B$, i.e.,
\[
[A, B]=\{\theta A+(1-\theta)B|\theta \in [0, 1]\}.
\]
Suppose $M,N$ are two subsets in a vector space $Y$. Define
\[ M+N = \{y \in Y  : y = y' + y'', y' \in M, y'' \in N. \} \]
Given a space-time set $\mcD\subset \R^{n+1}$, then $\mcD_t =\{x| (x, t)\in \mcD\}$ is the projection of $\mcD$ on $\R^n$ and $\mcD^{t}=\mcD\setminus (\overline{\mcD_t}\times \{t\})$.
Let $\bar z=(\bar x,\bar t)$ be a point in  $\R^{n+1}$ and denote by $\msQ_r(\bar x)$ and $Q_r(\bar{z})= Q_r(\bar{x},\bar t)$ the open space cube and space-time cube, respectively, i.e.
\begin{align}\label{def:msQ}
\msQ_r(\bar{x})=\{x \in  \R^n: |x_i-\bar{x}_i|<\frac{r}{2}, i=1,\cdots,n\}\,\,
\end{align}
and
\begin{equation}\label{def:Q}
Q_r(\bar{z})=\{z \in \R \times \R^n: |t-\bar{t}| < \frac{r}{2}, |x_i-\bar{x}_i|<\frac{r}{2},i=1,\cdots,n\}.
\end{equation}
For simplicity, denote $Q_r=Q_r(0)$. Denote by  $\mcH^k$ the $k$-dimensional Hausdorff measure. Given a set $\mcO \subset \R^k$, we also use $|\mcO|$ to denote $\mathcal{H}^k(\mcO)$ when there is no ambiguity.

\section{Subsolutions}\label{secsub}
In this section, the system \eqref{eq:CES} is reformulated as a differential inclusion in the spirit of De Lellis and Sz{\'e}kelyhidi \cite{dLSz2,Chiodaroli}. Then subsolutions as relaxed inclusions  are introduced. However, the formulation here allows for more general constrain sets compared with \cite{dLSz2,Chiodaroli}, which will be employed to obtain finite-states weak solutions. Our formulation is inspired by \cite{CGSW14,Choffrut,Sz1}.

Given $\rho>0$ and $q\geq 0$, set  
\[ K_{\rho,q}=\{(\m,\U)\in \R^n \times \S_0^{n \times n}:\frac{\m \otimes \m}{\rho}-\U = qI\}.\]
 By taking traces, it is easy to see that
\begin{equation}
K_{\rho,q}=\{(\m,\U)\in \R^n \times \S_0^{n \times n}:|\m|^2 = n\rho q, \U=\frac{\m \otimes \m}{\rho}-qI\}. \label{eq:eqDefK}
\end{equation} 
Therefore, $K_{\rho,q}$ is bounded in $\R^n \times \S_0^{n \times n}$. We denote 
\begin{equation}
 \mathfrak{C}(\rho,q) = \sup_{w \in \text{ conv }K_{\rho,q}} |w| < \infty. \label{ineq:Kbdd}
\end{equation}

 Observe that a bounded weak solution $(\rho,\mb{m})$ to the system \eqref{eq:CES} is equivalent to a bounded quadruple $(\rho,\mb{m},\mb{U},q)$  satisfying the differential equations
 \begin{align}
 &\ptl_t \rho + \nabla \cdot \m  = 0, \label{eq:subsolutonPDE1}\\
 &\partial_t \m + \nabla \cdot \U + \nabla\left(p(\rho)+q\right) = \mb{B}\m, \label{eq:subsolutonPDE2}
 \end{align}
 in the sense of distribution and the constrains
\begin{equation}
(\m,\U)(x,t) \in K_{\rho(x,t),q(x,t)} \quad a.e. 
\end{equation}
The subsolutions are defined by relaxing the constrains as follows.
\begin{Definition}\label{def:subsolu}
	The quadruple $(\rho,\m, \U, q) \in L^\infty(\R^n \times \bRp;\Rp \times \R^n \times \S_0^{n\times n} \times \bRp)$ is called a subsolution of the system (\ref{eq:CES}) if it solves \eqref{eq:subsolutonPDE1} and \eqref{eq:subsolutonPDE2} in the sense of distribution  and satisfies the relaxed constrains 
	\begin{align}
	(\mb{m},\mb{U})(x,t) \in \text{ conv } K_{\rho(x,t),q(x,t)} \quad  a.e.\,\, \text{in}\, \, \R^n\times \bRp. 
	\end{align}
	Furthermore, 	given   a family of compact sets $K_{(x,t)} \subset K_{\rho(x,t),q(x,t)}$ such that the map $(x,t) \mapsto K_{(x,t)}$ is  continuous in  a space-time open set $\mcD$ in the Hausdorff distance (\cite[Definition 7.3.1.]{BBI}), a subsolution $(\rho,\m,\U,q)$ is said to be strict in $\mathcal{D}$ with constrain sets $K_{(x,t)}$ 
	 if \[ (\rho,\m,\U,q) \in C_{loc}(\bRp;L^\infty_{w*}(R^n)), \quad (\rho,\m,\U,q)\big|_\mcD \in C^0(\mcD),\] 
	 and
	\begin{equation}
	(\mb{m},\mb{U})(x,t) \in \intco K_{(x,t)} \text{ in } \mathcal{D}, \quad \rho > 0 \text{ and } q > 0 \text{ in } \mcD. \label{ineq:ss}
	\end{equation}
\end{Definition}
Notice that the strict subsolutions defined in \cite{dLSz2,Chiodaroli} correspond to the cases that the constrain sets $K_{(x,t)}=K_{\rho(x,t),q(x,t)}$ in Definition \ref{def:subsolu}.   

The following lemma, appeared in \cite{dLSz2}, plays an important role in constructing strict subsolutions, and implies in particular that  $(0,0) \in \intco K_{\brho,\bq}$, for $\brho,\bq>0$. 
\begin{Lemma}\label{Lemma:StateSpace}
	The convex hull of $K_{\brho, \bq}$ is characterized as:
	\begin{align}
	\text{ conv }K_{\brho,\bq}= \{(\bbn,\bV) \in \R^n\times \S_0^{n\times n}: \bbn \otimes \bbn - \brho \bV \leq \brho \bq \mb{I} \}. \label{eq:coK}
	\end{align}
	Consequently, 
	\begin{align}
	\intco K_{\brho,\bq}= \{(\bbn,\bV) \in \R^n\times \S_0^{n\times n}: \bbn \otimes \bbn - \brho \bV < \brho \bq \mb{I} \}.\label{ineq:SubsolutionNonstrict}
	\end{align}
\end{Lemma}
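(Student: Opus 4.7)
The plan is to establish \eqref{eq:coK} by double inclusion, and then deduce \eqref{ineq:SubsolutionNonstrict} from a perturbation analysis. Throughout, denote by $C$ the right-hand side of \eqref{eq:coK}, and for $(\bbn_0,\bV_0)\in C$ set $A_0 := \brho(\bq\mb{I} + \bV_0) - \bbn_0\otimes\bbn_0$, which is symmetric and positive semidefinite by the defining inequality of $C$.

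For the easy inclusion $\text{conv}\,K_{\brho,\bq}\subseteq C$, any $(\m,\U)\in K_{\brho,\bq}$ satisfies $\m\otimes\m-\brho\U=\brho\bq\mb{I}$ by \eqref{eq:eqDefK}, so $K_{\brho,\bq}\subset C$. The set $C$ is convex because its defining matrix inequality is equivalent to the scalar inequalities $(\xi\cdot\bbn)^2 - \brho\,\xi^T\bV\xi \leq \brho\bq$ for every $\xi\in S^{n-1}$, each of which is convex in $(\bbn,\bV)$, and $C$ is the intersection of these convex sets.

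The reverse inclusion $C\subseteq\text{conv}\,K_{\brho,\bq}$ is the main content. My plan is to identify the extreme points of the compact convex set $C$ with $K_{\brho,\bq}$, and then invoke the Krein--Milman (or Minkowski) theorem in finite dimensions. Compactness of $C$ follows by noting that taking the trace of the defining inequality gives $|\bbn|^2\leq n\brho\bq$, while $\brho\bV \geq \bbn\otimes\bbn-\brho\bq\mb{I}$ together with $\tr\bV=0$ confines every eigenvalue of $\bV$ to a bounded interval. Next, $A_0=0$ is equivalent to $(\bbn_0,\bV_0)\in K_{\brho,\bq}$: taking the trace forces $|\bbn_0|^2 = n\brho\bq$, after which $\bV_0 = \bbn_0\otimes\bbn_0/\brho-\bq\mb{I}$ is exactly \eqref{eq:eqDefK}. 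When $A_0 = 0$, expanding any midpoint decomposition $(\bbn_0,\bV_0)=\tfrac{1}{2}((\bbn_1,\bV_1)+(\bbn_2,\bV_2))$ with $(\bbn_i,\bV_i)\in C$ yields
\begin{equation*}
A_0 \;=\; \tfrac{1}{2}(A_1+A_2) + \tfrac{1}{4}(\bbn_1-\bbn_2)\otimes(\bbn_1-\bbn_2),
\end{equation*}
and the vanishing of the left side together with the positive semidefiniteness of the two summands on the right forces $A_1=A_2=0$ and $\bbn_1=\bbn_2$, so the endpoints coincide and $(\bbn_0,\bV_0)$ is extreme. Conversely, if $A_0\neq 0$, I produce a non-trivial decomposition in two cases: when $A_0$ is strictly positive definite, the perturbations $(\bbn_0\pm tw,\bV_0)$ remain in $C$ for any $w\neq 0$ and small $t>0$; when $A_0$ is singular but non-zero, I pick a non-zero trace-free symmetric $W$ whose restriction to $\ker A_0$ vanishes, so that $A_0\pm t\brho W\geq 0$ for small $t$, and the perturbation $(\bbn_0,\bV_0\pm tW)$ remains in $C$.

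The interior identity \eqref{ineq:SubsolutionNonstrict} follows from the same perturbation analysis: a point of $C$ is interior exactly when all sufficiently small perturbations of both $\bbn$ and $\bV$ preserve the defining inequality, which by expansion and matching orders in $t$ is equivalent to $A_0>0$. The main technical obstacle I anticipate is the singular case of the extreme-point argument, where producing a non-zero trace-free symmetric $W$ that vanishes on $\ker A_0$ requires a careful block decomposition with respect to $\ker A_0\oplus(\ker A_0)^\perp$ and a dimension count verifying that the trace-free constraint does not exhaust all available degrees of freedom whenever $A_0\neq 0$.
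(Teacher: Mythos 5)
Your proof takes a genuinely different and more self-contained route than the paper's: the paper disposes of Lemma 3 in one line by citing the corresponding fact for $K_{1,1/n}$ from De Lellis--Sz\'{e}kelyhidi and then pulling back under the linear change of variables \eqref{eq:LT}, whereas you reprove the reverse inclusion from scratch by identifying the extreme points of the right-hand set $C$ with $K_{\brho,\bq}$ and invoking Minkowski's theorem. The easy inclusion, the compactness argument, the characterization $A_0=0 \Leftrightarrow (\bbn_0,\bV_0)\in K_{\brho,\bq}$, the midpoint identity $A_0 = \tfrac12(A_1+A_2)+\tfrac14(\bbn_1-\bbn_2)\otimes(\bbn_1-\bbn_2)$ showing that $A_0=0$ forces $(\bbn_1,\bV_1)=(\bbn_2,\bV_2)$, and the interior characterization via $A_0>0$ are all correct.

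However, there is a genuine gap in the converse direction of your extreme-point argument, and it occurs precisely where you flagged the obstacle. In the singular case your perturbation is $(\bbn_0,\bV_0\pm tW)$ with $W\in\S_0^{n\times n}$ vanishing on $\ker A_0$. For $A_0\pm t\brho W\geq 0$ with the $\ker A_0$ block of $A_0$ already zero, positive semidefiniteness forces both the $\ker A_0\times\ker A_0$ block \emph{and} the $\ker A_0\times(\ker A_0)^\perp$ block of $W$ to vanish, so $W$ must live entirely on $(\ker A_0)^\perp$. The trace-free condition then becomes $\tr(W|_{(\ker A_0)^\perp})=0$. When $\mathrm{rank}\,A_0 = 1$, the space $(\ker A_0)^\perp$ is one-dimensional and the only trace-free symmetric form on it is zero; the dimension count you proposed at the end in fact fails, and there is no admissible nonzero $W$. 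This case is not vacuous: for $n=2$, $\brho=\bq=1$, $\bbn_0=e_1$, $\bV_0=0$ one has $A_0=e_2\otimes e_2$ of rank $1$, and $(\bbn_0,\bV_0)\in C\setminus K_{1,1}$. To decompose such a point you must perturb $\bbn$ as well, and if $\bbn_0$ has a component along $\ker A_0$ the two perturbations must be coupled: taking $w\in(\ker A_0)^\perp$ and $W$ with $W_{11}=0$, $W_{12}=\brho^{-1}\bbn_0'\otimes w$ (where $\bbn_0'$ is the projection of $\bbn_0$ onto $\ker A_0$) and $W_{22}$ trace-free, the first-order term $S=\brho W-(w\otimes\bbn_0+\bbn_0\otimes w)$ is then supported on $(\ker A_0)^\perp$ and $A_0\pm tS-t^2w\otimes w\geq 0$ for small $t$. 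With this unified perturbation in both $\bbn$ and $\bV$ the argument closes for every $A_0\neq 0$, and the remaining steps of the proof stand.
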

\begin{proof}
	In the case of $K_{1,1/n}$, the characterization \eqref{eq:coK} is obtained in \cite{dLSz2}. The general case follows from the linear transform 
	\begin{equation}\label{eq:LT}
	(\m,\U) \mapsto \left(\frac{\m}{\sqrt{n\rho q}},\frac{1}{nq}\U
	\right), \quad \text{ for } (\m,\U) \in \R^n\times \S_0^{n\times n},
	\end{equation}
	which maps $K_{\rho,q}$ one-to-one to $K_{1,1/n}$, and preserves convex hulls.
%
\end{proof}

In the rest of this section, we consider convex sets with finite extreme points, which will be employed for constructing finite-states weak solutions.
First, the following elementary result holds.
\begin{Lemma}\label{Lemma:FSC}
	Suppose that $K$ is a compact set in $\R^s$. 
	Let $\mcC$ be a compact set such that $\mcC \subset \intco K$. Then there exists a finite set $\tilde{K}$ of $K$ such that $\mcC \subset \intco \tilde{K}$.
\end{Lemma}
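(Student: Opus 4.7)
The plan is to reduce to a pointwise statement via compactness and then apply Carath\'eodory's theorem. First I would show that every point $c \in \mcC$ lies in the interior of the convex hull of some finite subset of $K$. Since $c \in \intco K$ and the interior is open, there exists $\epsilon_c>0$ with $B(c,\epsilon_c)\subset \text{conv}\,K$. In particular, each of the $2s$ points $c\pm\epsilon_c e_i$ ($i=1,\ldots,s$), the vertices of a cross-polytope centered at $c$, lies in $\text{conv}\,K$. Carath\'eodory's theorem then expresses each such vertex as a convex combination of at most $s+1$ points of $K$; collecting all these points yields a finite set $K_c \subset K$ with $|K_c|\le 2s(s+1)$.

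Next, let $P_c=\text{conv}\{c\pm\epsilon_c e_i:i=1,\ldots,s\}$. Every vertex of $P_c$ lies in $\text{conv}\,K_c$, hence $P_c\subset \text{conv}\,K_c$, while $c$ lies in the (ordinary) interior of the cross-polytope $P_c$. Therefore $c \in \text{int}\,P_c \subset \intco K_c$, and there is an open neighborhood $U_c$ of $c$ contained in $\intco K_c$.

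Finally, by compactness of $\mcC$, I extract a finite subcover $U_{c_1},\ldots,U_{c_N}$ of $\mcC$ and set $\tilde K=\bigcup_{j=1}^N K_{c_j}$, which is a finite subset of $K$. For any $c \in \mcC$ one has $c\in U_{c_j}\subset \intco K_{c_j}$ for some $j$. Since $K_{c_j}\subset \tilde K$ gives $\text{conv}\,K_{c_j}\subset \text{conv}\,\tilde K$, and $\intco K_{c_j}$ is an open set lying in $\text{conv}\,\tilde K$, it must be contained in $\intco \tilde K$. This yields $\mcC\subset \intco \tilde K$, as desired.

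I do not foresee a serious obstacle: the argument is elementary and uses only Carath\'eodory's theorem together with compactness. The only minor point requiring care is recognizing that any open set contained in $\text{conv}\,\tilde K$ is automatically contained in its interior, which is what lets us pass from the local sets $\intco K_{c_j}$ to the global $\intco \tilde K$.
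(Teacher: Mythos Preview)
Your proof is correct and follows essentially the same approach as the paper's: for each point of $\mcC$ find a full-dimensional polytope containing it in its interior with vertices in $\text{conv}\,K$, use Carath\'eodory to write each vertex over finitely many points of $K$, then cover $\mcC$ by finitely many such interiors and take the union of the local finite sets. The only cosmetic differences are that the paper uses simplices rather than cross-polytopes, and it phrases Carath\'eodory via extreme points of $\text{conv}\,K$ (which lie in $K$ by compactness) rather than points of $K$ directly.
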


\begin{proof}
	 For $w \in \mcC$, there exists  a simplex $\mathcal{S}$  such that $w \in \text{int }\mathcal{S}$ and $\mathcal{S} \subset \intco K$. Let $\{\mathcal{S}_i\}$ be a finite sequence of simplexes which cover the compact set $\mcC$.  It follows from Carath\'{e}odory's theorem  that   each vertex $\mathcal{W}_{i,j}$ of $\mathcal{S}_i$ can be represented as convex combinations of at most $s+1$ extreme points $\{\mcV_{i,j,k}\}_{k=1}^{s+1}$ of $\text{ conv } K$. Since $K$ is compact, the extreme points of $\text{ conv } K$ must belong to $K$.  Let
	$\tilde{K}$ be the collection of these extreme points $\{\mcV_{i,j,k}\}$. Clearly that $ \mcC \subset \cup_i \text{int } \mathcal{S}_i \subset \intco \tilde{K}$. 
\end{proof}
In the case that  $\mcC$ consists of a single point, the following sharp result holds. 
\begin{Lemma}\label{Lemma:FiniteValue}
	Let $\bar{w} \in \intco K_{\brho, \bq}$. Then there exists a finite set of $N_n^*$ points $\mathcal{K} \subset  K_{\brho, \bq}$   such that $\bar{w} \in \intco \mathcal{K}$, where $N_n^*=N_n+1=\frac{n(n+3)}{2}$ as defined in \eqref{def:N_n}.
\end{Lemma}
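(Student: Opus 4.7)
Plan. The strategy is to combine Carath\'eodory's theorem with a perturbation argument that exploits the fact that $K_{\brho,\bq}$ is a smooth connected submanifold of $\R^n\times \S_0^{n\times n}$ whose affine hull is the whole ambient space. First I would reduce to the normalized setting via the linear transformation in \eqref{eq:LT}: it suffices to prove the result for $\brho=1,\,\bq=1/n$, in which case $K_{1,1/n}$ is the image of the unit sphere $S^{n-1}$ under the smooth embedding $\hat{\m}\mapsto (\hat{\m},\hat{\m}\otimes\hat{\m}-\tfrac{1}{n}I)$.

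Next, I apply Carath\'eodory's theorem in $\R^n\times \S_0^{n\times n}$, which has dimension $N_n$: since $\bar{w}\in\intco K_{\brho,\bq}$ lies in the convex hull, there is a representation
\[
\bar{w}=\sum_{i=1}^{k}\lambda_i w_i,\qquad w_i\in K_{\brho,\bq},\ \lambda_i>0,\ \sum_{i=1}^{k}\lambda_i=1,
\]
with $\{w_i\}$ affinely independent and $k\le N_n+1=N_n^*$. If $k=N_n^*$, then $\mathcal{K}=\{w_1,\dots,w_{N_n^*}\}$ consists of $N_n+1$ affinely independent points, so $\operatorname{conv}\mathcal{K}$ is a non-degenerate $N_n$-simplex; the strict convex combination then places $\bar{w}$ in its interior, and we are done.

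The main case is $k<N_n^*$, which I handle by an inductive augmentation that increases the number of affinely independent points by one at each step. Let $V=\mathrm{aff.\ hull}\{w_i\}$, of dimension $k-1<N_n$. By Lemma~\ref{Lemma:StateSpace} the set $\operatorname{conv} K_{\brho,\bq}$ has non-empty interior in $\R^n\times \S_0^{n\times n}$, hence the affine hull of $K_{\brho,\bq}$ is the full ambient space and I can pick $w^*\in K_{\brho,\bq}\setminus V$. For small $\eps>0$ the perturbed point $\bar{w}_\eps=(1+\eps)\bar{w}-\eps w^*$ lies in $\intco K_{\brho,\bq}$ and, crucially, outside $V$; applying Carath\'eodory to $\bar{w}_\eps$ and substituting back via
\[
\bar{w}=\tfrac{1}{1+\eps}\bar{w}_\eps+\tfrac{\eps}{1+\eps}w^*
\]
yields a strict convex combination of $w^*$ together with a new collection of affinely independent points of $K_{\brho,\bq}$, at least one of which must lie outside $V$ (since $\bar{w}_\eps\notin V$). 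For a generic choice of $w^*$ and $\eps$, this representation uses strictly more affinely independent points than before. Iterating at most $N_n^*-k$ times produces $N_n^*$ affinely independent points whose convex hull is a non-degenerate $N_n$-simplex containing $\bar{w}$ in its interior.

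The main obstacle is the augmentation step: one must verify that the perturbation genuinely introduces a new direction in the affine hull rather than merely shuffling within $V$. This is where the smooth manifold structure of $K_{\brho,\bq}$ is essential (contrast with a finite set such as four vertices of a square, where the $N_n+1$ bound can fail); the openness of $\intco K_{\brho,\bq}$ and the fact that $K_{\brho,\bq}$ has positive-dimensional tangent spaces at every point give enough perturbation freedom to realize a Carath\'eodory decomposition of $\bar{w}_\eps$ that is affinely compatible with $w^*$. This flexibility can equivalently be viewed through the moment-matching interpretation: the coordinates of $\bar{w}$ encode the first and second moments of a probability measure on $S^{n-1}$, and strict positive definiteness of the covariance (the condition for $\bar{w}\in\intco K_{\brho,\bq}$) is exactly what guarantees the existence of a discrete representing measure with $N_n^*$ atoms in general position.
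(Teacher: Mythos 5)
Your plan correctly identifies the two essential ingredients the paper also uses — Carath\'eodory's theorem and a perturbation exploiting that $K_{\brho,\bq}$ is a smooth hypersurface whose affine hull is the whole ambient space (the analogue of the paper's Lemma~\ref{Lemma:DimensionCounting}) — and you are right that the statement fails for a general compact set. However, the augmentation step, which is the entire content of the lemma beyond Carath\'eodory, has a genuine gap. When you apply Carath\'eodory to $\bar{w}_\eps=(1+\eps)\bar{w}-\eps w^*$ you obtain some minimal representation $\bar{w}_\eps=\sum_{j=1}^{k'}\mu_j v_j$, but you have no control over the new points $v_j$: (i) the number $k'$ can be $N_n^*$, in which case the substitution gives a representation of $\bar{w}$ with $N_n^*+1$ points, one too many; (ii) $k'$ can also be very small (e.g.\ $\bar{w}_\eps$ could be a midpoint of two points of $K_{\brho,\bq}$), in which case the affine dimension of $\{v_j\}\cup\{w^*\}$ may actually be \emph{smaller} than that of the old $\{w_i\}$; and (iii) nothing forces the new affine hull to contain the old $V$, so you cannot conclude the dimension increases monotonically. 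The sentence ``for a generic choice of $w^*$ and $\eps$, this representation uses strictly more affinely independent points'' asserts the conclusion without an argument, and genericity in $\eps$ does not help because the Carath\'eodory decomposition of $\bar{w}_\eps$ is not a continuous (or even well-defined) function of $\bar{w}_\eps$.

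The paper sidesteps exactly this difficulty by first invoking Lemma~\ref{Lemma:FSC} to reduce to a \emph{fixed} finite subset $F\subset K_{\brho,\bq}$ with $\bar{w}\in\intco F$, and then perturbing the vertices of $F$ inside $K_{\brho,\bq}$ so that every $N_n$-element subset $G$ of the perturbed set has $\{v-\bar{w}:v\in G\}$ linearly independent. Because $F$ is finite there are only ${N\choose N_n}$ such subsets, so the perturbation terminates; Lemma~\ref{Lemma:DimensionCounting} supplies the needed freedom to perturb off any hyperplane, and a continuity argument preserves both $\bar{w}\in\intco F$ and the already-achieved non-degeneracies. With every $N_n$-subset non-degenerate, the $N_n^*$-set produced by Carath\'eodory automatically has $\bar{w}$ in its interior. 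If you want to pursue your augmentation route, you would need a replacement for the Carath\'eodory step that retains the old points and only modifies one of them — something that, once made precise, essentially reproduces the paper's ``perturb a degenerate $N_n$-subset'' argument — together with the finite-set reduction to make the induction terminate.
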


\begin{proof}
	It follows from Lemma \ref{Lemma:FSC} that there exists a finite set $F=\{w_i \}_{i = 1}^N \subset K_{\brho, \bq}$ such that $ \bar{w} \in \intco F$. Using Carath\'{e}odory's theorem yields that there is a subset $\{w_{i_j}\}_{j=1}^{N_n^*}$ with $\bar{w} \in \text{conv } \{w_{i_j}\}_{j=1}^{N_n^*}$. However, it may happen that $\bar{w}  \notin \intco \{w_{i_j}\}_{j=1}^{N_n^*}$. The strategy is to avoid the degenerate case by perturbing the vertices  slightly.  An $N_n$-set $G=\{v_j\}_{j=1}^{N_n} \subset K_{\brho, \bq}$ is said to be non-degenerate if 		
	\begin{equation}\label{eq:LI}
	\{v_j-\bar{w}\}_{j=1}^{N_n} \text{ is a set of linearly independent vectors in }  \R^n \times \S_0^{n \times n}.
	\end{equation}
	 The number of distinct $N_n$-subsets of $F$ is $J={N \choose N_n}=\frac{N!}{N_n!(N-N_n)!}$.  Suppose that for some $k$ with  $0 \leq k < J$, the set $F_k=\{w_i^{(k)}\}_{i = 1}^N\subset K_{\brho, \bq}$ has been obtained with the following two properties: 
	\begin{enumerate}
		\item there are at least $k$ non-degenerate $N_n$-subsets of $F_k$;
		\item it holds that $\bar{w} \in \intco F_{k}$.
	\end{enumerate}
	For $k=0$, the set $F_0 = F$ clearly satisfies the above two properties. Suppose that we have obtained $F_0, \cdots, F_k$ which satisfy the above two properties. Let $G_1, \cdots, G_l$ be all the non-degenerate $N_n$-subsets of $F_k$. Set $F_{l}=F_k$ if $l>k$. Otherwise the number of non-degenerate $N_n$-subsets of $F_k$ is exactly $k$. Since $k< {N \choose N_n}$, there is a degenerate $N_n$-subset $G=\{v_i\}_{i=1}^{N_n}$   of $F_k$, i.e. the vectors $\{v_i-\bar{w}\}_{i=1}^{N_n}$ are linearly dependent. Consider a perturbation $\tilde{G}=\{\tilde{v}_i\}_{i=1}^{N_n} \subset  K_{\brho, \bq}$ of $G$. It follows from  a continuity argument that there exists a $\delta_0>0$ such that  $\bar{w} \in \intco ((F_k \setminus G)\cup \tilde{G})$ and that the $N_n$-subsets $\tilde{G}_j = (G_j \setminus G) \cup \tilde{G}$ are non-degenerate for $1 \leq j \leq k$, whenever 
	\begin{equation}
	\max_{i=1}^{N_n}|\tilde{v}_i-v_i|<\delta_0. \label{eq:v_i}
	\end{equation} 
	In view of Lemma \ref{Lemma:DimensionCounting} below, it is not hard to see that there exists a  set $\tilde{G}$  which  is non-degenerate  and satisfies \eqref{eq:v_i}.  Set $F_{k+1}=(F_k \setminus G)\cup \tilde{G}$. It is easy to see that $F_{k+1} \subset  K_{\brho, \bq}$ and satisfies the two properties for $k+1$. After at most ${N \choose N_n}$ steps, we obtain $\tilde{F}=F_{J}$ such that $\bar{w} \in \intco \tilde{F}$ and all the $N_n$-subsets of $\tilde{F}$ are non-degenerate.

	It follows from Carath\'{e}odory's theorem that there exists a $N_n^*$-subset $\mathcal{K}$ of $\tilde{F}$ such that $\bar{w} \in \text{conv } \mathcal{K}$. Furthermore, since any  $N_n$-subsets of $\tilde{F}$ is non-degenerate, one has $\bar{w} \in \intco \mathcal{K}$.
\end{proof}
%
\begin{Lemma}\label{Lemma:DimensionCounting}
	Suppose $\bar{\rho}>0$ and $\bar{q}>0$. Given $w_0 \in K_{\bar{\rho},\bar{q}}$,  for any given $\delta>0$, the set \[\mcN_{\delta}(w_0):=\{ w \in K_{\bar{\rho},\bar{q}}: |w-w_0|< \delta\}\] does not lie in any hyperplane $\mcP \subset \R^n \times \S_0^{n\times n}$.
	
\end{Lemma}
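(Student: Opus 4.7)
The plan is to reduce to a canonical normalization of $K_{\bar\rho,\bar q}$ and then use the full sphere's symmetry to contradict the hyperplane condition. First I would invoke the linear bijection in \eqref{eq:LT}, $(\m,\U)\mapsto (\m/\sqrt{n\bar\rho\bar q},\U/(n\bar q))$; being invertible and linear, it preserves affine hyperplanes and maps $K_{\bar\rho,\bar q}$ onto $K_{1,1/n}$. So without loss of generality $\bar\rho=1$ and $\bar q=1/n$, and by \eqref{eq:eqDefK} the set $K_{1,1/n}$ is the image of
\[
\phi:S^{n-1}\to\R^n\times\S_0^{n\times n},\qquad \phi(\m)=\bigl(\m,\m\otimes\m-\tfrac{1}{n}\mb{I}\bigr),
\]
which is a homeomorphism onto $K_{1,1/n}$ since the first coordinate recovers $\m$. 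For every $\delta>0$ the set $U:=\phi^{-1}(\mcN_\delta(w_0))$ is therefore an open neighborhood of $\m_0$ in $S^{n-1}$.

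Next, I would argue by contradiction. Assume $\mcN_\delta(w_0)\subset\mcP$ for an affine hyperplane $\mcP=\{(\m,\U):\a\cdot\m+\mb{A}:\U=c\}$ with $(\a,\mb{A})\in\R^n\times\S_0^{n\times n}\setminus\{0\}$ and $c\in\R$. Composing the defining relation with $\phi$ and using $\tr\mb{A}=0$, this becomes
\[
f(\m):=\a\cdot\m+\mb{A}\m\cdot\m=c\quad\text{for every }\m\in U.
\]
The function $f$ is the restriction to $S^{n-1}$ of a polynomial on $\R^n$, hence real-analytic. Since it agrees with the constant $c$ on the open set $U$ of the connected real-analytic manifold $S^{n-1}$ (connected because $n\ge 2$), the identity theorem forces $f\equiv c$ on all of $S^{n-1}$.

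From this global identity I would extract the coefficients. Antipodal symmetry gives $f(\m)-f(-\m)=2\a\cdot\m=0$ for every $\m\in S^{n-1}$, hence $\a=0$. The remaining relation $\mb{A}\m\cdot\m=c$ for all unit $\m$, together with symmetry of $\mb{A}$ and polarization, yields $\mb{A}\xi\cdot\eta=c\,\xi\cdot\eta$ for every $\xi,\eta\in\R^n$, so $\mb{A}=c\mb{I}$. Finally $\tr\mb{A}=nc=0$ forces $c=0$ and thus $\mb{A}=0$, contradicting $(\a,\mb{A})\neq 0$.

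I do not anticipate any serious obstacle: the argument rests on the elementary fact that a polynomial constant on an open subset of a connected real-analytic manifold is constant everywhere, followed by a routine polarization and trace computation. The only point requiring care is the connectedness of $S^{n-1}$, which presumes $n\ge 2$; this is consistent with the ambient setting of the paper, and for $n=1$ the ambient vector space $\R^n\times\S_0^{n\times n}$ is one-dimensional so the statement is degenerate.
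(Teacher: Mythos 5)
Your proof is correct (for $n\ge 2$, which is the case that matters; you rightly flag that the statement degenerates for $n=1$) but takes a genuinely different and arguably cleaner route than the paper's. The paper works with the quantity $\mathfrak{A}(w)$, the infimum of the affine dimensions of the local neighborhoods $\mcN_\delta(w)$, uses the $SO(n)$ action $\msR(\m,\U)=(\mb{R}\m,\mb{R}\U\mb{R}^t)$ to show that $\mathfrak{A}$ is constant on $K_{1,1/n}$, and then runs a geometric contradiction argument: assuming $\mathfrak{A}\equiv N_0<N_n$, it examines the maximal neighborhood of a point lying in an affine subspace $L_0$, finds a boundary point $w_1$ with its own affine subspace $L_1\ne L_0$, and derives a contradiction from $\dim(L_0\cap L_1)<N_0$. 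Your approach bypasses all of this by exploiting the explicit parametrization $\phi:S^{n-1}\to K_{1,1/n}$, $\phi(\m)=(\m,\m\otimes\m-\tfrac1n\mb{I})$: the hyperplane condition becomes the polynomial identity $\a\cdot\m+\mb{A}\m\cdot\m=c$ on an open subset of $S^{n-1}$, which propagates to all of $S^{n-1}$ by the identity theorem for real-analytic functions on the connected manifold $S^{n-1}$, and then antipodal symmetry and polarization (together with $\tr\mb{A}=0$) force $(\a,\mb{A})=0$. What the paper's argument buys is that it does not rely on the global structure of $K_{1,1/n}$ as a sphere beyond its symmetry group; what your argument buys is concreteness and brevity, reducing the lemma to the fact that a quadratic polynomial constant on an open cap of $S^{n-1}$ must be constant on the whole sphere. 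Both proofs use the same normalization \eqref{eq:LT} to reduce to $K_{1,1/n}$.
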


\begin{proof}
	First consider the case that $\bar{\rho}=1,\bar{q}=1/n$, i.e. $ K_{\bar{\rho},\bar{q}}=K_{1,1/n}$. For a subset $M \subset \R^n \times \S_0^{n \times n}$, let $\text{affdim}(M)$ denote the dimension of its affine hull in $\R^n \times \S_0^{n \times n}$, i.e. the intersection of all affine sets containing $M$. For $w \in K_{1,1/n}$, let 
	\[ \mathfrak{A}(w) = \inf_{\delta>0}(\text{affdim}(\mcN_{\delta}(w))).\]
	It is easy to see  that $K_{1,1/n}$ is diffeomorphic to $S^{n-1}$ via the map 
	\[  \xi \mapsto  (\xi,\xi \otimes \xi - \frac{1}{n}\mb{I}) \in K_{1,1/n} \quad \text{ for } \xi \in S^{n-1}.\]
	Let $T_{w}(K_{1,1/n})$ denote the tangent space of $K_{1,1/n}$ at $w$.  It is easy to see that 
	\[ n-1=\dim(T_{w}(K_{1,1/n}))\leq \mathfrak{A}(w) \leq N_n, \quad \text{ for } w \in K_{1,1/n}.\]
	We claim that $\mathfrak{A}(w)$ equals to some positive constant $N_0$ on $K_{1,1/n}$, due to homogeneity. To see this,  consider the action of $SO(n)$ in $\R^n \times \S_0^{n \times n}$ inspired by \cite{Choffrut}. It is easy to see that for any  $\mb{R} \in SO(n)$, the linear transform on $\R^n \times \S_0^{n \times n}$ defined by
	\[ \msR(\m,\U) = (\mb{R}\m,\mb{R}\U \mb{R}^t) \quad \text{ for }(\m,\U)\in \R^n \times \S_0^{n \times n},\]
	preserves affine hulls and transforms $K_{1,1/n}$ into itself. Then it holds that $\mathfrak{A}(\mb{R}w) = \mathfrak{A}(w)$ for $w \in K_{1,1/n}$. Suppose $w_1$ and $w_2$ are any two points of  $K_{1,1/n}$ which are of the forms
	\[ w_i=(\xi_i,\xi_i \otimes \xi_i - \frac{1}{n}\mb{I}), \quad \xi_i \in S^{n-1},i=1,2.\]  
	There exists a rotation $\mb{R} \in SO(n)$ such that $\xi_2 = \mb{R}\xi_1$. It follows that $w_2 = \msR w_1$  and thus $\mathfrak{A}(w_2)=\mathfrak{A}(w_1)$. 
	
	 We now show that  $N_0 = N_n$. Suppose $N_0 < N_n$. Fix some $w_0 \in K_{1,1/n}$. Then there exist a $\delta_0>0$ and an affine subspace $L_0$ of dimension $N_0$ such that $\mcN_{\delta_0}(w_0)\subset L_0$. Recall from  Lemma \ref{Lemma:StateSpace} that $(0,0) \in \intco K_{1,1/n}$, hence the affine dimension of $K_{1,1/n}$ is $N_n$, which is larger than $\dim(L_0)=N_0$. It follows that the number
	\[\bar{\delta} = \sup\{\delta>0: \mcN_{\delta}(w_0)\subset L_0 \}  \]
	is finite.
	From the definition of $\bar{\delta}$ and the compactness of $\ptl \mcN_{\bar{\delta} }(w_0) \subset K_{1,1/n}$, there exists a $w_1 \in \ptl \mcN_{\bar{\delta} }(w_0)$ with the property that $\mcN_{\delta}(w_1)$ is not contained in $L_0$ for any $\delta>0$. Since $\mathfrak{A}(w_1) = N_0$, there exist a $\delta_1>0$ and an affine subspace $L_1$ of dimension $N_0$ such that $\mcN_{\delta_1}(w_1)\subset L_1$. It is easy to see that $L_0 \neq L_1$ and thus $\dim(L_0 \cap L_1)<N_0$. Observe that for $\delta<\bar{\delta}$ which is sufficiently close to $\bar{\delta}$, the intersection $\mcN_{\delta }(w_0)\cap \mcN_{\delta_1}(w_1)$ is a nonempty open set of $K_{1,1/n}$. Therefore, for $w \in \mcN_{\delta }(w_0)\cap \mcN_{\delta_1}(w_1)  \subset L_0 \cap L_1$, it holds that $\mathfrak{A}(w)\leq \dim(L_0 \cap L_1)< N_0$. This contradicts with the fact that $\mathfrak{A}(\cdot)$ is constant on $K_{1,1/n}$. It follows that $N_0 = N_n$ and the case for $K_{1,1/n}$ is proved. 
	
	The general case for $K_{\bar{\rho},\bar{q}}$ follows
	from the linear transform \eqref{eq:LT}
	which maps $K_{\bar{\rho},\bar{q}}$ one-to-one to $K_{1,1/n}$ and preserves hyperplanes. This finishes the proof of the lemma.
\end{proof}


\section{Localized Plane Waves}\label{secplane}
In this section, localized plane waves  are constructed, which form the building blocks for the iteration scheme to obtain weak solutions. 


Define the linear operator
\begin{equation}
\mathscr{L}(\n,\V) =(\nabla \cdot \n, \ptl_t \n + \nabla \cdot \V - \mb{B}\n)^t.
\end{equation}
Observe that if $(\rho,\m,\U,q)$  solves \eqref{eq:subsolutonPDE1} and \eqref{eq:subsolutonPDE2} and $(\n,\V)$ satisfies
\begin{equation}
\mathscr{L}(\n,\V)=0, \label{msL}
\end{equation} 
then $(\rho,\m+\n,\U+\V,q)$ also solves \eqref{eq:subsolutonPDE1} and \eqref{eq:subsolutonPDE2}. The convex integration scheme will employ localized plane waves of $\mathscr{L}$. It needs to be shown  that the wave cone $\Lambda_\mathscr{L}$  defined below (according to \cite{Sz1}) is suitably large.
\begin{Definition}
	The wave cone $\Lambda_\mathscr{L}$ associated with $\mathscr{L}$ is a subset of  $\mathbb{R}^n \times \mathbb{S}_0^{n \times n}$, to which there exists a constant $C>0$ such that for any $(\bar{\mb{n}},\bar{\mb{V}}) \in \Lambda_\mathscr{L}$  there exists a sequence $(\tilde{\n}_k,\tilde{\V}_k) \in C_c^\infty(Q_1;\mathbb{R}^n \times \mathbb{S}_0^{n \times n})$ solving $\mathscr{L}(\tilde{\n}_k,\tilde{\V}_k)=0$  such that
	\begin{itemize}
		\item $\text{dist}((\tilde{\n}_k,\tilde{\V}_k),[-(\bbn,\bV),(\bbn,\bV)]) \to 0$ uniformly in $Q_1$ as $k \to \infty$,
		\item $(\tilde{\n}_k,\tilde{\V}_k) \to 0$ in the sense of distribution as $k \to \infty$,
		\item $\int_{Q_1} |(\tilde{\n}_k,\tilde{\V}_k)|^2(x,t) dx dt  \geq C |(\bbn,\bV)|^2$ for all $k \in \mathbb{N}$.
	\end{itemize}
	The functions $(\tilde{\n}_k,\tilde{\V}_k)$ are called localized plane waves of $\mathscr{L}$.
\end{Definition}
Let  $\mathscr{L}_0$ denote the homogeneous operator corresponding to $\mb{B}=0$ and $\Lambda_{\mathscr{L}_0}$ the associated wave cone. It is proved in \cite{dLSz2} that if $w_1 \neq w_2$, for any $w_i \in K_{\bar{\rho},\bar{q}}$ with $\bar{\rho}>0$ and $\bar{q}>0$,
\begin{equation}\label{eq:diffLambda}
w_1-w_2 \in \Lambda_{\mathscr{L}_0}.
\end{equation}
The proof of \eqref{eq:diffLambda} employs the key property that $w_1$ and $w_2$ is connected by a plane wave solution to $\mathscr{L}_0(\n,\V) = 0$ as follows. 
\begin{Lemma}\label{Lemma:AD}
	Let $\Lambda$ denote the subset of $\R^n \times \S_0^{n \times n}$ such that for $(\bbn,\bV) \in \Lambda$ there exists a $(\tau,\xi)\in \R \times S^{n-1}$ such that  $(\bbn,\bV)h(\tau t + \xi \cdot x )$ is a plane wave solution to $\mathscr{L}_0(\n,\V) = 0$ for any smooth function $h$. Then for $w'$ and $w'' \in K_{\brho,\bq}$ with $w' \neq w''$
	\[ w'-w'' \in \Lambda.\]
\end{Lemma}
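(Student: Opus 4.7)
The first step is to translate membership in $\Lambda$ into an algebraic condition. Applying $\mathscr{L}_0$ to a plane wave $(\n,\V)(x,t)=(\bbn,\bV)h(\tau t+\xi\cdot x)$ yields
\[
\mathscr{L}_0(\n,\V) = \bigl((\bbn\cdot\xi)\,h',\; (\tau\bbn+\bV\xi)\,h'\bigr)^{t},
\]
which vanishes for every smooth $h$ if and only if $\bbn\cdot\xi=0$ and $\tau\bbn+\bV\xi=0$. Thus the lemma reduces to producing $(\tau,\xi)\in \R\times S^{n-1}$ realizing both conditions for $(\bbn,\bV):=(\m'-\m'',\U'-\U'')$, where $w'=(\m',\U')$ and $w''=(\m'',\U'')$ are two distinct points of $K_{\brho,\bq}$.

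Next I would use the explicit form \eqref{eq:eqDefK} of $K_{\brho,\bq}$. Subtracting $\U'=\brho^{-1}\m'\otimes\m'-\bq\mb{I}$ from the analogous identity for $\U''$ and polarizing gives, with $\mb{a}:=\m'+\m''$,
\[
\bV=\U'-\U''=\frac{1}{2\brho}\bigl(\bbn\otimes\mb{a}+\mb{a}\otimes\bbn\bigr).
\]
Moreover, the defining relation $|\m'|^2=|\m''|^2=n\brho\bq$ from \eqref{eq:eqDefK} shows that $\m'=\m''$ would force $\U'=\U''$; since $w'\neq w''$, this ensures $\bbn\neq 0$.

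The last step is to choose $\xi$. Because $n\ge 2$ and $\bbn\neq 0$, the hyperplane $\bbn^{\perp}\subset\R^n$ is nontrivial, and I pick any unit vector $\xi$ in it, so that $\bbn\cdot\xi=0$. Substituting into the formula for $\bV$,
\[
\bV\xi=\frac{1}{2\brho}\bigl(\bbn(\mb{a}\cdot\xi)+\mb{a}(\bbn\cdot\xi)\bigr)=\frac{\mb{a}\cdot\xi}{2\brho}\,\bbn,
\]
so setting $\tau:=-\frac{\mb{a}\cdot\xi}{2\brho}$ yields $\tau\bbn+\bV\xi=0$, and both algebraic conditions hold, whence $(\bbn,\bV)\in\Lambda$.

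There is essentially no serious obstacle here: the argument rests on the algebraic observation that the symmetric rank-two structure of $\bV$ automatically makes $\bV\xi$ collinear with $\bbn$ whenever $\xi\perp\bbn$, so the time-frequency $\tau$ can always be chosen to cancel the momentum-equation term. The only point requiring care is that the constraint $|\m|^2=n\brho\bq$ built into $K_{\brho,\bq}$ is what guarantees $\bbn\neq 0$; without it one could have $w'\neq w''$ with $\m'=\m''$ and there would be no $\xi$ to select in $\bbn^{\perp}\cap S^{n-1}$.
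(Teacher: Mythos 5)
Your proof is correct and takes essentially the same approach as the paper: choose $\xi\perp\bbn$, observe that $\bV\xi$ is then a scalar multiple of $\bbn$, and set $\tau$ to cancel it. The polarization identity $\bV=\frac{1}{2\brho}(\bbn\otimes\mb{a}+\mb{a}\otimes\bbn)$ is a clean cosmetic reorganization of the paper's direct computation $\bV\xi=\frac{1}{\brho}(\m'\otimes\m'-\m''\otimes\m'')\xi=\frac{\m'\cdot\xi}{\brho}\bbn$, and your explicit remarks deriving the algebraic conditions and verifying $\bbn\neq 0$ are useful points that the paper leaves implicit.
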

\begin{proof}
	For $(\bbn',\bV')$, $(\bbn'',\bV'') \in K_{\brho,\bq}$, set $(\bbn,\bV)=(\bbn',\bV')-(\bbn'',\bV'')$. Let $\xi \in S^{n-1}$  satisfy $\xi \cdot \bbn =0$ so that $\xi\cdot \bbn' =\xi\cdot \bbn''$. Denote $\tau = - (1/\brho)\xi \cdot \bbn'$. Then one has
	\begin{align*}\label{eq:AdmissibleDifference}
	\tau  \bbn +  \bV \cdot \xi= \tau  \bbn + \frac{1}{\brho}(\bbn' \otimes \bbn' - \bbn'' \otimes \bbn'')\cdot \xi =  (\tau + \frac{1}{\brho}\xi \cdot \bbn')(\bbn' - \bbn'')=0.
	\end{align*}
	This finishes the proof of the lemma.
\end{proof}
However, when the source $\mb{B}$ is non-zero, for $w_1, w_2 \in K_{\bar{\rho},\bar{q}}$  there may not be plane wave solutions to $\mathscr{L}$ with profiles $w_1 - w_2$. It seems initially unclear whether the property \eqref{eq:diffLambda} is still true for $\Lambda_{\mathscr{L}}$ in the presence of the source terms. Our key observation is that: in the high-frequency regime, localized plane waves  with sources can be constructed as perturbations of the plane waves of $\mathscr{L}_0$. This is stated as follows. 
\begin{Lemma}[Localized plane waves]\label{Lemma:LPW}
	Suppose that $w,w_1,w_2 \in \R^n \times \S_0^{n \times n}$ satisfy
	\[w= \mu_1 w_1 + \mu_2 w_2, \,\,  \mu_1+\mu_2 = 1, \mu_i>0,\quad \text{ and } \quad \bar{w} = w_2 - w_1 \in \Lambda.\] 
	Given an open space-time set $\mcO$ and $\varepsilon>0$,  there exists a $\tilde{w} \in  C_c^\infty(\mcO;\R^n \times \S_0^{n \times n})$ satisfying the following properties
	\begin{enumerate}
		\item $\tilde{w}(\cdot,t) \in \mcL(\R^n)$ and $\tilde{w}$ solves the equation \eqref{msL}, i.e.
		\begin{equation}
		\int \tilde{w}(x,t)dx = 0 \quad \text{ and } \quad  \mathscr{L}\tilde{w}=0; \label{eq:Lemma6-P1}
		\end{equation}
		\item $\text{dist}(w+\tilde{w}(x,t),[w_1,w_2])<\eps \text{ for } (x,t) \in \mcO$;
		\item there exist two disjoint open sets $\mcO_i \subset \mcO$  such that for $i=1,2,$
		\[|w+\tilde{w}(x,t)-w_i| < \eps  \text{ for } (x,t) \in \mcO_i; \quad |\mcH^{n+1}(\mcO_i) - \mu_i\mcH^{n+1}(\mcO)|<\varepsilon. \]  
	\end{enumerate}
\end{Lemma}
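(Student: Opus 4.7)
Since $\bar w = w_2 - w_1 \in \Lambda$, by Lemma \ref{Lemma:AD} there exists $(\tau,\xi)\in \R\times S^{n-1}$ such that $\bar w\,h(\tau t + \xi\cdot x)$ solves the homogeneous system $\mathscr L_0(\n,\V)=0$ for every smooth profile $h$; equivalently, writing $\bar w = (\bbn,\bV)$, one has $\xi\cdot\bbn = 0$ and $\tau\bbn + \bV\xi = 0$. The plan is to use such an oscillatory ansatz, localized by a cutoff inside $\mcO$, as the leading order, and to correct both the cutoff error and the source residual $-\mb{B}\n$ by small high-frequency correctors, exploiting the ``$\mb{B}$ is a lower-order perturbation in the high-frequency regime'' principle announced in the introduction.

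\textbf{Construction.} Noting that $w + \mu_1\bar w = w_2$ and $w - \mu_2\bar w = w_1$, I first pick a smooth $1$-periodic zero-mean function $H:\R\to\R$ that equals $\mu_1$ on an interval of length $\approx \mu_2$, equals $-\mu_2$ on an interval of length $\approx \mu_1$, with smooth transition zones of total length $<\delta$. Choose $\chi\in C_c^\infty(\mcO;[0,1])$ with $\chi\equiv 1$ on an open set $\mcO'\subset\mcO$ satisfying $|\mcO\setminus\mcO'| < \delta|\mcO|$, set $\theta(x,t) = \tau t + \xi\cdot x$, and define the leading order
\begin{equation*}
\tilde w^{(0)}(x,t) = \chi(x,t)\,\bar w\,H(k\theta)
\end{equation*}
for a large parameter $k$. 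Using $\xi\cdot\bbn = 0$ and $\tau\bbn + \bV\xi = 0$, all high-frequency contributions cancel, and a direct computation shows that $\mathscr L\tilde w^{(0)}$ consists of terms in which derivatives fall on $\chi$ (supported in $\mcO\setminus\mcO'$) together with the nonvanishing source contribution $-\chi\mb{B}\bbn H(k\theta)$. Denoting by $\mathcal I H$ the bounded periodic antiderivative of $H$ (which exists since $H$ has zero mean), I would construct
\begin{equation*}
(\n_c,\V_c) = \frac{1}{k}\bigl(a_1(x,t),\,\mb{A}_2(x,t)\bigr)\,\mathcal I H(k\theta) + (\text{higher-order iterates}),
\end{equation*}
where $a_1$ and $\mb{A}_2\in\S_0^{n\times n}$ are smooth slowly varying coefficients chosen algebraically so that the $O(1)$ part of $\mathscr L[\tilde w^{(0)}+(\n_c,\V_c)]$ cancels. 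The leftover from derivatives falling on the coefficients and from the feedback $-\mb{B}\n_c$ is $O(1/k)$ and oscillatory; iterating produces a convergent Neumann-type series yielding $(\n_c,\V_c)\in C_c^\infty(\mcO;\R^n\times\S_0^{n\times n})$ with $\|(\n_c,\V_c)\|_{C^0}\le C/k$, via a Bogovskii-type localization at each step. A final $O(1/k)$ compactly supported modification enforces $\int\tilde w(\cdot,t)\,dx = 0$ for each $t$. Setting $\tilde w = \tilde w^{(0)}+(\n_c,\V_c)$, conclusion (1) holds by construction. For $k$ sufficiently large, the correctors are below $\eps/2$, so conclusion (2) follows from the fact that $w + \bar w H(k\theta)$ lies on the segment $[w_1,w_2]$; for (3), by equidistribution of $H(k\theta)$ the sets $\mcO_i = \{(x,t)\in\mcO': |H(k\theta) - \alpha_i| < \delta'\}$ with $\alpha_1 = -\mu_2$, $\alpha_2 = \mu_1$ have measures within $\eps$ of $\mu_i|\mcO|$.

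\textbf{Main obstacle.} Unlike the cutoff error, which is confined to $\mcO\setminus\mcO'$, the source residual $-\mb{B}\bbn H(k\theta)$ is $O(1)$ everywhere on the support of $\chi$, so one cannot shrink it by refining the cutoff alone. Overcoming this is the key observation in the paper: correctors built from the antiderivative $\mathcal I H(k\theta)$ automatically gain a factor $1/k$ from the oscillation, and the feedback $-\mb{B}\n_c$ becomes $O(1/k^2)$ and can be absorbed iteratively. The second delicate point is producing $\V_c$ that is simultaneously symmetric trace-free, compactly supported in $\mcO$, and compatible with the algebraic ansatz; this requires projecting the candidate coefficient onto $\S_0^{n\times n}$ at each iteration and estimating the projection residual, which, being itself oscillatory, is again absorbed at the next step.
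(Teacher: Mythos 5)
Your overall strategy is the same as the paper's: localize by a cutoff, take a smoothed step-function profile oscillating along the plane-wave direction $(\tau,\xi)$ supplied by Lemma~\ref{Lemma:AD}, and exploit that the source contribution $-\mb{B}\n$ is one order lower in the frequency parameter so that the corrector is $O(1/k)$. The genuine divergence is in \emph{how} the residual is removed. You propose an iterative Neumann-type series, solving at each stage an algebraic linear system for coefficients $a_1$ and $\mb{A}_2\in\S_0^{n\times n}$ and absorbing the new $O(1/k)$ leftover at the next stage, together with a ``Bogovskii-type localization'' at each step. The paper instead removes the residual in a \emph{single} step: the leading profile is prepared as $\lambda^{-6}\Delta^3\bigl[\bar{w}\,h_6(\lambda\theta)\phi\bigr]$ with $h_6$ the sixth antiderivative of the smoothed step $h_0$, and the momentum residual is corrected exactly by $\V''=-\lambda^{-6}\mathcal{R}[\Delta^2\mb{f}]$ where $\mathcal{R}$ is the explicit right inverse of the divergence from Lemma~\ref{Lemma:OpR}. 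The whole point of the $\Delta^3$ preparation is Property~(2) of Lemma~\ref{Lemma:OpR}: $\mathcal{R}[\Delta^2\mb{f}]$ is a \emph{local} (differential, not integral) expression in $\mb{f}$, so the corrector automatically inherits compact support in $\supp\phi$, and Property~(3) gives $\int\V''\,dx=0$ for free. No iteration, no separate projection onto $\S_0^{n\times n}$, no convergence argument. This is more elementary and avoids the points you left unresolved.

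Those unresolved points are real. First, your ``final $O(1/k)$ compactly supported modification'' to enforce $\int\tilde{w}(\cdot,t)\,dx=0$ will in general destroy the exact identity $\mathscr{L}\tilde{w}=0$, which the lemma requires; in the paper the zero-average condition is not enforced after the fact but comes automatically because every term is $\lambda^{-6}$ times spatial derivatives of compactly supported functions. Second, the step ``projecting the candidate coefficient onto $\S_0^{n\times n}$ at each iteration'' and the claim that the projection residual is oscillatory and absorbable is not obviously correct: the residual of the projection is a slowly varying matrix multiplying $H(k\theta)$ and it is not clear it decays in $k$ without further structure. Third, for the series to yield a $C_c^\infty$ function you need the supports of all iterates to stay inside a fixed compact subset of $\mcO$ and convergence in every $C^m$ norm; the paper sidesteps this entirely by solving $\nabla\cdot\V''=\mb{B}\n-\partial_t\n-\nabla\cdot\V'$ exactly. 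So the proposal captures the right high-frequency heuristic but, as written, has gaps where the paper's construction is airtight; the $\Delta^3$ preparation combined with the locality of $\mathcal{R}[\Delta^2\cdot]$ is the device you are missing.
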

The following characterization of $\Lambda_{\mathscr{L}}$ is a consequence of Lemma \ref{Lemma:LPW}.
\begin{Corollary}\label{Cor:Lambda_L}
$ \Lambda \subset \Lambda_{\mathscr{L}} $.
\end{Corollary}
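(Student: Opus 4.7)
The plan is to build the required sequence by applying Lemma~\ref{Lemma:LPW} on a refining grid of space--time subcubes of $Q_1$, thereby manufacturing fine-scale oscillation by hand. Fix $(\bbn,\bV)\in \Lambda$ and observe first that $\Lambda$ is a cone: if $(\bbn,\bV)h(\tau t+\xi\cdot x)$ solves $\mathscr{L}_0(\n,\V)=0$ for every smooth $h$, then so does $2(\bbn,\bV)h(\tau t+\xi\cdot x)=(\bbn,\bV)(2h)(\tau t+\xi\cdot x)$, so $2(\bbn,\bV)\in \Lambda$. This allows one to invoke Lemma~\ref{Lemma:LPW} with $w_1=-(\bbn,\bV)$, $w_2=(\bbn,\bV)$, $w=\tfrac12 w_1+\tfrac12 w_2=0$, $\mu_1=\mu_2=\tfrac12$, and $\bar w=w_2-w_1=2(\bbn,\bV)\in \Lambda$.

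For each large $k\in \mathbb{N}$, I would partition $Q_1$ (up to a set of measure zero) into $k^{n+1}$ open subcubes $\{Q_{1/k}(\bar z_j)\}_{j=1}^{k^{n+1}}$ of side $1/k$. On each such cube, Lemma~\ref{Lemma:LPW} applied with $\mcO=Q_{1/k}(\bar z_j)$, the data above, and $\varepsilon=\varepsilon_k:=k^{-(n+2)}$ yields $\tilde w_k^{(j)}\in C_c^\infty(Q_{1/k}(\bar z_j);\R^n\times \S_0^{n\times n})$ with $\mathscr{L}\tilde w_k^{(j)}=0$, spatial mean zero for each $t$, values $\varepsilon_k$-close to $[-(\bbn,\bV),(\bbn,\bV)]$, and disjoint open subsets $\mcO_j^{(1)},\mcO_j^{(2)}\subset Q_{1/k}(\bar z_j)$ on which $\tilde w_k^{(j)}$ is $\varepsilon_k$-close to $w_1$ and $w_2$ respectively, with $|\mcO_j^{(i)}|\geq \tfrac12 k^{-(n+1)}-\varepsilon_k$. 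Because the subcubes are pairwise disjoint, the sum $\tilde w_k:=\sum_j \tilde w_k^{(j)}$ lies in $C_c^\infty(Q_1;\R^n\times \S_0^{n\times n})$ and satisfies $\mathscr{L}\tilde w_k=0$.

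I would then verify the three defining properties of $\Lambda_{\mathscr{L}}$ for $\tilde w_k$. Uniform closeness to $[-(\bbn,\bV),(\bbn,\bV)]$ is immediate from $\varepsilon_k\to 0$. For distributional convergence, test against $\phi\in C_c^\infty(Q_1)$; the mean-zero property of Lemma~\ref{Lemma:LPW} together with Fubini give $\int \tilde w_k^{(j)}\,dxdt=0$, so
\[
\int_{Q_1}\tilde w_k\,\phi\,dxdt \;=\; \sum_{j}\int_{Q_{1/k}(\bar z_j)} \tilde w_k^{(j)}\bigl(\phi-\phi(\bar z_j)\bigr)\,dxdt,
\]
which is bounded by a constant times $\|\nabla \phi\|_\infty/k$ and hence tends to $0$. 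For the $L^2$ lower bound, $\bigcup_{j,i}\mcO_j^{(i)}$ has total measure at least $1-2k^{n+1}\varepsilon_k=1-2/k$, on which $|\tilde w_k|\geq |(\bbn,\bV)|-\varepsilon_k$; therefore $\int_{Q_1}|\tilde w_k|^2\,dxdt\geq \tfrac18|(\bbn,\bV)|^2$ for $k$ sufficiently large, and discarding finitely many initial indices gives the required uniform lower bound.

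The main obstacle is the tension between the pointwise requirement that $\tilde w_k$ be close to $\{\pm(\bbn,\bV)\}$ (forced by the $L^2$ lower bound) and distributional convergence to $0$; this is precisely what the grid refinement is designed to resolve, forcing the values $\pm(\bbn,\bV)$ to be attained on arbitrarily fine interleaved pieces whose integrals against smooth test functions vanish by a first-order Taylor expansion of $\phi$ on each subcube. The specific choice $\varepsilon_k=k^{-(n+2)}$ is dictated by the need to dominate the $k^{n+1}$-fold accumulation of the measure error from property~(3) of Lemma~\ref{Lemma:LPW}; any slower rate could let the accumulated measure loss swamp the $L^2$ lower bound.
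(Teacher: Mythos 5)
Your proof is correct, and it takes a genuinely different (and arguably more careful) route than the paper. The paper also tiles $Q_1$ by $2^{k(n+1)}$ subcubes of side $2^{-k}$, but it applies Lemma~\ref{Lemma:LPW} \emph{once} on $Q_1$ with $\eps = 1/k$ to obtain a single profile $v_k$, and then fills each subcube with the rescaled copy $a_j^{(k)}(z) = v_k\big((z - z_j^{(k)})/2^{-k}\big)$; the $L^2$ mass is then preserved exactly by the scaling, so no error accumulation needs tracking. You instead invoke Lemma~\ref{Lemma:LPW} anew on each subcube $Q_{1/k}(\bar z_j)$ with $\eps_k = k^{-(n+2)}$ and sum. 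This buys robustness with respect to the source: $\mathscr{L}$ contains the zeroth-order term $\mb{B}\n$ and is not invariant under the parabolic rescaling $z \mapsto z/\lambda$ --- if $\mathscr{L}v = 0$ then $\mathscr{L}\big[v(\cdot/\lambda)\big] = \big(0,\,(\lambda^{-1}-1)\mb{B}\,v_{\n}(\cdot/\lambda)\big)$, which is nonzero for $\mb{B}\neq 0$ --- so the paper's ``It is clear that $\mathscr{L}\tilde w_k = 0$'' after rescaling is in fact delicate, whereas your per-cube application (explicitly licensed by the lemma, since $\mcO$ may be any open set) gives $\mathscr{L}\tilde w_k = 0$ outright. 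The price you pay is controlling the accumulated measure loss $2k^{n+1}\eps_k$ in the $L^2$ lower bound, which forces $\eps_k = o(k^{-(n+1)})$; your choice $\eps_k = k^{-(n+2)}$ handles this correctly. The remaining verifications (the cone property of $\Lambda$ to justify $\bar w = 2(\bbn,\bV) \in \Lambda$, distributional convergence via $\int_{\R^n} \tilde w_k^{(j)}(\cdot,t)\,dx = 0$ plus a first-order Taylor expansion of the test function on each subcube, and the uniform constant in the $L^2$ bound after discarding finitely many $k$) match the paper in spirit and are sound.
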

\begin{proof}[Proof of Corollary \ref{Cor:Lambda_L}]
	Let $\bar{w}= (\bbn,\bV) \in \Lambda$. For each $k \in \mathbb{N}$, applying Lemma \ref{Lemma:LPW} to $w=0=\frac{1}{2}(-\bar{w}) + \frac{1}{2}\bar{w}$ with $\eps=\frac{1}{k}$ yields that there exists $v_k \in C_c^\infty(Q_1)$ satisfying \eqref{eq:Lemma6-P1}  and 
	\[\text{dist}(v_k,[-\bar{w},\bar{w}]) \to 0, \quad  \int_{Q_1} |v_k|^2 dxdt \to |\bar{w}|^2,  \quad \text{ as } k \to \infty. \]
	Let $\{Q_{2^{-k}}(z_j^{(k)})\}_{j=1}^{2^{k(n+1)}}$ be the decomposition of $Q_1$ into mutually disjoint cubes of length $2^{-k}$. Let 
	\[ \tilde{w}_k =  \sum_{j=1}^{2^{k(n+1)}} a_j^{(k)}\quad  \text{ where } a_j^{(k)}(z) = v_k\left(\frac{z-z_j^{(k)}}{2^{-k}}\right) \in C_c^\infty(Q_{2^{-k}}(z_j^{(k)})).\]
	It is clear that $\mathscr{L}\tilde{w}_k = 0$ and $\text{dist}(\tilde{w}_k,[-\bar{w},\bar{w}]) \to 0$ as $k\to \infty$. Since $a_j^{(k)}(\cdot,t) \in \mcL(\R^n)$ and $\text{diam}(\supp a_j^{(k)}) \leq \sqrt{(n+1)}2^{-k} \to 0$, one has  $\tilde{w}_k \to 0$ in distribution. Notice that 
	\[ \int_{Q_1} |\tilde{w}_k|^2  = \sum_{j=1}^{2^{k(n+1)}} \int |a_j^{(k)}|^2 = \int_{Q_1} |v_k|^2. \]
	Therefore, for sufficiently large $k$, one has
	\[ \int_{Q_1} |\tilde{w}_k|^2 dx dt  \geq \frac{1}{2}|\bar{w}|^2. \]
	This shows that $\bar{w} \in \Lambda_{\mathscr{L}} $ and finishes the proof of the corollary.
\end{proof}
The rest of the section is devoted to the proof of Lemma \ref{Lemma:LPW}.

To correct the errors from the source terms, we recall the linear operators which solve divergence equations in $C^\infty(\T^n;\S_0^{n\times n})$  (\cite{dLSz4}) and  adapt them  to the whole spaces.

\begin{Lemma}\cite[Proposition 5.1]{dLSz4}\label{Lemma:OpRTorus} 
	There exists a linear operator $\mathcal{R}_{\T^n}$ from  $C^\infty(\T^n;\R^n) $ to $C^\infty(\T^n; \S_0^{n\times n})$ such that for any $\mb{f} \in C^\infty(\T^n;\R^n)$, it holds that
	\begin{align*}
		\nabla \cdot \mathcal{R}_{\T^n}[\mb{f}]= \mb{f}- \fint_{\T^n} \mb{f},
	\end{align*}
	and
	\begin{align*}
		\| \mathcal{R}_{\T^n}[\mb{f}] \|_{C^{1,\alpha}(\T^n)} \leq C(n,\alpha) \|\mb{f}\|_{C^{\alpha}(\T^n)}.
	\end{align*}
\end{Lemma}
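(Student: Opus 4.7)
\textbf{Proof proposal for Lemma \ref{Lemma:OpRTorus}.}

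The plan is to construct $\mathcal{R}_{\mathbb{T}^n}$ explicitly via the inverse Laplacian on the torus, then verify symmetry, trace-freeness, the divergence identity, and the Schauder estimate by $C^{k,\alpha}$-boundedness of Calder\'on--Zygmund operators on $\mathbb{T}^n$.

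First I reduce to the mean-zero case: since the desired identity reads $\nabla\cdot\mathcal{R}_{\mathbb{T}^n}[\mathbf{f}]=\mathbf{f}-\fint_{\mathbb{T}^n}\mathbf{f}$, I may replace $\mathbf{f}$ by $\mathbf{g}:=\mathbf{f}-\fint_{\mathbb{T}^n}\mathbf{f}$, which lies in the mean-zero subspace on which $\Delta^{-1}$ is well-defined. For each component I set $v_i:=\Delta^{-1}g_i\in C^{2,\alpha}(\mathbb{T}^n)$ with zero mean. I then define the ``symmetric part''
\begin{equation*}
S_{ij}:=\partial_i v_j+\partial_j v_i-\tfrac{2}{n}\delta_{ij}\,\nabla\cdot\mathbf{v},
\end{equation*}
which is manifestly symmetric and trace-free. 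A direct computation gives
\begin{equation*}
(\nabla\cdot S)_i=\Delta v_i+\tfrac{n-2}{n}\,\partial_i(\nabla\cdot\mathbf{v})=g_i+\partial_i p,\qquad p:=\tfrac{n-2}{n}\nabla\cdot\mathbf{v},
\end{equation*}
so $S$ produces the correct divergence up to a gradient error $\nabla p$ which has zero mean (for $n=2$ this error already vanishes).

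To kill the gradient error, I solve $\Delta w=p$ on $\mathbb{T}^n$ with $w$ of zero mean, and set the ``correction''
\begin{equation*}
T_{ij}:=-\tfrac{n}{n-1}\bigl(\partial_i\partial_j w-\tfrac{1}{n}\delta_{ij}\,p\bigr),
\end{equation*}
which is symmetric and trace-free (its trace equals $-\tfrac{n}{n-1}(\Delta w-p)=0$). A short computation using $\partial_j\partial_j\partial_i w=\partial_i p$ gives $(\nabla\cdot T)_i=-\partial_i p$, so that $\mathcal{R}_{\mathbb{T}^n}[\mathbf{f}]:=S+T$ takes values in $\mathbb{S}_0^{n\times n}$, is linear in $\mathbf{f}$, and satisfies $\nabla\cdot\mathcal{R}_{\mathbb{T}^n}[\mathbf{f}]=\mathbf{g}=\mathbf{f}-\fint\mathbf{f}$.

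For the $C^{1,\alpha}$-estimate I invoke the standard Schauder bound $\|\Delta^{-1}h\|_{C^{k+2,\alpha}(\mathbb{T}^n)}\leq C(n,\alpha,k)\|h\|_{C^{k,\alpha}(\mathbb{T}^n)}$ for mean-zero $h$ on the torus (a Calder\'on--Zygmund estimate applied to the Fourier multiplier $|\xi|^{-2}$ on the complement of zero). This yields $\|\mathbf{v}\|_{C^{2,\alpha}}\leq C\|\mathbf{f}\|_{C^\alpha}$, hence $\|S\|_{C^{1,\alpha}}\leq C\|\mathbf{f}\|_{C^\alpha}$, and $\|p\|_{C^{1,\alpha}}\leq C\|\mathbf{f}\|_{C^\alpha}$. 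A second application gives $\|w\|_{C^{3,\alpha}}\leq C\|p\|_{C^{1,\alpha}}\leq C\|\mathbf{f}\|_{C^\alpha}$, so $\|T\|_{C^{1,\alpha}}\leq C\|\mathbf{f}\|_{C^\alpha}$ as well. Summing gives the desired bound.

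The only genuinely subtle point is matching the two derivative counts: $T$ involves $\partial_i\partial_j w$, which at first glance appears to lose a derivative compared with $S$, but because $p$ itself already contains a derivative of $\mathbf{v}$ (hence one more order of smoothness than $\mathbf{f}$), the double-Laplace-inversion $D^2\Delta^{-1}(\nabla\cdot\Delta^{-1}\mathbf{f})$ is again a zero-order Calder\'on--Zygmund operator on $\mathbf{f}$. Everything else is bookkeeping.
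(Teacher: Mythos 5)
Your construction is correct, and in fact it is algebraically identical to the paper's: expanding the Helmholtz projection $\mbP\mb{g}=\mb{g}-\nabla\Delta^{-1}\nabla\cdot\mb{g}$ in the explicit formula that the paper gives for the $\R^n$ analogue in Lemma~\ref{Lemma:OpR} yields precisely $\partial_i g_j+\partial_j g_i-\tfrac{n-2}{n-1}\partial_i\partial_j\Delta^{-1}(\nabla\cdot\mb{g})-\tfrac{1}{n-1}\delta_{ij}(\nabla\cdot\mb{g})$, which is exactly your $S+T$ after substituting $w=\tfrac{n-2}{n}\Delta^{-1}(\nabla\cdot\mb{v})$. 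For Lemma~\ref{Lemma:OpRTorus} the paper itself gives no proof but merely cites \cite[Proposition~5.1]{dLSz4}; your argument supplies a self-contained derivation, built in the same spirit as (and reducible to) the authors' own construction for Lemma~\ref{Lemma:OpR}, with the Newtonian potential replaced by the zero-mean torus inverse Laplacian and the estimate obtained from periodic Schauder bounds rather than Riesz transforms and Sobolev embedding.
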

We also need the following lemma, whose proof  is elementary and will be postponed to the end of the section.
\begin{Lemma}\label{Lemma:OpR}
	For any $\mb{f} \in C_c^\infty(\R^n;\R^n)$, there exists an $\mathcal{R}[\mb{f}]\in C^\infty(\R^n; \S_0^{n\times n})$ satisfying
	$$\nabla \cdot \mathcal{R}[\mb{f}]= \mb{f}.$$
	Furthermore, $\mathcal{R}$ satisfies the following properties:
	\begin{enumerate}
		\item $\mathcal{R}$ is a linear operator from $C_c^\infty(\R^n; \R^n)$ to $C^\infty(\R^n; \S_0^{n\times n})$;
		\item $\mathcal{R}[\Delta^2 \mb{f}]$ is a linear combination of third order derivatives of $\mb{f}$;
		\item $\supp \mathcal{R}[\Delta^2 \mb{f}] \subset \supp \mb{f}$ and $\mathcal{R}[\Delta^2 \mb{f}]\in \mathcal{L}(\R^n)$, i.e.,
		\[
		\int_{\R^n} \mathcal{R}[\Delta^2 \mb{f}]dx = 0;
		\]
		\item there exists a constant $0<\alpha<1$ such that 
		\[ \|\mathcal{R}[\mb{f}]\|_{C^\alpha(\R^n)} \leq C \max(\|\mb{f}\|_{L^1(\R^n)},\|\mb{f}\|_{L^\infty(\R^n)}), \]
		where the constants $C$ and $\alpha$  depend only on the dimension $n$.
	\end{enumerate}
\end{Lemma}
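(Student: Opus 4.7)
The plan is to construct $\mathcal{R}$ explicitly via the Newton potential on $\R^n$, adapting the torus construction of \cite{dLSz4}. Let $\Gamma$ denote the fundamental solution of $-\Delta$ on $\R^n$ (with $n\geq 2$). For $\mb{f}\in C_c^\infty(\R^n;\R^n)$, I first introduce the smooth auxiliary fields
\[ v_i := -\Gamma * f_i, \qquad w := -\Gamma * (\nabla \cdot \mb{v}), \]
so that $\Delta v_i = f_i$ and $\Delta w = \nabla \cdot \mb{v}$, and then take the ansatz
\[ \mathcal{R}[\mb{f}]_{ij} := \partial_i v_j + \partial_j v_i - \tfrac{2}{n}(\nabla \cdot \mb{v})\delta_{ij} + \alpha\,\partial_i\partial_j w + \beta\,\Delta w\,\delta_{ij}. \]
Linearity and symmetry in $(i,j)$ are immediate. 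The trace-free condition forces $\alpha+n\beta=0$, while $\sum_j \partial_j \mathcal{R}[\mb{f}]_{ij}=f_i$, computed directly using $\Delta v_i=f_i$ and $\Delta w=\nabla\cdot\mb{v}$, forces $\alpha+\beta=-\tfrac{n-2}{n}$. Solving this $2\times 2$ system gives $\alpha=-\tfrac{n-2}{n-1}$, $\beta=\tfrac{n-2}{n(n-1)}$ for every $n\geq 2$ (the correction terms vanish when $n=2$), which establishes (1) together with the divergence identity.

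For (2) and (3) I would substitute $\Delta^2 \mb{f}$ into the formula. Since $\mb{f}$ is compactly supported, $-\Gamma*(\Delta^2 f_i)=-\Delta f_i$ and, by the analogous identity applied to $\nabla\cdot\mb{v}=-\Gamma*\Delta(\nabla\cdot\mb{f})$, the auxiliary field $w$ reduces to $-\nabla\cdot\mb{f}$, both supported in $\supp\mb{f}$. Substituting back, every term of $\mathcal{R}[\Delta^2\mb{f}]_{ij}$ becomes a linear combination of third-order derivatives of $\mb{f}$ supported in $\supp\mb{f}$, and $\int_{\R^n}\mathcal{R}[\Delta^2\mb{f}]\,dx=0$ by a single integration by parts on each term.

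The H\"{o}lder estimate (4) is the most delicate part and where I expect the main obstacle. Since $\mathcal{R}[\mb{f}]$ depends on first derivatives of $v$ and second derivatives of $w$, I would first bound $\|\nabla v\|_{L^\infty}$ using $|\nabla\Gamma(y)|\leq C|y|^{-(n-1)}$: splitting the convolution over $\{|y|<r\}\cup\{|y|\geq r\}$ and estimating the two pieces by $C\|\mb{f}\|_{L^\infty}r$ and $C\|\mb{f}\|_{L^1}r^{-(n-1)}$ respectively, then optimizing in $r$, gives $\|\nabla v\|_{L^\infty}\leq C\max(\|\mb{f}\|_{L^1},\|\mb{f}\|_{L^\infty})$. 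The H\"{o}lder bound $\|\nabla v\|_{C^\alpha}$ for some $\alpha\in(0,1)$ depending only on $n$ then follows from Calder\'{o}n--Zygmund theory applied to $\nabla^2\Gamma*\mb{f}$ combined with the Morrey embedding $W^{1,p}\hookrightarrow C^{0,1-n/p}$ for a fixed $p>n$. For $\nabla^2 w$ I would write $w=\Gamma*\Gamma*(\nabla\cdot\mb{f})$ and iterate the same kernel estimates. The subtle point is that $\Gamma$ is non-integrable at infinity, which is precisely why the $L^1$ norm of $\mb{f}$ must enter alongside $\|\mb{f}\|_{L^\infty}$; I would track this dependence carefully through each kernel splitting to confirm the final estimate has the stated form.
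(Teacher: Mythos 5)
Your proposal is correct and is in substance the same construction as the paper's: your ansatz with $\alpha=-\tfrac{n-2}{n-1}$, $\beta=\tfrac{n-2}{n(n-1)}$, after using $\Delta w=\nabla\cdot\mb{v}$ to combine the $\delta_{ij}$ terms, coincides exactly with the paper's formula $\mathcal{R}[\mb{f}]=\tfrac{n-2}{2(n-1)}[\nabla\mbP\mb{g}+(\nabla\mbP\mb{g})^t]+\tfrac{n}{2(n-1)}[\nabla\mb{g}+(\nabla\mb{g})^t]-\tfrac{1}{n-1}(\nabla\cdot\mb{g})\mb{I}$ with $\mbP\mb{g}=\mb{g}-\nabla w$; the only difference is that you solve for the coefficients from the trace-free and divergence constraints while the paper just writes the answer, and the treatment of (4) via Calder\'on--Zygmund plus Morrey is likewise what the paper invokes under ``boundedness of the Riesz operators and Sobolev embeddings.'' One small correction: with $\Gamma$ the fundamental solution of $-\Delta$, $-\Gamma*\Delta^2 f_i$ is $+\Delta f_i$ (not $-\Delta f_i$) and $w$ reduces to $+\nabla\cdot\mb{f}$ (not $-\nabla\cdot\mb{f}$), since $-\Gamma*\Delta h=h$ for compactly supported $h$; this does not affect the conclusions of (2) and (3).
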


\begin{proof}[Proof of Lemma \ref{Lemma:LPW}]
	Let $\delta$ be a small positive constant to be chosen later. Fix a smooth cut-off function $\phi \in C^\infty_c(\mcO)$ such that
	\begin{align}
		0 \leq \phi \leq 1 \quad \text{ and } \quad \mcH^{n+1}\left(\{(x,t)\in \mcO:\phi(x,t) \neq 1 \}\right)<\delta. \label{phi}
	\end{align}
	Set
	\[ h(s) = \begin{cases}
	-\mu_2,\qquad &s \in (0,\mu_1],\\
	\mu_1, \qquad &s \in (\mu_1,1],
	\end{cases} \]
	and extend $h$ as a periodic function with period $1$. Notice that $\int_{0}^{1}h(s)ds = 0$. Let $h_0$ be a  $1$-periodic smooth approximation of $h$ such that 
	\begin{equation}
	 -\mu_2 \leq h_0 \leq \mu_1,\,\, \mcH^1(\{s \in [0,1]:h(s)\neq h_0(s)\}) < \delta,\,\, \int_{0}^1 h_0(s)ds = 0. \label{h_0}
	\end{equation}
For $k=0,1,\cdots$, let 
\[\tilde{h}_{k+1}(s)= \int_{0}^{s}h_k(\sigma)d\sigma, \quad h_{k+1}(s) = \tilde{h}_{k+1}(s) - \int_{0}^{1}\tilde{h}_{k+1}(\sigma)d\sigma. \] 
Thus for any $k \geq 0$, one has
\[ h_{k}^{(j)}(s) = h_{k-j}(s) \quad \text{ for } 0 \leq j \leq k, \quad \int_{0}^{1} h_k(s)ds = 0, \]
and 
\[ \|h_k\|_{L^\infty} \leq \|h_{k-1}\|_{L^\infty} \leq \cdots \leq \|h_0\|_{L^\infty}. \]
	
	Denote $\bar{w} = (\bar{\n},\bar{\V})$ and let $(\tau,\xi) \in \R \times S^{n-1}$ be associated with $\bar{w} \in \Lambda$. Set
	\begin{equation}\label{eq:defn'V'}
		\n'=\lambda^{-6}\Delta^3 [\bar{\n}h_6(\lambda \tau t +\lambda \xi \cdot x) \phi],\quad  \V'=\lambda^{-6}\Delta^3 [\bar{\V}h_6(\lambda \tau t +\lambda \xi \cdot x) \phi],
	\end{equation}
	and
	\begin{equation*}
		\n''= -\nabla \Delta^{-1} \nabla \cdot \n', \quad   \V''=\mathcal{R}[\mb{B} (\n'+\n'') - \partial_t (\n'+\n'') - \nabla \cdot \V'],
	\end{equation*}
	where $\mathcal{R}$ and $\Delta^{-1}\u:=\mcN * \u$ are the operators defined in the proof of Lemma \ref{Lemma:OpR}, and $\lambda \in \R$ is a large positive constant to be determined later. Define \[\tilde{w}=(\n,\V)=(\n'+\n'',\V'+\V'').\]
	It will be verified below  that \eqref{eq:Lemma6-P1} holds, $\supp \tilde{w} \subset \supp \phi$ and 
	\begin{equation}\label{ineq:est-tilde-w}
	\|\tilde{w}(x,t) - \bar{w} h_0(\lambda\tau t+\lambda \xi \cdot x) \phi(x,t) \|_{L^\infty(\mcO)} \leq C(\bar{w},\phi,h_0)\lambda^{-1}.
	\end{equation}
	Indeed, it follows from Lemma \ref{Lemma:OpR} that
	\begin{equation*}
	\nabla \cdot \n = \nabla \cdot (\n' + \n'')= \nabla \cdot \n' - \nabla \cdot \nabla \Delta^{-1} \nabla \cdot \n'=0,
	\end{equation*}
	and
	\begin{equation*}
	\nabla \cdot \V =  \nabla \cdot \V' + \nabla \cdot \mathcal{R}[\mb{B} \n - \partial_t \n - \nabla \cdot \V'] = \mb{B} \n - \partial_t \n.
	\end{equation*}
	Therefore, $(\n,\V)$ solves the linear system $\mathscr{L}(\n,\V)=0$.
	
	Direct computations show that
	\begin{equation}\label{eq:expn'}
	\begin{aligned}
	\n' &= \bar{\n}h_0(\lambda \tau t +\lambda \xi \cdot x) \phi + \lambda^{-6}\bar{\n}\sum_{|\beta|\geq 1, |\alpha+\beta|=6} C_{\alpha,\beta} \ptl_x^\alpha\left(h_6(\lambda \tau t +\lambda \xi \cdot x)\right)\ptl_x^\beta \phi, \\
	&=\bar{\n}h_0(\lambda \tau t +\lambda \xi \cdot x) \phi + \lambda^{-6}\bar{\n}\sum_{|\beta|\geq 1, |\alpha+\beta|=6}\lambda^{|\alpha|} C_{\alpha,\beta} \xi^\alpha h_{6-|\alpha|}(\lambda \tau t +\lambda \xi \cdot x)\ptl_x^\beta \phi
	\end{aligned}
	\end{equation}
	and
	\begin{align}
	\V' =\bar{\V}h_0(\lambda \tau t +\lambda \xi \cdot x) \phi +  \lambda^{-6}\bar{\V}\sum_{|\beta|\geq 1, |\alpha+\beta|=6} \lambda^{|\alpha|} C_{\alpha,\beta} \xi^\alpha h_{6-|\alpha|}(\lambda \tau t +\lambda \xi \cdot x)\ptl_x^\beta \phi.\label{eq:expV'}
	\end{align}
	It follows from \eqref{eq:defn'V'} that $\supp (\n',\V')\subset \supp \phi$, and $\int_{\R^n}(\n',\V')dx=0$.
	The expressions \eqref{eq:expn'} and \eqref{eq:expV'} imply that
	\begin{equation}\label{estnVb}
	\|(\n',\V')-(\bar{\n},\bar{\V})h_0(\lambda \tau t +\lambda \xi \cdot x) \phi\|_{L^\infty(\mcO)}\leq C(|\bar{\n}|,|\bar{\V}|,\phi,h_0)\lambda^{-1}.
	\end{equation}
	
	
	It follows from the property $\bar{\n} \cdot \xi = 0$ that
	\begin{align*}
	\n'' = -\lambda^{-6}\nabla \Delta^{-1} \Delta^3 \nabla \cdot  [\bar{\n}h_6(\lambda \tau t +\lambda \xi \cdot x) \phi]
	=-\lambda^{-6} \nabla \Delta^2    [(\bar{\n} \cdot \nabla \phi)h_6(\lambda \tau t +\lambda \xi \cdot x) ].
	\end{align*}
	Hence it holds that $\supp \n''\subset \supp \phi, \int_{\R^n} \n'' dx = 0$ and 
	\begin{align}\label{inftynpp}
	\|\n''\|_{L^\infty(\mcO)}\leq  C(|\bar{\n}|,\phi,h_0)\lambda^{-1}.
	\end{align}
	
	Since $\mb{B}$ commutes with the Laplacian  $\Delta$, direct  computations  yield that
	\begin{align*}
	\V''&=-\mathcal{R}[\partial_t \n' + \nabla \cdot \V' + \partial_t \n'' - \mb{B} \n]\\
	&=-\lambda^{-6}\mathcal{R}\{ \Delta^3 [\partial_t  \left(\bar{\n}h_6\left( \lambda \tau t +\lambda \xi \cdot x\right)  \phi \right) + \nabla \cdot \left(\bar{\V}h_6\left( \lambda \tau t +\lambda \xi \cdot x\right)  \phi \right)    ]\\
	& \qquad -  \partial_t \nabla \Delta^2 [(\nabla \phi \cdot \bar{\n} )h_6\left( \lambda \tau t +\lambda \xi \cdot x\right)] \\
	& \qquad -  \mb{B} \Delta^3 [\bar{\n}h_6(\lambda \tau t +\lambda \xi \cdot x) \phi] + \mb{B}\nabla \Delta^2    [(\nabla \phi \cdot \bar{\n} )h_6(\lambda \tau t +\lambda \xi \cdot x) ]\}\\
	&=-\lambda^{-6}\mathcal{R}\{ \Delta^3 [\left(\bar{\n} \partial_t \phi + \bar{\V} \cdot \nabla \phi \right)h_6( \lambda \tau t +\lambda \xi \cdot x)        ]\\
	& \qquad - \Delta^2 \nabla \partial_t [(\nabla \phi \cdot \bar{\n} )h_6\left( \lambda \tau t +\lambda \xi \cdot x\right)] \\
	& \qquad - \Delta^2 \mb{B} \left[\Delta (\bar{\n}h_6(\lambda \tau t +\lambda \xi \cdot x) \phi) - \nabla     ((\nabla \phi \cdot \bar{\n} )h_6(\lambda \tau t +\lambda \xi \cdot x) )\right]\},
	\end{align*}
	where $\tau \bar{\n}+ \bar{\V} \cdot \xi = 0$ has been used in the last equality. Hence $V''$ can be  written as
	\begin{align*}
	\V''&=-\lambda^{-6}\mathcal{R} [\Delta^2 \mb{f}],
	\end{align*}
	where
	\begin{align*}
	\mb{f}&=\Delta [\left(\bar{\n} \partial_t \phi + \bar{\V} \cdot \nabla \phi \right)h_6( \lambda \tau t +\lambda \xi \cdot x)        ] -  \nabla \partial_t [(\nabla \phi \cdot \bar{\n} )h_6\left( \lambda \tau t +\lambda \xi \cdot x\right)] \\
	&\qquad -  \mb{B} \{\Delta [\bar{\n}h_6(\lambda \tau t +\lambda \xi \cdot x) \phi] - \nabla     [(\nabla \phi \cdot \bar{\n} )h_6(\lambda \tau t +\lambda \xi \cdot x) ]\}.
	\end{align*}
	It follows from Lemma \ref{Lemma:OpR} that $\mathcal{R}[\Delta^2 \mb{f}]$ is a linear combination of third order derivatives of $\mb{f}$. Therefore,  $\supp \V'' \subset \supp \mb{f}\subset  \supp \phi$, and $\int_{\R^n} \V'' dx =0$.
	Combining the above computations yields \eqref{eq:Lemma6-P1} and that $\supp \tilde{w} \subset \supp \phi$. Furthermore, it holds that
	\begin{align*}
	\|\V''\|_{L^\infty(\mcO)}=\lambda^{-6}\|\mathcal{R} [\Delta^2 \mb{f}]\|_{L^\infty(\mcO)}\leq  C(|\bar{\n}|,|\bar{\V}|,\phi,h_0)\lambda^{-1},
	\end{align*}
	which together with \eqref{estnVb} and \eqref{inftynpp} leads to \eqref{ineq:est-tilde-w}.  
	
	Since for $(x,t) \in \mcO, \bar{w}(h_0 \phi)(x,t) \in [w_1,w_2]$, it follows from \eqref{ineq:est-tilde-w} that 
	\begin{align*}
	\text{dist}(w+\tilde{w},[w_1,w_2])&\leq \text{dist}(w+\bar{w} h_0 \phi,[w_1,w_2]) + |\tilde{w}- \bar{w} h_0(\lambda\tau t+\lambda \xi \cdot x) \phi| \\
	&\leq  C(\bar{w},\phi,h_0)\lambda^{-1}.
	\end{align*}
	Define the disjoint open sets $\mcO_i$ as
	\[\mcO_i = \left\lbrace(x,t) \in \mcO: |w + \bar{w} h_0(\lambda\tau t+\lambda \xi \cdot x) \phi(x,t) - w_i| < \min\left(\frac{\eps}{2}, \frac{|w_2-w_1|}{4}\right)\right\rbrace.\] 
	It follows from \eqref{phi} and \eqref{h_0} that $\mcH^{n+1}(\mcO_i)$ can be arbitrary close to  $\mu_i\mcH^{n+1}(\mcO)$  for $\delta$   sufficiently small and $\lambda$ sufficiently large. For $(x,t) \in \mcO_i$, it holds that
	\begin{align*}
	|w+\tilde{w}(x,t)-w_i| &\leq |w + \bar{w} h_0(\lambda\tau t+\lambda \xi \cdot x) \phi - w_i| + |\tilde{w}- \bar{w} h_0(\lambda\tau t+\lambda \xi \cdot x) \phi| \\
	&\leq \frac{\eps}{2} + C(\bar{w},\phi,h_0)\lambda^{-1}.
	\end{align*}
	Therefore Properties (2) and (3) follow by first choosing $\delta$   sufficiently small and then $\lambda$ sufficiently large. The proof of the proposition is finished.
\end{proof}

\begin{proof}[Proof of Lemma \ref{Lemma:OpR}]
	Given $\mb{f}\in C_c^\infty(\R^n; \R^n)$,
	define $\mb{g}=\Delta^{-1}\mb{f}$ where $\Delta^{-1}u=\mcN*u$ with the Newtonian potential $\mcN$ defined by
	\begin{equation*}
	\mcN(x)=\left\{
	\begin{aligned}
	&\frac{1}{\Gamma(n)}\frac{1}{|x|^{n-2}}\,\, \text{for}\,\, n\geq 3,\\
	&\frac{1}{2\pi}\ln |x|\,\,\quad \,\, \text{for}\,\, n=2.
	\end{aligned}
	\right.
	\end{equation*}
	Set
	\begin{gather*}
	\mathcal{R}[\mb{f}] = \frac{n-2}{2(n-1)} [\nabla \mbP\mb{g} + (\nabla \mbP\mb{g})^t] + \frac{n}{2(n-1)} [\nabla \mb{g} + (\nabla \mb{g})^t] - \frac{1}{n-1} (\nabla \cdot \mb{g})\mb{I},
	\end{gather*}
	where
	\begin{align*}
	\mbP\mb{g} := \mb{g}-\nabla \Delta^{-1}\nabla \cdot \mb{g}.
	\end{align*}
	Then one can verify the properties of $\mathcal{R}$ by straightforward computations. First, one has
	\begin{align*}
	\nabla \cdot \mathcal{R}[\mb{f}]&=\frac{n-2}{2(n-1)} [\Delta \mbP\mb{g} + \nabla \div \mbP\mb{g}] + \frac{n}{2(n-1)} [\Delta \mb{g} + \nabla \div \mb{g}] - \frac{1}{n-1} (\nabla \div \mb{g})\\
	&=\frac{n-2}{2(n-1)} [\Delta \mb{g} - \nabla \div \mb{g} + 0] + \frac{n}{2(n-1)} [\Delta \mb{g} + \nabla \div \mb{g}] - \frac{1}{n-1} (\nabla \div \mb{g})\\
	&=\Delta \mb{g} = \Delta \Delta^{-1}\mb{f} = \mb{f}.
	\end{align*}
	It is clear that $\mathcal{R}[\mb{f}]$ is a symmetric matrix and linear in $\mb{f}$. Furthermore,
	\begin{align*}
	\tr \mathcal{R}[\mb{f}] &= \frac{n-2}{2(n-1)} [2 \nabla \cdot  \mbP\mb{g}] + \frac{n}{2(n-1)} [2 \nabla \cdot \mb{g}] - \frac{n}{n-1} (\nabla \cdot \mb{g})\\
	&=0+\frac{n}{n-1} (\nabla \cdot \mb{g})-\frac{n}{n-1} (\nabla \cdot \mb{g})=0.
	\end{align*}
	Therefore, $\mathcal{R}$ maps $C_c^\infty(\R^n;\R^n)$ to $C^\infty(\R^n; \S_0^{n\times n})$. This proves the existence of $\mathcal{R}$ and the property (1).
	
	Since
	\[\mbP\Delta \mb{f}= \Delta \mb{f} - \nabla \div \mb{f},\]
	one has
	\begin{equation}\label{eq:R_DeltaSquare_h}
	\begin{aligned}
	\mathcal{R}[\Delta^2 \mb{f}]&=\frac{n-2}{2(n-1)} [\nabla (\Delta \mb{f} - \nabla \div \mb{f}) + (\nabla (\Delta \mb{f} - \nabla \div \mb{f}))^t] \\
	& \qquad  + \frac{n}{2(n-1)} [\nabla \Delta \mb{f} + (\nabla \Delta \mb{f})^t] - \frac{1}{n-1} ( \Delta \div \mb{f}).
	\end{aligned}
	\end{equation}
	This implies the second property of $\mathcal{R}$.
	
	It follows from \eqref{eq:R_DeltaSquare_h}  that $\supp \mathcal{R}[\Delta^2 \mb{f}] \subset \supp \mb{f}$. If  $\mb{f} \in C_c^\infty(\R^n;\R^n)$, then taking integral for \eqref{eq:R_DeltaSquare_h} on $\R^n$ yields
	\begin{align*}
	\int_{\R^n} \mathcal{R}[\Delta^2 \mb{f}]dx=0.
	\end{align*}
	Hence one has the third property of $\mathcal{R}$.
	
	Note that
	\[
	\|\mb{f}\|_{L^p(\R^n)}\leq \max(\|\mb{f}\|_{L^1(\R^n)},\|\mb{f}\|_{L^\infty(\R^n)})\quad \text{for}\,\, 1\leq p\leq \infty.
	\]
	Thus the fourth property of $\mathcal{R}$ is then an easy consequence of the boundedness of the Riesz operators and Sobolev embeddings \cite{Stein70}. The proof of the lemma is completed.
\end{proof}

\section{Convex Integration}\label{secconint}

In this section, weak solutions are obtained by iterating strict subsolutions, based on the method of convex integration. The main goal is to show the following proposition.

\begin{Proposition}\label{Prop:ExSol}
	If $(\rho,\underline{\m},\underline{\U},q)$ is  a strict subsolution in $\mathcal{D}$, then there exist infinitely many pairs $(\m,\U)$ such that $(\rho,\m,\U,q)$ are subsolutions and
	\begin{equation}
	(\m,\U)(x,t)\in K_{(x,t)}  \text{ for a.e. }  (x,t) \text{ in }\mathcal{D}, \quad \text{ and } \quad \supp(\m-\underline{\m},\U-\underline{\U}) \subset \overline
	{\mcD}, \label{eq:mU}
	\end{equation}	
	where $K_{(x,t)}$ is the compact set associated with  the strict subsolution $(\rho,\underline{\m},\underline{\U},q)$ in Definition \ref{def:subsolu}.
\end{Proposition}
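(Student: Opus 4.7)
The plan is to run a Baire-category convex integration scheme in the spirit of De Lellis--Sz\'ekelyhidi \cite{dLSz2,Chiodaroli}, adapted to the state-dependent constraint sets $K_{(x,t)}$. Throughout the iteration the scalars $\rho$ and $q$ are frozen to those of the given strict subsolution; only the pair $(\m,\U)$ is perturbed, and the linear system $\mathscr{L}(\m,\U)=0$ (with the right-hand side coming from $(\rho,\underline{\m},\underline{\U},q)$) is preserved by the localized plane waves of Lemma \ref{Lemma:LPW}.

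\textbf{Step 1 (complete metric space).} Let $X_0$ be the collection of strict subsolutions $(\rho,\m,\U,q)$ in $\mcD$ with the same $\rho,q$ and the same compact constraint family $K_{(x,t)}$, which coincide with $(\rho,\underline{\m},\underline{\U},q)$ outside $\mcD$. Every $(\m,\U) \in X_0$ satisfies $\|(\m,\U)\|_{L^\infty(\mcD)}\leq M$ with $M:=\sup_{\overline{\mcD}}\mathfrak{C}(\rho,q)<\infty$ by \eqref{ineq:Kbdd}. Metrize $X_0$ by a distance $d$ inducing the weak-$*$ topology of $L^\infty(\mcD)$ on the closed ball of radius $M$, built from a countable dense family in $L^1(\mcD)$. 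Since this ball is weak-$*$ compact and metrizable, the completion $X:=\overline{X_0}^{\,d}$ is a complete metric space.

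\textbf{Step 2 (limits are subsolutions).} Because \eqref{eq:subsolutonPDE1}--\eqref{eq:subsolutonPDE2} is linear in $(\m,\U)$, because the boundary behavior outside $\mcD$ is preserved by weak-$*$ limits, and because the Hausdorff-continuity of $(x,t)\mapsto K_{(x,t)}$ makes $\{v:(\m,\U)(x,t)\in\text{conv }K_{(x,t)}\text{ a.e.}\}$ closed in the weak-$*$ topology, every $v\in X$ is a subsolution with $(\m,\U)\in\text{conv }K_{(x,t)}$ a.e.\ in $\mcD$.

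\textbf{Step 3 (gap functional).} Define
\[ I(v):=\int_{\mcD}\bigl(n\rho(x,t)q(x,t)-|\m(x,t)|^2\bigr)\,dx\,dt. \]
By \eqref{eq:eqDefK} one has $I\geq 0$ on $X$. A straightforward use of the strict convexity of $\m\mapsto|\m|^2$ shows that $(\m,\U)\in\text{conv }K_{(x,t)}$ a.e.\ together with $|\m|^2=n\rho q$ a.e.\ forces $(\m,\U)\in K_{(x,t)}$ a.e., which is precisely \eqref{eq:mU}. Moreover $I$ is upper semicontinuous on $X$ since $v\mapsto-\int_{\mcD}|\m|^2$ is weak-$*$ upper semicontinuous on $L^\infty$-balls.

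\textbf{Step 4 (perturbation/oscillation lemma).} This is the key analytic step: produce a monotone modulus $\omega$ with $\omega(s)>0$ for $s>0$ such that, for every $v\in X_0$ and every $\delta>0$, there exists $v'\in X_0$ with $d(v,v')<\delta$ and $I(v')\leq I(v)-\omega(I(v))$. To construct $v'$, partition $\mcD$ into open space-time cubes $Q_i\Subset\mcD$ so fine that $\rho,q,v$ and $K_{(x,t)}$ are each nearly constant on $Q_i$, with $|\mcD\setminus\bigcup_i Q_i|$ arbitrarily small. On a generic cube $Q_i$ with reference value $\bar v\in\intco K_{\bar{(x,t)}}$, apply Lemma \ref{Lemma:FSC} together with Carath\'eodory to write $\bar v$ as a convex combination of at most $N_n^{*}$ points of $K_{\bar{(x,t)}}$. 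Among these choose a pair $w_1,w_2$ with $\bar v\in(w_1,w_2)$ and $|w_2-w_1|$ bounded below by a positive function of the local gap $n\bar\rho\bar q-|\bar\m|^2$; such a lower bound is a geometric consequence of the fact (Lemmas \ref{Lemma:StateSpace} and \ref{Lemma:DimensionCounting}) that $K_{\bar\rho,\bar q}$ is the full sphere $S^{n-1}$ up to a linear isomorphism, with no preferred directions. By Lemma \ref{Lemma:AD} combined with Corollary \ref{Cor:Lambda_L}, $w_2-w_1\in\Lambda\subset\Lambda_{\mathscr{L}}$. Apply Lemma \ref{Lemma:LPW} on $Q_i$ with $w=\bar v$ to obtain $\tilde v_i\in C_c^\infty(Q_i;\R^n\times\S_0^{n\times n})$ solving $\mathscr{L}\tilde v_i=0$ such that $v+\tilde v_i$ is close to the segment $[w_1,w_2]$ on all of $Q_i$ and close to $\{w_1,w_2\}$ on a set of measure $\geq c|Q_i|$. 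Contracting $w_1,w_2$ slightly toward $\bar v$ (still respecting the lower bound on $|w_2-w_1|$) keeps $v+\tilde v_i\in\intco K_{(x,t)}$ pointwise, so that $v':=v+\sum_i\tilde v_i\in X_0$. On the gain-set inside each $Q_i$, $|\m+\tilde\m_i|^2-|\m|^2$ exceeds a definite fraction of $(n\bar\rho\bar q-|\bar\m|^2)$; summing and applying Jensen's inequality yields $I(v')\leq I(v)-\omega(I(v))$. The smallness in $d$ is automatic because $\tilde v_i(\cdot,t)\in\mcL(\R^n)$ is rapidly oscillating.

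\textbf{Step 5 (Baire conclusion).} Steps 3 and 4 imply that $\{v\in X:I(v)\geq\eta\}$ is nowhere dense in $X$ for every $\eta>0$, hence $\{v\in X:I(v)=0\}$ is a dense $G_\delta$ in the complete metric space $X$ and in particular uncountable. Every such $v$ satisfies \eqref{eq:mU} by Step 3.

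\textbf{Main obstacle.} The hard part will be Step 4. The delicate issues are: (i) keeping the perturbed quadruple inside the smaller set $\intco K_{(x,t)}$ rather than merely $\intco K_{\rho,q}$, which requires a careful cube-by-cube localization using the prescribed Hausdorff-continuity of $K_{(x,t)}$ together with a slight interior contraction of the chosen endpoints $w_1,w_2$; (ii) quantifying the lower bound on $|w_2-w_1|$ in terms of the scalar gap $n\rho q-|\m|^2$, which rests on the sphere-like geometry of $K_{\rho,q}$ captured by Lemma \ref{Lemma:DimensionCounting}; and (iii) ensuring compatibility of the perturbation with the source term $\mb{B}\m$ and with the support and mean-zero requirements --- this is exactly what Lemma \ref{Lemma:LPW} provides and is the reason the argument goes through for general constant $\mb{B}$.
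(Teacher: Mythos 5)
Your framework is the classical De~Lellis--Sz\'ekelyhidi Baire-category scheme, which is a legitimate alternative to what the paper does. The paper instead runs a direct constructive iteration (inspired by \cite{MS03,Choffrut,Sz2}): it controls $\int_{\mcD}\text{dist}(w_k,K_{(x,t)})$ rather than a scalar gap functional, and it extracts a strongly $L^2$-convergent sequence via the ``controlled weak convergence'' trick (showing $\|w_k\|_{L^2(\mcD_j)}^2$ is nearly nondecreasing and hence Cauchy). The upshot is that the paper never needs the uniform modulus $\omega$ that your Baire argument in Steps~4--5 requires; density of the solution set in $X$ is obtained directly, and infinitude follows because the dense set is infinite. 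These are genuinely different routes to the same end.

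There are, however, two concrete gaps in your Step~4 that would need to be repaired before the argument closes.

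First, the segment choice. After Carath\'eodory you have $\bar v = \sum_{l=1}^{N_n^*}\mu_l w_l$ with $w_l\in K_{(x,t)}$. It does \emph{not} follow that some pair $w_1,w_2$ among these satisfies $\bar v\in(w_1,w_2)$; a convex combination of $N_n^*>2$ affinely independent points lies in the relative interior of a higher-dimensional simplex, not on any of its edges. This is precisely why the paper's Lemma~\ref{Lemma:BB} is an \emph{inductive} construction through the chain of $\Lambda$-laminates $L^{(0)}\subset L^{(1)}\subset\cdots\subset L^{(N_n)}=\text{conv}\,L_\delta$: at each stage it writes the current target as a convex combination of two points that already lie in $L^{(k)}$ along a $\Lambda$-direction (guaranteed by Lemma~\ref{Lemma:AD}), oscillates toward those two, and recursively resolves each of them. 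A single application of Lemma~\ref{Lemma:LPW}, as you propose, does not reach the vertices.

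Second, the uniform modulus $\omega(I)$. You assert that $|w_2-w_1|$ can be bounded below by a function of the scalar gap $n\rho q-|\m|^2$ because $K_{\rho,q}$ is sphere-like. But the endpoints $w_1,w_2$ must lie in $K_{(x,t)}$, which in this paper is often a \emph{finite} subset of $K_{\rho,q}$ (indeed exactly $N_n^*$ points in the finite-states application). There is no sphere-like homogeneity to exploit, and the gap $I(v)$ does not control the distance from $v$ to $\partial(\text{conv}\,K_{(x,t)})$ --- one can be far from the vertices but hugging a face of the polytope while $I$ remains of order one, and then the admissible oscillation amplitude is tiny. So the claimed relation ``$I(v')\leq I(v)-\omega(I(v))$ with $\omega$ depending only on the value $I(v)$'' is not established and is genuinely delicate for variable/finite $K_{(x,t)}$. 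The paper's choice of the control functional $\int\text{dist}(w,K_{(x,t)})$ and the $L^2$-monotonicity argument avoid this issue entirely, which is exactly why those tools were introduced.

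Your Step~3 is fine (strict convexity of $\m\mapsto|\m|^2$ on the fiber, combined with membership in $\text{conv}\,K_{(x,t)}$, does pin $(\m,\U)$ to $K_{(x,t)}$), and Step~2 is fine. But as written, Steps~4--5 do not yet constitute a proof; they would need either the paper's inductive Lemma~\ref{Lemma:BB} plus the controlled-weak-convergence mechanism of Lemma~\ref{Lemma:OneStage}, or a new argument establishing the uniform modulus for arbitrary compact $K_{(x,t)}\subset K_{\rho,q}$.
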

 To obtain weak solutions, we use a convex integration scheme  to  construct  strongly convergent sequences of strict subsolutions. This gives a more constructive proof. The scheme is similar to the construction in \cite{MS03,Choffrut}, and the convergence  relies on the method of controlled weak convergence in \cite{Sz2}. An inductive argument using Lemma \ref{Lemma:LPW} yields the following lemma.
\begin{Lemma}\label{Lemma:BB}
	Suppose that $\tilde{K}\subset K_{\bar{\rho},\bar{q}}$, and $\bar{w} \in \intco \tilde{K}$ is a constant vector. For any $\varepsilon>0$,  there exists a $\tilde{w} \in  C_c^\infty(Q_1;\R^n \times \S_0^{n \times n})$ such that
	\begin{enumerate}
		\item $\tilde{w}$ satisfies \eqref{eq:Lemma6-P1};
		\item $\bar{w}+\overline{\text{image}(\tilde{w})} \subset  \intco \tilde{K}$, where $\text{image}(\tilde{w})=\tilde{w}(Q_1)$ as defined in \eqref{def:image} and $\overline{\text{image}(\tilde{w})}$ denotes its closure in $\R^n \times \S_0^{n \times n}$;
		\item the following integral estimate holds
		\begin{equation}
		\int_{Q_1} \text{dist}(\bar{w}+\tilde{w}(z),\tilde{K})dz<\varepsilon.
		\end{equation}
	\end{enumerate}
\end{Lemma}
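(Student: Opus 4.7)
\textbf{Proof plan for Lemma \ref{Lemma:BB}.}

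The plan is to represent $\bar{w}$ as a convex combination of points in $\tilde K$ via Carathéodory, slightly pull these points into $\intco\tilde K$, and then build $\tilde w$ by iterated application of Lemma \ref{Lemma:LPW} along pairwise $\Lambda$-directions.

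\emph{Step 1 (Carathéodory representation).} Since $\bar w \in \intco\tilde K \subset \text{conv}\,\tilde K$ and $\dim(\R^n \times \S_0^{n \times n}) = N_n$, by Carathéodory's theorem there exist $v_1, \ldots, v_M \in \tilde K$ and weights $\mu_i > 0$ with $\sum \mu_i = 1$ and $M \leq N_n^*$ such that $\bar w = \sum_{i=1}^M \mu_i v_i$.

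\emph{Step 2 (Interior pullback).} Choose $\eta \in (0,1)$ small and set $v_i^\eta := \eta \bar w + (1-\eta) v_i$. Then $\bar w = \sum_i \mu_i v_i^\eta$, each $v_i^\eta \in \intco\tilde K$ (as an interior-convex combination of $\bar w$ and $v_i \in \text{conv}\,\tilde K$), $|v_i^\eta - v_i| \leq \eta\,\text{diam}(\text{conv}\,\tilde K)$, and most importantly, for $i \neq j$,
\[ v_i^\eta - v_j^\eta = (1-\eta)(v_i - v_j) \in \Lambda, \]
since $v_i, v_j \in K_{\bar\rho,\bar q}$ implies $v_i - v_j \in \Lambda$ by Lemma \ref{Lemma:AD}, and $\Lambda$ is a cone (scaling of a plane-wave amplitude is absorbed into the profile $h$). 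Fix $\eta$ so that each $v_i^\eta$ lies within the $\varepsilon/(2|Q_1|)$-neighborhood of $\tilde K$, and so that $\overline{\text{conv}}\{v_i^\eta\}_{i=1}^M \subset \intco\tilde K$.

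\emph{Step 3 (Iterated lamination).} Construct $\tilde w$ hierarchically using Lemma \ref{Lemma:LPW}. For the base case $M=2$, apply the lemma directly on $\mcO = Q_1$ with $w = \bar w$, $w_1 = v_1^\eta$, $w_2 = v_2^\eta$: since $v_2^\eta - v_1^\eta \in \Lambda$, we obtain $\tilde w \in C_c^\infty(Q_1)$ satisfying \eqref{eq:Lemma6-P1} whose image lies in a small neighborhood of $[v_1^\eta, v_2^\eta] \subset \intco\tilde K$, close to $v_i^\eta$ on disjoint sets $\mcO_i$ of measure approximately $\mu_i |Q_1|$. For $M \geq 3$, proceed inductively: apply Lemma \ref{Lemma:LPW} on $Q_1$ using a $\Lambda$-connected pair (for instance, a segment through $\bar w$ in the direction $v_{M-1}^\eta - v_M^\eta \in \Lambda$, with endpoints pulled strictly inside $\intco\tilde K$), producing open sub-regions in $Q_1$ on which $\bar w + \tilde w$ is close to each of the chosen splitting points; then recurse on the remaining region with the remaining vertices, each time exploiting another pairwise $\Lambda$-direction $v_i^\eta - v_j^\eta$. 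Since each application of Lemma \ref{Lemma:LPW} produces $C_c^\infty$ perturbations supported on disjoint open sets, the total perturbation is a finite sum lying in $C_c^\infty(Q_1)$ and preserving \eqref{eq:Lemma6-P1}.

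\emph{Step 4 (Verification).} Property (1) holds since each building block satisfies $\mathscr{L}(\cdot) = 0$ and has zero spatial mean, and these properties are preserved under finite sums over disjoint supports. Property (2) holds because every intermediate splitting point was chosen inside a closed subset of $\intco\tilde K$, so the closure of the image of $\tilde w$ shifted by $\bar w$ stays uniformly separated from $\partial(\text{conv}\,\tilde K)$. Property (3) follows by choosing the tolerance $\varepsilon'$ in each application of Lemma \ref{Lemma:LPW} small compared to $\varepsilon/|Q_1|$, and using that each $v_i^\eta$ is within $O(\eta)$ of $\tilde K$, which is $\leq \varepsilon/(2|Q_1|)$ by the choice of $\eta$; the small measure of the transition sets where the image is far from any $v_i^\eta$ contributes at most $O(\varepsilon')$ to the integral.

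\emph{Main obstacle.} The principal difficulty is the inductive step for $M \geq 3$. A naive peel-off $\bar w = \mu_1 v_1^\eta + (1-\mu_1) u$ with $u = \sum_{i\geq 2} (\mu_i/(1-\mu_1)) v_i^\eta$ fails because $v_1^\eta - u$ is a sum of $\Lambda$-vectors and is \emph{not} itself in $\Lambda$ (the wave cone is not a linear subspace). The resolution is to use nested application of Lemma \ref{Lemma:LPW} on successively smaller open subsets of $Q_1$, each step using a single \emph{pairwise} $\Lambda$-direction $v_i^\eta - v_j^\eta$; the compact-support property of the perturbations produced by Lemma \ref{Lemma:LPW} allows these pieces to be summed without interference, and the process terminates after finitely many steps bounded by the tree depth of the decomposition, yielding the integral bound in (3).
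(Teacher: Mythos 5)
Your overall framework matches the paper's: Lemma \ref{Lemma:FSC} / Carath\'eodory gives a finite vertex set, you pull the vertices inward toward $\bar w$ to stay inside $\intco\tilde K$, and you build $\tilde w$ by iterated application of Lemma \ref{Lemma:LPW}. You also correctly identify the critical obstruction: a straight peel-off $\bar w = \mu_1 v_1^\eta + (1-\mu_1)u$ gives a direction $v_1^\eta - u$ that is a \emph{convex combination} of $\Lambda$-vectors, hence generally not in $\Lambda$, since $\Lambda$ is a cone but not a linear space.

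The gap is that the ``resolution'' in Step 3 does not actually dispose of this obstruction. When you apply Lemma \ref{Lemma:LPW} to $\bar w$ along the direction $v_{M-1}^\eta - v_M^\eta$, the two splitting points $p_1, p_2$ lie on the line through $\bar w$ in that direction; unless $\bar w$ already lies on the segment $[v_{M-1}^\eta,v_M^\eta]$, the $p_i$ are \emph{not} among the $v_j^\eta$, and there is no reason to expect $p_i - v_j^\eta \in \Lambda$ for the remaining $j$. So ``recurse on the remaining region with the remaining vertices'' cannot use another pairwise direction $v_i^\eta - v_j^\eta$ centered at $p_i$: the needed direction from $p_i$ to the convex hull of the remaining vertices need not be in the wave cone. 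This is precisely the obstruction you flagged, and invoking ``tree depth'' and ``compact support'' does not circumvent it. You have to exhibit a \emph{specific} finite $\Lambda$-lamination tree decomposing $\bar w$, i.e., choose the splitting points $p_1,p_2$ so that they in turn admit $\Lambda$-laminations into the vertex set. The paper handles this by introducing the lamination sets $L^{(j)}$ built from $L_\delta$ (successive convex combinations of $\Lambda$-connected pairs), asserting $\bar{w}\in\bigcup_{j\le N_n}L^{(j)}$, and then inducting on $j$ -- the induction hypothesis supplies a perturbation for each $w\in L^{(j)}$ on \emph{any} open set $\mcO\subset Q_1$, which is what allows the recursion on the subregions $\mcO_i$ to close. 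You have nothing playing the role of $L^{(j)}$.

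Two further things are missing. First, after the lamination structure is in place, you still need an error budget that does not degrade with the recursion depth; the paper uses nested pullbacks $L_{\tau_j\delta}$ with $\tau_j = 2^{-j-1}$ so that property (2) (staying in $\intco\tilde K$ with room to spare) and property (3) (the integral distance bound $(1-\tau_j)\eps$) are preserved at every level. Your Step 4 just says ``choose $\eps'$ small compared to $\eps/|Q_1|$,'' but because $\eps'$ must be rechosen at each level and the perturbations are summed, you need a decreasing, summable sequence of tolerances and nested target sets, not a single $\eps'$. Second, your sentence ``most importantly, for $i\neq j$, $v_i^\eta - v_j^\eta \in \Lambda$'' is fine, but it only handles pairs among the pulled-in vertices; by itself it does not tell you that $\bar w$ decomposes along such directions, which, as above, is the whole game.

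In short: the high-level plan is the paper's plan, and you saw where the difficulty lies, but the key step -- producing an explicit finite $\Lambda$-lamination of $\bar w$ from the finite vertex set and running a recursion compatible with it -- is left as a sketch that, as stated, would not go through. You would need to introduce the analogue of the sets $L^{(j)}$ and carry a quantified inductive hypothesis with nested target sets and decreasing tolerances.
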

\begin{proof}
	It follows from Lemma \ref{Lemma:FSC} that there exists a finite set $\{w_i\}_{i=1}^N \subset \tilde{K}$ such that $\bar{w} \in \intco \{w_i\}_{i=1}^N$. For $\beta \in (0,1)$, define 
	\[ L_\beta = \{w_i^{(\beta)} = \bar{w} + (1-\beta)(w_i - \bar{w})\}_{i=1}^N. \] It is easy to see that  for any $0<\beta_2<\beta_1<1$,
	\[ \overline{\text{ conv } L_{\beta_1}}  \subset \intco L_{\beta_2} \subset \intco \tilde{K}.\] 
	Observe that  $ \bar{w} \in \intco L_{\beta}$ for any $\beta \in (0,1)$.   Choose $\delta$  such that $\delta\max_i|w_i-\bar{w}|< \eps/4$.
	Set $L^{(0)} = L_{\delta}$ and
	\[  L^{(j+1)}=L^{(j)} \cup \{\nu_1 w_1' + \nu_2 w_2': w_i' \in L^{(j)}, w_2'-w_1' \in \Lambda, \nu_i \in (0,1), \nu_1 + \nu_2 = 1\}.\]
	 Lemma \ref{Lemma:AD} implies that 
	 \[ w_i^{(\delta)} - w_j^{(\delta)}= (1-\delta)(w_i-w_j) \in \Lambda. \] 
	 It follows from Carath\'{e}odory's theorem that 
	 \[   \bigcup_{j=0}^{N_n}L^{(j)} = \text{conv }L_{\delta}, \] where $N_n = \dim(\R^n \times \S_0^{n \times n})$. 
	
	 Let $\tau_j = 2^{-j-1}$ for $j=0,1,2,\cdots$.
	 If $w \in L^{(0)}=L_{\delta}$, set $\tilde{w}=0$. Clearly  $w \in L_{\delta} \subset \intco L_{\delta/2}$. Observe that $w=w_l^{\delta}$ for some $l$ and thus
	 \[ \text{dist}(w,\tilde{K})\leq \max_i |w_i - w_i^{\delta}| \leq \delta\max_i|w_i-\bar{w}|< \eps/4< (1-\tau_0)\eps. \]
	 Assume inductively that  for $0\leq j \leq k$, if $w \in  L^{(j)}$ and $\mcO \subset Q_1$ is any open set, there exists a $\tilde{w} \in C_c^\infty(\mcO)$ satisfying \eqref{eq:Lemma6-P1} and
	 \begin{align}
	 w+\overline{\text{image}(\tilde{w})} \subset  \intco L_{\tau_j\delta}, \quad \frac{1}{|\mcO|}\int_{\mcO} \text{dist}(w+\tilde{w}(z),\tilde{K})dz< (1-\tau_j)\eps. \label{eq:w_k}
	 \end{align} 
	 If $w \in (L^{(k+1)} \setminus L^{(k)})$,  then there exist $w_1', w_2' \in L^{(k)}$ such that
	\[ w = \nu_1 w_1' + \nu_2 w_2', \quad w_2'-w_1' \in \Lambda,\,\, \nu_i \in (0,1),\,\, \nu_1 + \nu_2 = 1. \]
	Given an open set $\mcO \subset Q_1$. Let $\eps_0$ be a small positive constant to be chosen later. In view of Lemma \ref{Lemma:LPW} there exists a $\tilde{w}_0 \in C_c^\infty(\mcO)$ satisfying  \eqref{eq:Lemma6-P1} and two disjoint open sets $\mcO_i \subset \mcO$ such that
	\[\text{dist}(w+\tilde{w}_0,[w_1',w_2'])<\eps_0 \text{ in } \mcO,\]
	and for $i=1,2$,
	\[ |w+\tilde{w}_0-w_i'| < \eps_0 \text{ in } \mcO_i, \quad |\mcH^{n+1}(\mcO_i) - \nu_i\mcH^{n+1}(\mcO)| < \eps_0. \]
	  Since $w_i' \in L^{(k)} \subset \text{conv } L_{\delta} \subset \intco L_{\tau_k \delta}$, it follows from the inductive assumption that  there exists a $\tilde{w}_i \in C_c^\infty(\mcO_i)$ satisfying  \eqref{eq:Lemma6-P1} and
	  \begin{equation}
	  w_i' + \overline{\text{image}(\tilde{w}_i)} \subset \intco L_{\tau_k \delta}, \quad \frac{1}{|\mcO_i|}\int_{\mcO_i} \text{dist}(w_i'+\tilde{w}_i(z),\tilde{K})dz <(1-\tau_k)\eps. \label{ineq:O_i}
	  \end{equation}
	 Let $\tilde{w}=\tilde{w}_0+\tilde{w}_1 + \tilde{w}_2$. Clearly $\tilde{w} \in C_c^\infty(\mcO)$ and satisfies  \eqref{eq:Lemma6-P1}. Furthermore, for $z \in \mcO \setminus (\mcO_1 \cup \mcO_2)$,
	 \begin{align*}
	 w+\tilde{w}(z)&=w+\tilde{w}_{0}(z) \in B_{\eps_0} + [w_1',w_2'] \subset B_{\eps_0} + \text{conv } L_{\tau_k \delta},
	 \end{align*}
	 and for $z\in \mcO_i, i=1,2$,
	 \begin{align*}
	 w+\tilde{w}(z)&=(w+\tilde{w}_0(z)-w_i')+(w_i'+\tilde{w_i}(z)) \in B_{\eps_0} + \text{conv } L_{\tau_k \delta}.
	 \end{align*}
	 Since $\overline{\text{conv }  L_{\tau_k\delta}} \subset \intco L_{\tau_{k+1}\delta}$, for  $\eps_0$ sufficiently small it holds that
	 \begin{equation}
	 B_{\eps_0} + \text{conv } L_{\tau_k \delta} \subset  \intco L_{\tau_{k+1}\delta}. \label{eq:eps_0-1}
	 \end{equation}  
	 Then $w+\overline{\text{image}(\tilde{w})} \subset  \intco L_{\tau_{k+1}\delta}$. 
	 
	 Notice that for $z \in \mcO_i, w+\tilde{w}(z)=w+\tilde{w}_0(z)+\tilde{w}_i(z)$. Thus for $z \in \mcO_i$,
	 \begin{align*}
	 \text{dist}(w+\tilde{w}(z),\tilde{K}) &\leq |w+\tilde{w}_0(z)-w_i'| + \text{dist}(w_i'+\tilde{w}_i(z),\tilde{K})\\
	 &\leq \eps_0 + \text{dist}(w_ i'+\tilde{w_i}(z),\tilde{K}).
	 \end{align*}
	Recalling the bounds \eqref{ineq:Kbdd} and that $\mcO_1$ and $\mcO_2$ are disjoint open subsets of $\mcO$ satisfying 
	\begin{align*}
	\mcH^{n+1}(\mcO \setminus (\mcO_1 \cup \mcO_2)) & = \mcH^{n+1}(\mcO) - \sum_{i=1}^{2} \mcH^{n+1}(\mcO_i)  \\
	&\leq \sum_{i=1}^{2}|\nu_i\mcH^{n+1}(\mcO) -\mcH^{n+1}(\mcO_i)|\\
	& < 2\eps_0,
	\end{align*} one can get that
	\begin{align*}
	\int_{\mcO} \text{dist}(w+\tilde{w}(z),\tilde{K})dz &\leq \sum_{i=1}^2 \int_{\mcO_i}\text{dist}(w+\tilde{w}(z),\tilde{K})dz + 2C(\bar{\rho},\bar{q})|\mcO \setminus (\cup_i\mcO_i )| \\
	& \leq \sum_{i=1}^2 \int_{\mcO_i}\eps_0 + \text{dist}(w_ i'+\tilde{w_i}(z),\tilde{K})dz + 4C(\bar{\rho},\bar{q})\eps_0  \\
	& \leq (|\mcO_1| + |\mcO_2|)(1-\tau_k)\eps+ C \eps_0,
	\end{align*}
	where  \eqref{ineq:O_i} is used in the last inequality. 
	Since $\tau_{k+1}<\tau_k$,  it holds  that 
	\begin{equation}
	\frac{1}{|\mcO|}\int_{\mcO} \text{dist}(w+\tilde{w}(z),\tilde{K})dz \leq (1-\tau_k)\eps+\frac{C\eps_0}{|\mcO|}< (1-\tau_{k+1})\eps,\label{eq:eps_0-2}
	\end{equation} 
	for $\eps_0$  sufficiently small.
	One can choose $\eps_0$ depending on $\mcH^{n+1}(\mcO),\eps,\tau_k$ and $\tau_{k+1}$ so that it satisfies \eqref{eq:eps_0-1} and \eqref{eq:eps_0-2}. The induction step is completed. This finishes the proof of the lemma.
\end{proof}

To deal with variable constrained sets $K_{(x,t)}$, one needs a stability result.

\begin{Lemma}(\cite[Lemma 1]{CGSW14})\label{Lemma:Stability}
	Let $K$ be a compact set. For any compact set $\mcC \subset \intco K$ there exists a $\delta >0$ depending on $\mcC$ and $K$ such that for any compact set $L$ with 
	\[d_H(K,L) < \delta \quad \text{(in Hausdorff distance)}\]
	it holds that
	\[ \mcC \subset \intco L.\]
\end{Lemma}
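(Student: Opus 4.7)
The cleanest approach here is to use the support function characterization of convex compact sets, which makes the continuous dependence of the convex hull on the generating set transparent. Let $s$ denote the dimension of the ambient vector space (so $K, L \subset \R^s$), and for a bounded set $C \subset \R^s$ let $h_C(u) = \sup_{x \in C} u \cdot x$ denote its support function on the unit sphere $S^{s-1}$.

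The plan is the following. First, since $\mcC$ is compact and $\mcC \subset \intco K$, a standard compactness argument produces an $\eta > 0$ such that $B_\eta(c) \subset \text{conv } K$ for every $c \in \mcC$. In terms of support functions this is equivalent to
\[ u \cdot c + \eta \leq h_{\text{conv } K}(u) \quad \text{for all } c \in \mcC \text{ and } u \in S^{s-1}. \]
Second, I would use the two standard identities $h_K = h_{\text{conv } K}$ (the supremum of a linear function over a set equals the supremum over its convex hull) and $d_H(C_1, C_2) = \|h_{C_1} - h_{C_2}\|_{L^\infty(S^{s-1})}$ for compact convex sets $C_1, C_2$. These combine to give the $1$-Lipschitz estimate
\[ d_H(\text{conv } K, \text{conv } L) = \|h_K - h_L\|_{L^\infty(S^{s-1})} \leq d_H(K, L). \]

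Now I would set $\delta := \eta/2$. If $L$ is any compact set with $d_H(K, L) < \delta$, the estimate above gives $h_{\text{conv } L}(u) \geq h_{\text{conv } K}(u) - \delta$ for all $u \in S^{s-1}$. Combining with the first step, for any $c \in \mcC$ and any $u \in S^{s-1}$,
\[ u \cdot c + \tfrac{\eta}{2} = u \cdot c + (\eta - \delta) \leq h_{\text{conv } K}(u) - \delta \leq h_{\text{conv } L}(u), \]
which is exactly the support-function statement of $B_{\eta/2}(c) \subset \text{conv } L$. Hence $c \in \intco L$, completing the proof.

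The only step that needs a moment of care is the production of the uniform $\eta$ in the first step: it follows from the fact that $c \mapsto \text{dist}(c, \R^s \setminus \intco K)$ is continuous and strictly positive on the compact set $\mcC$. Everything else is a routine manipulation of support functions, so I do not anticipate any serious obstacle. An alternative route would be to cover $\mcC$ by finitely many open simplices with vertices in $K$ (via Carathéodory, as in the proof of Lemma \ref{Lemma:FSC}) and use openness of affine independence under perturbations of vertices; this is more combinatorial but avoids support functions if one prefers to stay within the framework already set up in the paper.
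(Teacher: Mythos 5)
The paper does not prove this lemma; it is stated as a citation to \cite[Lemma 1]{CGSW14}, so there is no internal proof to compare against. Your support-function argument is correct and self-contained: the three ingredients you use---the existence of a uniform $\eta$ with $B_\eta(c)\subset\operatorname{conv} K$ for $c\in\mcC$ (from compactness of $\mcC$ and continuity of $c\mapsto\operatorname{dist}(c,\R^s\setminus\intco K)$), the identity $h_K=h_{\operatorname{conv} K}$, and the estimate $\|h_K-h_L\|_{L^\infty(S^{s-1})}\le d_H(K,L)$ (which follows directly from $K\subset L+\overline{B}_{d_H}$ and $L\subset K+\overline{B}_{d_H}$, without even invoking the exact-equality characterization of $d_H$ for convex sets)---fit together cleanly, and the conclusion $\overline{B}_{\eta/2}(c)\subset\operatorname{conv} L$ indeed gives $c\in\intco L$. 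One small remark: you only need the one-sided inequality $h_{\operatorname{conv}L}\ge h_{\operatorname{conv}K}-\delta$, which is immediate from $K\subset L+\overline{B}_\delta$, so the full Hausdorff-distance/support-function equality is more machinery than strictly required. The alternative route you sketch (cover $\mcC$ by open simplices with vertices in $K$ and perturb the vertices) is closer in spirit to the combinatorial arguments the paper already uses in Lemmas \ref{Lemma:FSC} and \ref{Lemma:FiniteValue}, and is also the style of argument one would expect in the cited reference; the support-function approach is slicker and gives an explicit quantitative $\delta=\eta/2$, whereas the simplex approach stays elementary and avoids introducing support functions into the paper's toolkit. Either is a valid substitute for the external citation.
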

In view of the above two lemmas, the following key perturbation property holds.
\begin{Lemma}\label{Lemma:OneStage}
	Suppose that $(\rho,w,q)$ is  a strict subsolution in a bounded open set $\mathcal{D}$.  Given  $\varepsilon>0$, there exists a compact set $\mcC\subset \mcD$ and a   sequence $\{w_k\}$ such that $(\rho,w_k,q)$ are strict subsolutions  in $\mathcal{D}$ and $w_k-w \in C_c^\infty(\mcD), \text{supp}(w_k - w) \subset \mcC, w_k \to w$ in $CL^\infty_{w*}$. Furthermore, the following estimates hold 
	\begin{equation}
	\int_\mathcal{D} \text{dist}(w_k(x,t),K_{(x,t)}) dxdt \leq \varepsilon. \label{w_k}
	\end{equation}
	
\end{Lemma}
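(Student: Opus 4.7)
The plan is to build each $w_k$ as $w$ plus a finite sum of compactly supported, mean-zero perturbations produced by Lemma~\ref{Lemma:BB} on a fine partition of a compact subset $\mcC\subset\mcD$. Since $w$ and $K_{(x,t)}$ are bounded on $\mcD$, I first fix $\mcC\subset\mcD$ compact with $|\mcD\setminus\mcC|$ small enough that $\int_{\mcD\setminus\mcC}\text{dist}(w,K_{(\cdot,\cdot)})\,dxdt<\varepsilon/2$; no perturbation is introduced outside $\mcC$, so the contribution from there is pre-controlled. On $\mcC$ I partition (up to measure zero) into disjoint open space-time cubes $\{Q_{h_k}(z_j^{(k)})\}_{j=1}^{N_k}$ of common edge length $h_k$. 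Uniform continuity of $w$ and of $(x,t)\mapsto K_{(x,t)}$ on $\overline{\mcC}$, together with Lemma~\ref{Lemma:Stability}, allows me to take $h_k$ small enough that on each cube $\text{osc}(w)<\eta_k$, the Hausdorff variation $d_H(K_{(x,t)},K_{z_j^{(k)}})<\eta_k$, and any $\eta_k$-neighborhood of a given compact subset of $\intco K_{z_j^{(k)}}$ still lies inside $\intco K_{(x,t)}$ throughout the cube.

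On each cube I invoke Lemma~\ref{Lemma:BB}, whose proof uses only Lemma~\ref{Lemma:LPW} on general open sets and thus extends verbatim from $Q_1$ to any open cube, with the constant value $\bar w=w(z_j^{(k)})\in\intco K_{z_j^{(k)}}$, $\tilde K=K_{z_j^{(k)}}$, and error parameter $\varepsilon/(2N_k)$. This produces $\tilde w_{j,k}\in C_c^\infty(Q_{h_k}(z_j^{(k)}))$ with $\mathscr{L}\tilde w_{j,k}=0$, $\tilde w_{j,k}(\cdot,t)\in\mcL(\R^n)$, $w(z_j^{(k)})+\overline{\text{image}(\tilde w_{j,k})}\subset\intco K_{z_j^{(k)}}$, and $\int\text{dist}(w(z_j^{(k)})+\tilde w_{j,k},K_{z_j^{(k)}})<\varepsilon/(2N_k)$. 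Set $w_k=w+\sum_{j=1}^{N_k}\tilde w_{j,k}$. Then $w_k-w\in C_c^\infty(\mcD)$ with support in $\mcC$, and $\mathscr{L}w_k=\mathscr{L}w$ by linearity, so the subsolution PDE \eqref{eq:subsolutonPDE1}--\eqref{eq:subsolutonPDE2} is preserved. For the strict constraint on $Q_{h_k}(z_j^{(k)})$, I write
\[ w_k(x,t) = [w(z_j^{(k)})+\tilde w_{j,k}(x,t)] + [w(x,t)-w(z_j^{(k)})]. \]
The bracketed sum lies in a compact subset of $\intco K_{z_j^{(k)}}$ and the tail has norm less than $\eta_k$, so the stability step above places $w_k(x,t)\in\intco K_{(x,t)}$, making $(\rho,w_k,q)$ a strict subsolution in $\mcD$.

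For the weak-$*$ convergence $w_k\to w$ in $C_{loc}(\bRp;L^\infty_{w*}(\R^n))$, testing against $\phi\in C_c^1(\R^n)$ and using the spatial mean-zero property cube by cube gives
\[ \Bigl|\int(w_k-w)(x,t)\phi(x)\,dx\Bigr| \leq \|\nabla\phi\|_\infty\,h_k\,\|w_k-w\|_{L^\infty_t L^1_x}, \]
which is uniform in $t$ and vanishes as $h_k\to 0$; the uniform $L^\infty$ bound on $w_k-w$ combined with density of $C_c^1$ in $L^1$ extends convergence to arbitrary $\phi\in L^1(\R^n)$. The error bound follows from the per-cube inequality
\[ \text{dist}(w_k,K_{(x,t)}) \leq \text{dist}(w(z_j^{(k)})+\tilde w_{j,k},K_{z_j^{(k)}}) + |w(x,t)-w(z_j^{(k)})| + d_H(K_{z_j^{(k)}},K_{(x,t)}), \]
which after summing yields $\sum_j \varepsilon/(2N_k)+2\eta_k|\mcC|\leq\varepsilon/2$ provided $\eta_k\leq\varepsilon/(8|\mcC|)$, and combining with the $\varepsilon/2$ from $\mcD\setminus\mcC$ gives \eqref{w_k}.

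The main obstacle is coordinating the parameter hierarchy: $h_k$ must be small enough for the stability radius from Lemma~\ref{Lemma:Stability} to absorb both the Hausdorff variation of $K_{(x,t)}$ and the oscillation of $w$ on each cube, while $h_k\to 0$ is what drives weak-$*$ convergence, and the per-cube error budget $\varepsilon/(2N_k)$ with $N_k$ of order $h_k^{-(n+1)}$ must still be achievable from Lemma~\ref{Lemma:BB}. These demands are compatible because Lemma~\ref{Lemma:BB} attains any prescribed error level on any open cube, and because the mean-zero property $\tilde w_{j,k}(\cdot,t)\in\mcL(\R^n)$ decouples the amplitude of each perturbation from the weak-convergence rate, which depends only on the support scale $h_k$.
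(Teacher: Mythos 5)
The overall architecture you propose — partition $\mcC$ into small cubes, apply Lemma~\ref{Lemma:BB} cube by cube, use Lemma~\ref{Lemma:Stability} to absorb the variation of $K_{(x,t)}$, and exploit the spatial mean-zero property and shrinking cube size for weak-$*$ convergence — is the right shape, and the error-budget bookkeeping and the weak-$*$ convergence step match the paper's argument. However, there is a genuine logical gap at the stability step, and it is precisely the point the paper organizes its proof around.

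You write that ``Uniform continuity of $w$ and of $(x,t)\mapsto K_{(x,t)}$ on $\overline{\mcC}$, together with Lemma~\ref{Lemma:Stability}, allows me to take $h_k$ small enough that \ldots any $\eta_k$-neighborhood of a given compact subset of $\intco K_{z_j^{(k)}}$ still lies inside $\intco K_{(x,t)}$ throughout the cube.'' But Lemma~\ref{Lemma:Stability} produces a stability radius $\delta$ that depends on the compact set $\mathcal{C}$, and in your construction the relevant compact set is $w(z_j^{(k)})+\overline{\text{image}(\tilde w_{j,k})}$, which is produced by Lemma~\ref{Lemma:BB} \emph{only after} the cube $Q_{h_k}(z_j^{(k)})$ (hence $h_k$) has been fixed. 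So $h_k$ must be chosen small relative to a quantity that is not available until after $h_k$ has been chosen: the quantifiers are circular. There is no uniform lower bound on the stability radius over the continuum of cube centers, because $\overline{\text{image}(\tilde w_{j,k})}$ is not a canonical function of $z_j^{(k)}$ (the construction in Lemma~\ref{Lemma:BB} involves choosing a finite subset of $\tilde K$, a nested family $L_\beta$, and an induction), so there is no continuity-in-$z$ you can invoke to run a compactness argument.

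The paper breaks this circularity by fixing a \emph{finite} set of perturbation profiles first. For finitely many base points $\zeta_1,\dots,\zeta_N$, it applies Lemma~\ref{Lemma:BB} on the unit cube $Q_1$ to get $v_i\in C_c^\infty(Q_1)$ with $w(\zeta_i)+\overline{\text{image}(v_i)}\subset\intco K_{\zeta_i}$. Each $v_i$ has a definite stability radius $r(\zeta_i)$, which determines a covering $\{\mcO^i=Q_{r(\zeta_i)}(\zeta_i)\}_{i=1}^N$ of a compact exhaustion $\mcC'$ of $\mcD$. A Whitney decomposition (followed by further dyadic refinement as $k$ grows) produces disjoint cubes $\{Q^j\}$, each lying in one $\mcO^{i(j)}$, and the final perturbation on $Q^j$ is the \emph{rescaled} profile $a_j(z)=v_{i(j)}((z-z_j)/r_j)$ — whose image, and hence whose stability radius, is unchanged under rescaling. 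Thus the stability radius is pinned down \emph{before} any cube decomposition is chosen, and only the rescaling (which is scale-invariant for the image and for the normalized error) depends on $k$. If you want to repair your argument along your own lines, you would need to introduce this finite covering and the rescaling step; as written, the stability inclusion $w_k(x,t)\in\intco K_{(x,t)}$ is not established.

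A secondary, more technical point: a general compact $\mcC\subset\mcD$ cannot be partitioned up to measure zero into cubes of \emph{common} edge length $h_k$ for a sequence $h_k\to 0$ unless $\mcC$ is built from a fixed dyadic grid. The Whitney decomposition (producing cubes of \emph{varying} sizes, each contained in one covering set) sidesteps this; if one insists on uniform edge length, one should first replace $\mcC$ by a finite union of grid-aligned cubes. This is fixable, but your proof does not address it.
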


\begin{proof}
	Set $C_0=2\sup_{z \in \mcD} \mathfrak{C}(\rho(z),q(z))$ with $\mathfrak{C}(\rho,q)$ defined in \eqref{ineq:Kbdd}.
	 Approximate $\mathcal{D}$ by a compact subset $\mcC'$ such that $\mcH^{n+1}(\mcD \setminus \mcC')<\frac{\eps}{4C_0}$.
	For $\zeta \in \mathcal{D}$, applying Lemma \ref{Lemma:BB} to $w(\zeta) \in \intco K_{\zeta}$ yields there exists $v \in C_c^\infty(Q_1)$ satisfying \eqref{eq:Lemma6-P1}
	and  
	\begin{equation}
	w(\zeta)+\overline{\text{image}(v)}\subset int \ conv \ K_{\zeta},\quad \int_{Q_1}\text{dist}(w(\zeta)+v(z),K_{\zeta})dz < \frac{\eps}{4|\mcD|}. \label{ineq:z_0}
	\end{equation}  
	It follows from Lemma \ref{Lemma:Stability} and the continuity of $w(z)$ and $K_{z}$ that  there exists a $r(\zeta)>0$ such that  
	\begin{equation}
	w(z)+\overline{\text{image}(v)}\subset int \ conv \ K_{z} \quad \text{ for }z \in Q_{r(\zeta)}(\zeta). \label{K_z}
	\end{equation}
	Furthermore, by shrinking  $r(\zeta)$ if necessary, the following estimates hold
	\begin{equation}
	d_H(K_z,K_{\zeta}) < \frac{\eps}{8|\mcD|} \quad \text{ and } \quad  |w(z)-w(\zeta)|<\frac{\eps}{8|\mcD|} \quad \text{ for }z \in Q_{r(\zeta)}(\zeta). \label{ineq:z}
	\end{equation} 
	Let $\{\mcO^i=Q_{r(\zeta_i)}(\zeta_i)\}_{i=1}^N$ be a finite covering of the compact set $\mcC'$ such that $\cup \overline{\mcO^i} \subset \mcD$, and let $v_i \in C_c^\infty(Q_1)$ be associated with $\zeta_i$. Set $\mcC:=\cup \overline{\mcO^i}$. Set $r_0 = \frac{1}{2}\min\limits_{i} r(\zeta_i)$. 
	
	Apply the Whitney covering lemma \cite{Stein70} to the open sets $\mcO_1$ and $\{\mcO_i \setminus (\bigcup_{j=1}^{i-1} \overline{\mcO_j} )\}_{i=2}^N$, there exist a family of disjoint open cubes $\{\tilde{Q}^{l}\}_{l=1}^\infty$ with $\mathcal{H}^{n+1}(\mcC \setminus (\cup_l \tilde{Q}^l)) = 0$, each $\tilde{Q}^{l}$ lying in some $\mcO^{i(l)}$. For $k=1,2,\cdots$, decomposing the cubes $\tilde{Q}^{l}$ further if necessary, it follows that there exist finitely many disjointed open cubes $\{Q^j=Q_{r_j}(z_j)\}_{j=1}^{J_k}$, where $r_j =r_j^{(k)}, z_j = z_j^{(k)}$ depending on $k$, satisfying
	\[ \max_{1\leq j\leq J_k}r_j < 2^{-k}r_0, \quad \mathcal{H}^{n+1}(\mcC \setminus (\bigcup_{j=1}^{J_k} Q^{j}))<\frac{\eps}{4C_0},  \] 
	and for each $j=1,\cdots, J_k$, $Q^j \subset \mcO^{i(j)}$ for some $i(j)$. 
	 
	Set $a_j(z) = v_{i(j)}(\frac{z-z_j}{r_j}) \in C_c^\infty(Q^{j})$. Let $\tilde{w}_k(z)= \sum_{j=1}^{J_k}a_j(z)$ and set
	\[ w_k(z) = w(z) + \tilde{w}_k(z) = w(z) + \sum_{j=1}^{J_k}a_j(z). \]
	It is clear that $\tilde{w}_k \in C_c^\infty(\mcD)$ and $\text{supp}\tilde{w}_k \subset \cup_j Q^j \subset \mcC$. Since  $\mathscr{L}a_j= 0$, it holds that $(\rho,w_k,q)$ solves \eqref{eq:subsolutonPDE1} and \eqref{eq:subsolutonPDE2}.  It follows from \eqref{K_z} and $ Q^j \subset \mcO^{i(j)}$ that for $z \in Q^j$,
	\[ w_k(z) = w(z) + a_j(z) \in w(z)+ \text{image}(v_{i(j)})\subset int \ conv \ K_{z}. \] 
	For $z \in \mcD  \setminus (\cup_j Q^j)$, $w_k(z) = w(z) \in \intco K_z$. Therefore $(\rho,w_k,q)$ is a strict subsolution in $\mcD$. 
	
	Substituting $z = z_j + r_j z'$, it follows from \eqref{ineq:z_0} and \eqref{ineq:z} that
	\begin{align*}
	\int_{Q^{j}}\text{dist}(w_k(z),K_z)dz &= (r_j)^{n+1} \int_{Q_1}\text{dist}(w(z(z'))+a_j(z(z')),K_{z(z')})dz' 	\\
	& \leq (r_j)^{n+1} \int_{Q_1}\text{dist}(w(\zeta_{i(j)})+v_{i(j)}(z'),K_{\zeta_{i(j)}})+\frac{\eps}{4|D|}dz'\\
	& \leq   (r_j)^{n+1} \frac{\eps}{2|D|} = \frac{|Q^{j}|}{2|D|}\eps.
	\end{align*}
	Thus it holds that
	\begin{align*}
	\int_\mathcal{D} \text{dist}(w_k(z),K_z) dz&= \sum_{j=1}^{J_k} \int_{Q^j}\text{dist}(w_k(z),K_z) dz + \int_{\mcD \setminus (\cup_j Q^j) }\text{dist}(w_k(z),K_z) dz\\
	&\leq |\mcD|\frac{\eps}{2|D|} + C_0 |\mcD \setminus (\cup_j Q^j) |  \leq \frac{\eps}{2} + C_0 |\mcD \setminus \mcC' | \\
	&\leq \eps.
	\end{align*}
	Let $\msQ^{j}$ be the projection of $Q^j$ on $\R^n$ and denote by $z_j=(x_j,t_j)$. Notice that
	\begin{equation}
	a_j \in  C_c^\infty(Q^{j}), \quad \int a_j(x,t)dx=0, \quad \|a_j\|_{L^\infty} \leq 2C_0. \label{a_j}
	\end{equation}
	For $\phi \in C_c^\infty(\R^n)$, it holds that
	\begin{align*}
	\left|\int_{\R^n} \tilde{w}_k \phi dx\right| &= \left|\sum_{j=1}^{J_k}\int_{Q^j} a_j\phi dx\right| = \left|\sum_{j=1}^{J_k}\int_{Q^j}a_j(x,t)(\phi(x)-\phi(x_j)) dx\right| \\
	&\leq 2C_0 |\supp \phi| \max_{ x \in \msQ^{(j)}}|\phi(x)-\phi(x_j)|.
	\end{align*}
	Since $\text{diam}(\msQ^j)\leq \sqrt{n} 2^{-k}r_0$, one has 
	\[ \left|\int \tilde{w}_k\phi\, dx\right|\to 0 \quad \text{uniformly in } t \text{ as } k\to \infty. \]  It follows from density arguments that $\tilde{w}_k \to 0$ in $CL^\infty_{w*}$. This completes the proof of the lemma.
\end{proof}

\begin{proof}[Proof of Proposition \ref{Prop:ExSol}] 
	 Denote $\underline{w}=(\underline{\m},\underline{\U})$. Define
	 \begin{equation*}
	 X^0 = \{w' \in C_{loc}(\bRp;L^\infty_{w*}) : (\rho,w',q) \text{ are strict subsolutions  in } \mcD \text{ and }w'-\underline{w} \in C_c^{\infty}(\mcD)  \}.
	 \end{equation*}  
	 Since $\underline{w} \in X^0$, $X^0$ is non-empty.	It follows from \eqref{ineq:Kbdd} that $X^0$ is a bounded set in $L^\infty(\R^n \times \Rp)$. Let $X$ be the completion of $X^0$ in $C_{loc}(\bRp;L^\infty_{w*}(R^n))$ topology. Observe that $X$ is metrizable due to the Banach-Alaoglu theorem and hence there is a metric $d_X$ such that $(X,d_X)$ forms a complete metric space (see \cite{dLSz2}). It will be shown that the set of $w=(\m,\U)$ satisfying \eqref{eq:mU}  is a dense set in $X$.
	
 Let $\{\mcD_i\}_{i=1}^\infty$ be a family of bounded open subsets of $\mcD$  such that $\mcD_1 \subset \mcD_2 \subset  \cdots, $ and $\mcD = \cup \mcD_i$. In the case that $\mcD$ is bounded, one can take $\mcD_j = \mcD$ for $j=1,2,\cdots$. 
 
 Given $w' \in X^0$ and $\eps_0>0$. Set $w_1=w'$. Assume that for some $k \geq 1$, $w_k \in X^0$ has been obtained. Then $(\rho,w_k,q)$ is a strict subsolution in $\mcD_{k+1}$ and thus $w_k(z) \in \intco K_z$ for $z \in D$. It follows that $\int_{D_1} \text{dist}(w_k(z),K_z)dz > 0$. Applying
	 Lemma \ref{Lemma:OneStage} to $w_k$ with $\mcD_{k+1}$, there exists a sequence  $\{v_i\}$ satisfying that  $(\rho,v_i,q)$ are strict subsolutions  in $\mcD_{k+1}$, with 
	 \[ v_i-w_k \in C_c^\infty(\mcD_{k+1}), \quad v_i \to w_k \text{ in } CL^\infty_{w*} \quad \text{ as } i \to \infty, \] 
	 and obey the estimates 
	\begin{equation}
	\int_{\mcD_{k+1}} \text{dist}(v_i(z),K_z) dz \leq \min\left\{2^{-k},\frac{1}{2}\int_{\mcD_1} \text{dist}(w_{k}(z),K_z) dz\right\}. \label{ineq:dist_wj}
	\end{equation}
	Since $v_i(z) \in \intco K_z$ for $z \in \mcD_{k+1}$  and  $\supp (v_i - w_k) \subset \mcD_{k+1} $, it follows that $v_i(z) \in \intco K_z$ for $z\in \mcD$. It is easy to see  that $(\rho,v_i,q)$ are strict subsolutions  in  $\mcD$. Since $v_i - \underline{w} = (v_i - w_k) + (w_k - \underline{w}) \in C_c^{\infty}(\mcD)$, it holds that $v_i \in X^0$. Since $v_i \to w_k$ in $CL^\infty_{w*}$ as $i \to \infty$, it is easy to see that for $i$ sufficiently large, 
	\begin{equation}
	  \quad \max_{1\leq j \leq k}\left|\int_{D_j} (v_{i}-w_k)w_k dz\right| < \min\left\{2^{-k},\frac{1}{100|\mcD_k|}\left(\int_{\mcD_1} \text{dist}(w_k(z),K_z) dz\right)^2\right\}.\label{ineq:incre}
	\end{equation}
	Recalling that $d_X$ is induced by the topology of $CL^\infty_{w*}$, for  $i$ sufficiently large it holds that
	\begin{equation}
	d_X(v_i,w_k) < 2^{-k}\eps_0. \label{d_X}
	\end{equation}
	Set $w_{k+1} = v_i$ for some large $i$ such that $w_{k+1}$ satisfies \eqref{ineq:incre} and \eqref{d_X}. Thus we obtain a sequence $\{w_k\} \subset X^0$ satisfying \eqref{ineq:dist_wj},  \eqref{ineq:incre}, and \eqref{d_X} with  $v_i$ replaced by $w_{k+1}$ for $k \geq 1$.
	 
	It follows from \eqref{d_X} that $\{w_k\}$ is a Cauchy sequence in $X$ and thus $w_k$ converges to some $w \in X$. We now show that $\{w_k\}$ converges strongly in $L^2(D_j)$, for any fixed $j$.
	
	Fix any $j \geq 1.$ For $k \geq j$, since $\mcD_1 \subset \mcD_j \subset \mcD_{k+1}$, it follows from \eqref{ineq:dist_wj} that 
	\begin{align*}
	\int_{D_j} |w_{k+1}-w_k|dz &\geq \int_{D_j} \text{dist}(w_k,K_z)dz - \int_{D_j} \text{dist}(w_{k+1},K_z)dz \\
	&\geq \int_{D_j} \text{dist}(w_k,K_z)dz - \int_{D_{k+1}} \text{dist}(w_{k+1},K_z)dz \\
	&\geq \frac{1}{2} \int_{D_j} \text{dist}(w_k,K_z)dz.
	\end{align*}
	Thus applying H\"{o}lder's inequality yields 
	\[ \|w_{k+1} - w_{k}\|_{L^2(D_j)}^2 \geq \frac{1}{|\mcD_j|}\left(\int_{D_j} |w_{k+1}-w_k|dz\right)^2 \geq  \frac{1}{4|\mcD_j|}\left(\int_{D_j} |w_{k+1}-w_k|dz\right)^2. \]
	This together with \eqref{ineq:incre}, yields that
	\begin{align*}
	\|w_{k+1}\|_{L^2(D_j)}^2 - \|w_{k}\|_{L^2(D_j)}^2 &= \|w_{k+1} - w_{k}\|_{L^2(D_j)}^2 - 2\int_{D_j} (w_{k}-w_{k+1})w_k dz \geq 0.
	\end{align*}
	Thus $\{\|w_{k}\|_{L^2(D_j)}^2\}_{k=j}^\infty$ is a non-decreasing sequence. Since  $\|w_{k}\|_{L^2(\mcD_j)} \leq C(\rho,q)|\mcD_j|$ is uniformly bounded, $\{\|w_{k}\|_{L^2(D_j)}^2\}_{k=j}^\infty$ is a Cauchy sequence. 
	It follows from \eqref{ineq:incre} that for $k > m \geq j$, the following estimates hold
	\begin{align*}
	\|w_k - w_m\|_{L^2(D_j)}^2 &\leq 2(\|w_k - w_{k-1}\|_{L^2(D_j)}^2+\cdots + \|w_{m+1} - w_m\|_{L^2(D_j)}^2)\\
	& = 2\sum_{l=m}^{k-1}\left(\|w_{l+1}\|_{L^2(D_j)}^2 - \|w_{l}\|_{L^2(D_j)}^2 -  2\int_{\mcD_j} (w_{l+1}-w_l)w_l dz\right) \\
	&\leq 2(\|w_k\|_{L^2(D_j)}^2-\| w_m\|_{L^2(D_j)}^2) + 2^{-m+3}.
	\end{align*}
	Hence $w_k$ converges strongly in $L^2(\mcD_j)$. Then one can conclude from  \eqref{ineq:dist_wj} and \eqref{d_X} that  
	\[ w(z) \in K_{z} \quad \text{a.e. in }\mcD \quad \text{ and } \quad d_X(w,w') \leq \eps_0.\]	  
	Since the scheme holds for any $w' \in X^0$ and $\eps>0$,  the set of $w=(\m,\U)$ satisfying \eqref{eq:mU}  is a dense set in $X$. Since $w \notin X^0$, $\{w_k\}$ is an infinite sequence in $X^0$, and thus a dense set in $X$ is also infinite. The proof of the proposition is finished.
\end{proof}

As in \cite{dLSz2,Chiodaroli,Feireisl14}, it is possible to construct weak solutions with initial data satisfying \eqref{eq:ModifiedSol}, which are useful for constructing admissible solutions.  It is straightforward to extend Definition \ref{def:subsolu} to the cases where the time domain $\bRp$ is replaced by $[-\tau,\infty)$ for some $\tau \geq 0.$

\begin{Corollary}\label{Cor:Sol}
	Suppose that $(\rho,\underline{\m},\underline{\U},q)$ is  a strict subsolution in $\mathcal{D} \subset \R^n \times [-\tau,\infty)$ for some $\tau>0$,  such that $\mathcal{D}_{t=0} \subset \R^n$ is a non-empty open set and $\rho(\cdot,0) = \rho_0$ with
	\begin{align}
		|\underline{\m} |^2(x,0) & = n \rho_0(x)q (x,0),  \quad &\text{ for a.e. } x \in (\R^n  \setminus \mcD_{t=0}), \label{eq:OutsideDI}\\ 
		(\underline{\m},\underline{\U})(x,t) & \in K_{\rho(x,t),q(x,t)}, \quad &\text{ for a.e. } (x,t) \in (\R^n \times \Rp \setminus \mcD). \label{eq:OutsideD}
	\end{align}
	Then there exists an $\m^\diamond \in L^\infty(\R^n)$ with
	\begin{align}
		\m^\diamond(x) &\in (K_{(x,0)}|_{\R^n}),  &\text{ for a.e. } x \in D_{t=0}, \label{eq:FSI}\\
		\text{ and \quad }|\m^\diamond |^2(x)&= n \rho_0(x)q (x,0),  &\text{ for a.e. } x \in \R^n,\label{eq:ModifiedSol}
	\end{align}
	such that the Cauchy problem \eqref{eq:CES} and \eqref{IC} has infinitely many weak solutions $(\rho,\m^\flat)$  satisfying
	\begin{align}
	\m^\flat(x,t) &\in  (K_{(x,t)}|_{\R^n}),   &\text{ for a.e. } (x,t) \in \mathcal{D}, \label{eq:FS}\\
	\text{ and } \quad |\m^\flat|^2(x,t) &= n \rho(x,t) q(x.t), &\text{ for a.e. } (x,t) \in \R^n \times \Rp, \label{eq:MSEnergy}
	\end{align}
	where $K_{(x,t)}|_{\R^n}=\{\n \in \R^n:(\n,\V) \in K_{(x,t)} \text{ for some } \V \in \S_0^{n \times n} \}$.
	
	Furthermore, there exists a sequence of divergence-free vector fields $\{\tilde{\m}_j\}$ such that
	\begin{equation}
	\tilde{\m}_j \in C_c^\infty(\mcD) \quad \text{ and } \quad \underline{\m}+\tilde{\m}_j \to \m^\flat \text{ in } C_{loc}(\bRp;L^\infty_{w*}(\R^n)). \label{m_j}
	\end{equation}
\end{Corollary}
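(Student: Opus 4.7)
The plan is to apply Proposition \ref{Prop:ExSol} to the strict subsolution $(\rho,\underline{\m},\underline{\U},q)$ on $\mcD$, identify the resulting subsolutions as weak solutions of the Euler system, and then construct a common initial datum by exploiting the flexibility of the convex-integration iteration.

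First I would invoke Proposition \ref{Prop:ExSol} (extended to the time domain $[-\tau,\infty)$ as indicated before the corollary) to obtain infinitely many subsolutions $(\rho,\m,\U,q)$ with $(\m,\U)(x,t) \in K_{(x,t)}$ a.e.\ in $\mcD$ and $\supp(\m-\underline{\m},\U-\underline{\U}) \subset \overline{\mcD}$. Since by definition of strict subsolution $K_{(x,t)} \subset K_{\rho(x,t),q(x,t)}$, the characterization \eqref{eq:eqDefK} yields $\U = \m\otimes\m/\rho - qI$ and $|\m|^2 = n\rho q$ a.e.\ in $\mcD$; combined with \eqref{eq:OutsideD} outside $\mcD$, both identities hold a.e.\ in $\R^n \times [-\tau,\infty)$. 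Plugging this expression for $\U$ into \eqref{eq:subsolutonPDE2}, the contributions $-\nabla q$ from $\nabla\cdot(-qI)$ and $+\nabla q$ from $\nabla(p(\rho)+q)$ cancel, so the reformulated system collapses exactly to the Euler system \eqref{eq:CES}. Hence each $(\rho,\m|_{t\geq 0})$ is a weak solution on $\R^n \times \bRp$, and \eqref{eq:FS}, \eqref{eq:MSEnergy} follow from the pointwise identities.

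Next, to obtain a common initial datum and infinitely many solutions sharing it, I would pick one specific weak solution $\m^{(1)}$ produced above and set $\m^\diamond := \m^{(1)}(\cdot,0)$, which is well-defined in $L^\infty(\R^n)$ because the convex-integration iterates lie in $C_{loc}([-\tau,\infty); L^\infty_{w*}(\R^n))$. Outside $\mcD_{t=0}$, the support condition gives $\m^\diamond = \underline{\m}(\cdot,0)$, so \eqref{eq:OutsideDI} provides $|\m^\diamond|^2 = n\rho_0 q(\cdot,0)$ there. On $\mcD_{t=0}$, the strong $L^2$ convergence of the iteration on bounded subdomains of $\mcD$ established in the proof of Proposition \ref{Prop:ExSol}, which cover an open neighborhood of $\mcD_{t=0}$ since $\mcD$ is open in $\R^n\times[-\tau,\infty)$, permits transferring the pointwise constraint $\m^{(1)}(x,t) \in K_{(x,t)}|_{\R^n}$ to $t = 0$ after passing to a diagonal subsequence, giving \eqref{eq:FSI} and hence \eqref{eq:ModifiedSol}. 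To produce infinitely many weak solutions sharing this $\m^\diamond$, I would then apply Proposition \ref{Prop:ExSol} a second time to a strict subsolution defined on $\mcD^+ := \mcD \cap (\R^n\times(0,\infty))$ obtained by perturbing $(\m^{(1)},\U^{(1)})$ slightly back into $\intco K_{(x,t)}$ for $t>0$; since all subsequent perturbations are supported in $\mcD^+$, the initial trace at $t=0$ remains $\m^\diamond$ for every member of the resulting infinite family. The divergence-free approximation \eqref{m_j} is inherited from the iteration: each $\tilde{\m}_j := \m_j - \underline{\m} \in C_c^\infty(\mcD)$ solves $\mathscr{L}(\tilde{\m}_j,\tilde{\U}_j)=0$, whose first equation $\nabla\cdot\tilde{\m}_j = 0$ is exactly the divergence-free condition.

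The principal obstacle is the second step: verifying that $\m^\diamond$ lies in the \emph{non-convex} constraint set $K_{(x,0)}|_{\R^n}$ a.e.\ on $\mcD_{t=0}$, rather than merely in its convex hull, since $L^\infty$ weak* limits do not preserve pointwise non-convex constraints in general. This is handled by the strong $L^2$ convergence of the convex-integration scheme together with the fact that $\mcD_{t=0}$ lies in the interior of $\mcD$ with respect to $\R^n \times [-\tau,\infty)$; the strong convergence yields pointwise a.e.\ convergence on subsequences, transferring the a.e.\ constraint on an open time-neighborhood of $\{t=0\}$ to the trace.
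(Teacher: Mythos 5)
Your overall plan—first use Proposition~\ref{Prop:ExSol} to reach the constraint set and then apply it a second time to freeze the $t=0$ trace—has the right two-stage shape, but the middle step of your argument has a genuine gap that the paper's proof is specifically designed to avoid.

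The first problem is your proposed second application of Proposition~\ref{Prop:ExSol}. You suggest forming a strict subsolution on $\mcD^+=\mcD\cap(\R^n\times(0,\infty))$ ``by perturbing $(\m^{(1)},\U^{(1)})$ slightly back into $\intco K_{(x,t)}$ for $t>0$.'' This cannot work: once convex integration has driven $(\m^{(1)},\U^{(1)})$ into $K_{(x,t)}$ a.e., it is a genuinely wild $L^\infty$ function with no continuity, whereas Definition~\ref{def:subsolu} requires a strict subsolution to satisfy $(\rho,\m,\U,q)\big|_\mcD \in C^0(\mcD)$ and to lie in the \emph{open} set $\intco K_{(x,t)}$ pointwise. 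There is no ``slight perturbation'' that converts a nowhere-continuous function valued on the boundary stratum $K_{(x,t)}$ into a continuous function valued in the open interior while preserving the linear constraint $\mathscr{L}=0$ and the $t=0$ trace. The second problem is subtler: you invoke the strong $L^2(\mcD_j)$ convergence from the proof of Proposition~\ref{Prop:ExSol} to pass the non-convex constraint to the time slice $\{t=0\}$. That convergence is over space-time sets, so a.e.\ pointwise convergence of a subsequence holds a.e.\ in $(x,t)$; it gives no control on the Lebesgue-null slice $\{t=0\}$, and the trace of the limit at $t=0$ is only defined via the $C_{loc}(\bRp;L^\infty_{w*})$ topology, which does not preserve pointwise non-convex constraints.

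The paper resolves both difficulties in one move: it modifies Lemma~\ref{Lemma:OneStage} so that each perturbation stage is supported in $\Omega_0\times(-\eps,\eps)$ with $\Omega_0\subset\mcD_{t=0}$, and builds the iterates so that $w^{(k+1)}=w^{(k)}$ for $t\notin(-2^{-k},2^{-k})$ while achieving a $\mathrm{dist}$-estimate \emph{at the time slice $t=0$} directly (via \eqref{w_k-t=0}), not merely in space-time average. Then $w^{(k)}(\cdot,0)$ converges strongly in $L^2(\mcC)$ for every compact $\mcC\subset\mcD_{t=0}$, giving the a.e.\ constraint at $t=0$. Crucially, the limit $w'$ still equals $w^{(k)}$ away from a shrinking time neighborhood of zero, hence $(\rho,w',q)$ remains a \emph{strict} subsolution on $D^{t=0}=\mcD\setminus(\overline{\mcD_{t=0}}\times\{0\})$. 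Only at that point is Proposition~\ref{Prop:ExSol} applied, to $(\rho,w',q)$ on $D^{t=0}$, yielding infinitely many solutions whose compact supports lie in $D^{t=0}$ and therefore all share the trace $\m^\diamond=\m'(\cdot,0)$. To repair your proof you would need to carry out precisely this modification of Lemma~\ref{Lemma:OneStage} and run the iteration with shrinking time support \emph{before} ever producing a solution, rather than producing one first and trying to perturb it back.
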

\begin{proof} 	 Denote $\underline{w}=(\underline{\m},\underline{\U})$. Define
	\begin{equation*}
	X^0 = \{w \in C_{loc}([-\tau,\infty);L^\infty_{w*}) : (\rho,w,q) \text{ are strict subsolutions  in } \mcD \text{ and }w - \underline{w} \in C_c^{\infty}(\mcD)  \}.
	\end{equation*}  
	Given $w \in X^0, \eps>0,$ and a bounded open subset $\Omega_0\subset \mcD_{t=0}$. Modifying slightly  the  proof of Lemma \ref{Lemma:OneStage}, one can obtain a sequence of functions $\{w_k\} \subset X^0$, which satisfies $d_X(w_k,w)\to 0$  and
		\begin{equation}
		\supp(w_k-w) \subset \Omega_0 \times (-\eps,\eps), \quad \int_{\Omega_0} \text{dist}(w_k(x,0),K_{(x,0)}) dxdt \leq \varepsilon\label{w_k-t=0}.
		\end{equation}
		For $\xi \in \Omega_0$, it follows from Lemma \ref{Lemma:BB}, Lemma \ref{Lemma:Stability} and the continuity of $w(z)$ and $K_{z}$ that  there exist $v \in C_c^\infty(Q_1)$ and $r(\xi)>0$ such that $v$ satisfies \eqref{eq:Lemma6-P1} and
		\begin{equation}
		 \int_{Q_1}\text{dist}(w(\xi,0)+v(x,t),K_{\xi,0})\,dxdt < \frac{\eps}{4|\Omega_0|}. \label{ineq:z_0'}
		\end{equation}  
		Furthermore, for $z \in Q_{r(\xi)}(\xi,0)$, we have
		\begin{align*}
		w(z)+\overline{\text{image}(v)}\subset int \ conv \ K_{z}, \\
		d_H(K_z,K_{(\xi,0)}) < \frac{\eps}{8|\Omega_0|},\quad  |w(z)-w(\xi,0)|<\frac{\eps}{8|\Omega_0|}.
		\end{align*}
		Since $Q_1 = [-\frac{1}{2},\frac{1}{2}]^n \times [-\frac{1}{2},\frac{1}{2}]$, it follows  from \eqref{ineq:z_0'} that 
		\[ \int_{[-\frac{1}{2},\frac{1}{2}]^n}\text{dist}(w(\xi,0)+v(x,s),K_{(\xi_i,0)})dx < \frac{\eps}{4|\Omega_0|},\]
		for some $s \in [-\frac{1}{2},\frac{1}{2}]$. 
		Let $\mcC'$ be a compact subset of  $\Omega_0$ with $\mathcal{H}^n(\mcD_{t=0} \setminus \mcC')<\frac{\eps}{4C_0}$, where $C_0=2\sup_{ \mcD} C(\rho,q)$ with $C(\rho,q)$ defined in \eqref{ineq:Kbdd}. 
		Denote by $\msQ_{r(\xi)}(\xi)$ the space cube of length $r(\xi)$ centered at $\xi$. 
		Let $\{O^i=\msQ_{r(\xi_i)}(\xi_i)\}_{i=1}^N$ be a finite covering of $\mcC'$ with $v_i \in C_c^\infty(Q_1)$ and $s_i$ associated with $\xi_i$. Set $r_0 = \min_i r(\xi_i)$.
		For $k \in \mathbb{N}$, there exists  $\{\msQ^j = \msQ_{r_j^{(k)}}(x_j^{(k)})\}_{j=1}^{J_k}$ such that $\mathcal{H}^n(\mcC' \setminus (\bigcup_{j=1}^{J_k} \msQ^{j}))<\frac{\eps}{4C_0}$ and
		\[ r_j^{(k)} \leq \min(2^{-k}r_0,2^{-1}\eps), \quad \msQ^j \subset O^{i(j)} \text{ for some }i(j), \text{ for } j=1,\cdots,J_k. \]
		Set 
		\[ a_j(x,t) = v_{i(j)}\left(\frac{x-x_j^{(k)}}{r_j^{(k)}},\frac{t}{r_j^{(k)}}+s_{i(j)}\right) \in C_c^\infty(\msQ_{r_j}(x_j) \times (-\eps,\eps)) \] and let $w_k = w + \sum_{j=1}^{J_k} a_j$. It is easy to verify that $w_k \in X^0$ and that \eqref{w_k-t=0} holds as in the  proof of Lemma \ref{Lemma:OneStage}.
		
		Using \eqref{w_k-t=0} instead of Lemma \ref{Lemma:OneStage}, the proof of Proposition \ref{Prop:ExSol} can be  adapted to obtain $\{w^{(k)}\} \subset X^0$ with $w^{(1)} = (\underline{\m},\underline{\U})$, satisfying the estimates
		\[ w^{(k+1)}=w^{(k)} \text{ for } t \notin  (-2^{-k},2^{-k}), \quad d_X(w^{(k+1)},w^{(k)})<2^{-k},  \]
		and that for any compact subset $\mcC \subset \mcD_{t=0}$, $\{w^{(k)}(\cdot,0)\}$ is a strongly convergent sequence in $L^2(\mcC)$ and  
		\[ \int_{\mcC} \text{dist}(w^{(k)}(x,0),K_{(x,0)})dx \to 0 \text{ as } k \to \infty. \] Let $w'=(\m',\U') := \lim_k w^{(k)} \in X$. Set $\m^\diamond(x)=\m'(x,0)$. It is easy to see that
		\[(\m',\U')(x,0)\in K_{(x,0)} \quad \text{a.e.  in }\mathcal{D}_{t=0}  \quad \text{ and } \quad \supp(\m'-\underline{\m},\U'-\underline{\U}) \subset \overline
		{\mcD}.\]
		 Since $w' = w^{(k)} $ for $ t \notin (-2^{-k},2^{-k})$ for any $k \geq 1$, $(\rho,w',q)$ is a strict subsolution in $D^{t=0}$. 
	  Applying Proposition \ref{Prop:ExSol} to $(\rho,w',q)$ in $D^{t=0}$  yields infinitely many pairs 
	  \begin{equation}
	  (\m^\flat,\U^\flat) \in X,\,\, (\m^\flat,\U^\flat)(\cdot,0)=(\m',\U')(\cdot,0), \,\, (\m^\flat,\U^\flat)(x,t) \in K_{(x,t)}\ a.e. \text{ in } \mcD. \label{Sol}
	  \end{equation}
	It is clear that \eqref{eq:FSI} and \eqref{eq:FS} are satisfied. Since $K_{(x,t)} \subset K_{\rho(x,t),q(x,t)}$, it is easy to see that \eqref{eq:ModifiedSol} and \eqref{eq:MSEnergy} follow from  \eqref{eq:OutsideDI} and \eqref{eq:OutsideD}. 
	
	Since $(\m^\flat,\U^\flat) \in X$, there exist $(\m_j,\U_j) \in X^0$ converging to $(\m^\flat,\U^\flat)$ in $X$. Let $\tilde{\m}_j = \m_j - \underline{\m}$. It follows from the definition of $X^0$ that one has $\tilde{\m}_j \in C_c^\infty(\mcD)$. Since $\m_j$ and $\underline{\m}$ both solve \eqref{eq:subsolutonPDE1}, $\tilde{\m}_j$ are divergence-free vector fields. This finishes the proof of the corollary.
\end{proof}

\section{Proof of the Main Results}\label{secadweak}

\subsection{Constructions of finite-states admissible solutions}\label{subsec:finite-value}
The starting points of the constructions are the piecewise constant stationary strict subsolutions inspired by \cite{dLSz2}. 
\begin{proof}[Proof of Theorem \ref{Thm:FiniteValue}]
	
	Let $\chi$ be a constant satisfying that $\chi \geq \max_i p(\bar{\rho_i})$. Let 
	\[ \bar{q}_i = -p(\bar{\rho_i}) + \chi  \quad \text{ and } \quad I = \{i:\bar{q}_i > 0\}. \]
	For each $i \in I$, Lemma \ref{Lemma:FiniteValue}  yields  a set of $N_n^*$ states $\mathcal{K}_i \subset K_{\bar{\rho}_i,\bar{q}_i}$  such that $(0,0) \in \intco \mathcal{K}_i$. For $i \notin I$, let $\mathcal{K}_i=\{(0,0)\}$.
	Set
	 \begin{equation}
	 q(x,t) = \bar{q}_i, \quad \text{ and } \quad K(x,t) = \mathcal{K}_i, \quad \text{ for }  x \in \Omega_i.
	 \end{equation}
	Let $\mcD=(\bigcup_{i \in I} \Omega_i) \times (-1,\infty)$. It is easy to see that  $(\rho,0,0,q)$ is a  strict subsolution in $\mcD$ with the piecewise constant constrained set $K_{(x,t)}$.
	Corollary \ref{Cor:Sol} yields  infinitely many weak solutions $(\rho,{\m}^\flat)$ to the system \eqref{eq:CES} for some initial data  $(\rho_0,\m^\diamond)$. 
	
	For $i \notin I$, $q = 0$ in $\Omega_i$. Hence it follows from \eqref{eq:ModifiedSol} and \eqref{eq:MSEnergy} that 
	\[ (\rho,\m^\flat)(x,t) = (\brho_i,0) \quad \text{ for a.e. } (x,t) \in \Omega_i \times \bRp.\]
	For $i \in I$,  denote $\mathcal{K}_i = \{\bar{w}^l\}_{l=1}^{N_n^*}$ and $w = (\m^\flat,\dfrac{\m^\flat \otimes \m^\flat}{\rho}-q \mb{I})$. It follows from \eqref{Sol} that  $w \in X$ and $w(x,t) \in \{\bar{w}^l\}_{l=1}^{N_n^*}\, a.e. $ in $\Omega_i \times \bRp$. Hence $w$ has at most $N_n^*$ states in $\Omega_i$.
	Since $0 \in \intco \{\bar{w}^l\}_{l=1}^{N_n^*}$, one can uniquely represent $0$ as a convex combination of $\{\bar{w}^l\}_{l=1}^{N_n^*}$:
	\begin{equation}
	0 = \sum_{l=1}^{N_n^*} \mu_l \bar{w}^l, \quad 0 < \mu_l < 1, \,\, \sum_{l=1}^{N_n^*} \mu_l=1.\label{rep0}
	\end{equation}
	It follows from the proof of Corollary \ref{Cor:Sol} that there exist $\{w_k\}\subset X^0$ with 
	 \[ w_k \to w \text{ in }CL^\infty_{w*}, \,\, w_1 = 0, \,\, w_{k+1}-w_k = \sum_{j=1}^{J_k}a_j^{(k)} \text{ for } k\geq 1, \]
	  where $a_j^{(k)}$ satisfies  $\supp a_j^{(k)}  \subset Q^{j,(k)} \subset \Omega_{l(j)} \times \bRp $ for some $l(j)$ and $\int a_j(x,t) dx = 0$ due to  \eqref{a_j}. Hence $\int_{\Omega_i} w_k(x,t)dx = 0$. For any compact time interval $J \subset \bRp$, it holds that 
	 \[ \int_{\Omega_i \times J} w(x,t)dx dt  = \lim_k \int_{\Omega_i \times J} w_k(x,t)dx dt  = 0.\] 
	 Since $w(x,t) \in \{\bar{w}^l\}_{l=1}^{N_n^*}\, a.e. $ in $\Omega_i \times \bRp$, it holds that
	 \[ 0=\int_{\Omega_i \times J} w(x,t)dx dt = \sum \nu_l \bar{w}^l, \quad \text{ where } \nu_l =\mcH^{n+1}\left(\{(x,t) \in \Omega_i \times J :w(x,t)=\bar{w}^l\}\right).\]
	 It follows from \eqref{rep0} that $\nu_l = \mu_l |\Omega_i \times J| >0$. Hence $(\rho,\m^\flat)$ has exactly $N_n^*$ states in $\Omega_i$.
	
	It remains to consider the energy inequality \eqref{ineq:energy}. It follows from  \eqref{eq:ModifiedSol} and \eqref{eq:MSEnergy} that
	\begin{align*}
	&\int_{0}^{\infty}\int_{\Omega}\left(\mcI(\rho)+ \frac{|\m^\flat|^2}{2\rho}\right)  \partial_t \varphi + \int_{\Omega} \left(\mcI(\rho_0(x))+ \frac{|\m_0(x)|^2}{2\rho_0}\right) \varphi(x,0)dx = 0.
	\end{align*}
	Furthermore denote $P(\rho,q)=\rho^{-1}(\mcI(\rho)+ \frac{n}{2}q+p(\rho))$. It holds that
	\[ \int_{0}^{\infty}\int_{\Omega}\left(\mcI(\rho)+ \frac{|\m^\flat|^2}{2\rho}+p(\rho)\right) \frac{\m^\flat}{\rho} \cdot \nabla \varphi dxdt = \int_{0}^{\infty}\int_{\Omega} P(\rho,q)\m^\flat \cdot \nabla \varphi dx dt. \]
	It follows from \eqref{m_j} that there exist divergence-free vector fields $\tilde{\m}_j \in C_c^\infty(\mcD)$ with $\tilde{\m}_j \to \m^\flat$ in $CL^\infty_{w*}$.  From the definition of $\mcD$, it holds that $\tilde{\m}_j = 0$ on $\ptl \Omega_i$ for $i \in I$ and $\tilde{\m}_j = 0$ in $\Omega_l$ for $l \notin I$. Hence one has
	\[ \int_{0}^{\infty}\int_{\Omega} P(\rho,q)\tilde{\m}_j \cdot \nabla \varphi dx dt = \sum_{i \in I} P(\bar{\rho}_i,\bar{q}_i)\int_{0}^{\infty}\int_{\Omega_i} \tilde{\m}_j \cdot \nabla \varphi dx dt = 0.
	\]
	Therefore, 
	\[ \int_{0}^{\infty}\int_{\Omega} P(\rho,q)\m^\flat \cdot \nabla \varphi dx dt= \lim_j \int_{0}^{\infty}\int_{\Omega} P(\rho,q)\tilde{\m}_j \cdot \nabla \varphi dx dt = 0. \]
	Since $\mb{B}$ is antisymmetric, it follows that $(\rho,\m^\flat)$ satisfies \eqref{ineq:energy} with equality.
%
\end{proof}

%

\subsection{Constructions of admissible solutions with a general source}\label{subsec:general-source}
Given $\rho_0$ satisfying \eqref{ass:smallness}, the acoustic potential $\Psi$ in \cite{Feireisl14} is employed to construct a strict subsolution such that $\rho$  becomes constant in finite time.

\begin{proof}[Proof of Theorem \ref{Thm:GeneralSource}]
	Consider first the  case: $\rho_0 = \rho^\sharp$.
	Let
	$q=e^{-2\beta t}\delta_0$ for some $\delta_0>0$, where $\beta$ is defined in \eqref{eq:Defb}. It follows from \eqref{ineq:SubsolutionNonstrict}  that $(\rho^\sharp, 0, 0, q)$ is a strict subsolution in $\mcD = \T^n \times (-1,\infty)$ with constrain sets $\ K_{\rho^\sharp,q(x,t)}$.  Then Corollary \ref{Cor:Sol} yields  infinitely many weak solutions $(\rho^\sharp, {\m}^\flat)$ to the system \eqref{eq:CES} with initial data $(\rho^\sharp,\m^\diamond)$. Since $\nabla \cdot {\m}^\flat=-\ptl_t \rho^\sharp = 0$, it follows from  \eqref{eq:Defb}, \eqref{eq:ModifiedSol}, and \eqref{eq:MSEnergy}   that
	\begin{align*}
	&\ptl_t \left(\mcI(\rho^\sharp)+\frac{1}{2}\frac{|{\m}^\flat|^2}{\rho^\sharp}\right) + \nabla \cdot \left[\left(\mcI(\rho^\sharp)+\frac{1}{2}\frac{|{\m}^\flat|^2}{\rho^\sharp}\right)\frac{{\m}^\flat}{\rho^\sharp}\right] - \mb{B}\frac{\m^\flat}{\rho^\sharp}\cdot \m^\flat \leq\frac{n}{2} (\ptl_t {q}+2\beta {q}) = 0.
	\end{align*}
	Therefore,  the energy inequality \eqref{ineq:energy} holds for the weak solution $(\rho^\sharp, \m^\flat)$.
	
	Consider now the case that $\rho_0$ is a small perturbation of $\rho^\sharp$ under the assumption \eqref{ass:smallness}.  
	Let $h(t)$ be a smooth cut-off function satisfying that
	\begin{align}\label{eq:Defh}
	h \in C_c^\infty((-1,1)), \quad h(0) = 1, \quad  0\leq h(t)\leq 1, \quad \text{and}\quad  |h'(t)| \leq 2.
	\end{align}
	Define
	\begin{equation}\label{def:rhomq}
	\begin{aligned}
	\rho(x,t)=[1-h(t)] \rho^\sharp +h(t)\rho_0(x), \quad q(x,t) =  p(\rho^\sharp) - p(\rho(x,t)) + \chi(t)
	\end{aligned},
	\end{equation}
	and
	\begin{equation}
	\quad \m(x,t) = h'(t)\nabla \Psi(x),\quad  {\U} = \mathcal{R}_{\T^n}[-h''(t)\nabla \Psi(x)+h'(t)\mb{B}\nabla \Psi(x)],
	\end{equation}
	where $\chi(t)$ is a function to be determined and $\Psi$ is the unique solution in $\mcL(\mathbb{T}^n)$ of the problem
	\begin{equation*}
	\Delta \Psi =\rho^\sharp -\rho_0\quad \text{in}\,\, \mathbb{T}^n.
	\end{equation*}

	It follows standard estimates for the Poisson equation \cite{GT} and Lemma \ref{Lemma:OpRTorus} that
	\begin{equation}
	\begin{aligned}
	\|\Psi\|_{C^{2,\alpha}(\mathbb{T}^n)}\leq  C\eps \quad\text{and}\quad  \|{\U}\|_{C^{1,\alpha}(\mathbb{T}^n)}\leq  C (|h'(t)|+|h''(t)|)\eps.
	\label{ineq:SchauderEstimates}
	\end{aligned}
	\end{equation}
	Furthermore, the  density can be estimated as
	\begin{equation}\label{ineq:rhoest}
	0<\check{\rho}\leq \rho(x,t) \leq \hat{\rho}, \quad
	\|\rho^\sharp  - \rho\|_{L^\infty(\T^n)} \leq h(t)\sup|\nabla \rho_0| \leq h(t)\eps.
	\end{equation}
	The direct computation shows that $(\rho, \m, \U, q)$ solves \eqref{eq:subsolutonPDE1} and \eqref{eq:subsolutonPDE2}.
	One has
	\begin{align*}
	\frac{\m\otimes \m}{\rho}- \U - q \mb{I} =\frac{(h')^2}{\rho} \nabla \Psi \otimes \nabla \Psi  -  {\U} - \left( p(\rho^\sharp ) - p(\rho) + \chi\right)\mb{I}.
	\end{align*}
	It follows from \eqref{ineq:SchauderEstimates} and \eqref{ineq:rhoest} that the inequality \eqref{ineq:SubsolutionNonstrict} holds, provided one can show 
	\begin{align}\label{ineq:chiSubsolution}
	\chi(t)>C(\hat{\rho},\check{\rho})[h'(t)^2\eps+|h'(t)|+|h''(t)|+h(t)]\eps \end{align}
	for $t \in (-\tau_0,\infty)$ with some small $\tau_0>0$. Then  $(\rho,\m,\U,q)$ is a strict subsolution in $\mcD = \T^n\times(-\tau_0,\infty)$ with constrain sets $K_{\rho(x,t),q(x,t)}$. Therefore
		Corollary \ref{Cor:Sol} yields  infinitely many weak solutions $(\rho,{\m}^\flat)$ to the system \eqref{eq:CES} with initial data  $(\rho_0,\m^\diamond)$. It remains to check that these weak solutions are admissible.
			
	Recalling that $\beta$ is defined in \eqref{eq:Defb}, it follows from  \eqref{eq:ModifiedSol} and \eqref{eq:MSEnergy} that
	\begin{equation}\label{ineq:energyChi}
	\begin{aligned}
	& \ptl_t\left(\mcI(\rho)+ \frac{|{\m}^\flat|^2}{2\rho}\right) + \nabla \cdot\left[\left(\mcI(\rho)+\frac{ |{\m}^\flat|^2}{2\rho}+{p(\rho)}\right)\frac{{\m}^\flat}{\rho}\right] -\frac{{\m}^\flat\cdot (\mb{B}{\m}^\flat)}{\rho}\\
	\leq &\frac{n}{2}[\ptl_t \chi(t)-\ptl_tp(\rho) ] +\ptl_t \mcI(\rho) + n\beta[p(\rho^\sharp) - p(\rho(x,t)) + \chi(t)]\\
	&+\nabla \cdot\left[\frac{\mcI(\rho)+p(\rho)+(n/2)(p(\rho^\sharp)-p(\rho))}{\rho} {\m}^\flat\right] + \frac{n}{2}\chi(t)\nabla \cdot \left(\frac{{\m}^\flat}{\rho}\right). 
	\end{aligned}
	\end{equation}
	
	For $t \in [0,\infty)$, since $\supp h \subset (-1,1)$, it follows that  $(\rho,\m,\U,q)=(\rho^\sharp, 0, 0, \chi)$ for $t \geq 1$, thus reducing to the case  of constant density.  Hence if $\chi(1)> 0$, then $\chi(t)$ can be defined on $[1,\infty)$ satisfying \eqref{ineq:chiSubsolution} and the energy inequality \eqref{ineq:energy}.
	
	For $t\in [0,1)$,
	in view of \eqref{ineq:SchauderEstimates}, \eqref{ineq:rhoest} and the estimates
	\begin{align*}
	&|\nabla \cdot {\m}^\flat|  = |\partial_t \rho|\leq |h'(t)|\eps, \quad |p(\rho^\sharp)-p(\rho)| \leq C(\hat{\rho})(\rho^\sharp - \rho) \leq C(\hat{\rho}) \eps,\\
	& |{\m}^\flat| = \sqrt{n\rho [p(\rho^\sharp)-p(\rho)+\chi]}\leq C(\hat{\rho})(\sqrt{\eps}+\sqrt{\chi}),
	\end{align*}
	it is not hard to see that the weak solution $(\rho, \m^\flat)$ satisfies
	\begin{equation}\label{energyineq}
	\ptl_t\left(\mcI(\rho)+ \frac{|{\m}^\flat|^2}{2\rho}\right) + \nabla \cdot\left[\left(\mcI(\rho)+\frac{ |{\m}^\flat|^2}{2\rho}+{p(\rho)}\right)\frac{{\m}^\flat}{\rho}\right] -\frac{{\m}^\flat\cdot (\mb{B}{\m}^\flat)}{\rho}\leq 0,
	\end{equation}
	as long as $\chi$ satisfies
	\begin{equation}\label{ineq:energyChi01}
	\begin{aligned}
	\ptl_t \chi(t)
	&\leq -2\beta \chi-C(\check{\rho},\hat{\rho},\beta)(\chi^{3/2}+\chi+\chi^{1/2}+1)\eps \quad \text{ for } t \in [0,1).
	\end{aligned}
	\end{equation}
	Therefore there exists a $\bar{\eps}$ depending on $\check{\rho},\hat{\rho},$ and $\beta$ such that if $\eps<\bar{\eps}$, it is possible to find a smooth function $\chi(t)$ satisfying \eqref{ineq:chiSubsolution} and \eqref{ineq:energyChi01} on $[0,1]$.  Hence the solution $(\rho, \m^\flat)$ satisfies
	the energy inequality \eqref{ineq:energy}. Observe that for sufficiently small $\eps$, one may take $\delta=\chi(0)$ to be small as well.
	Finally, the estimate \eqref{ineq:decay1} follows from the constructions.
\end{proof}

\subsection{Constructions of admissible solutions with general density}
The aim is to remove the small-oscillation assumption \eqref{ass:smallness} on $\rho_0$ in the case that $\mb{B}$ is anti-symmetric. However, since \eqref{ineq:energyChi01} is a Riccati-type ordinary differential inequality, the assumption  seems essential for the smooth subsolution ansatz  in \cite{Feireisl14}. The key idea is the construction of a non-smooth  subsolution ansatz based on the  ansatz in \cite{Feireisl14}, which transits to piecewise constant states  before the blow-up time.
\begin{proof}[Proof of Theorem \ref{Thm:GeneralDensity}]
	It is assumed that $\rho_0$ is piecewise Lipschitz. Therefore there exists a family of mutually disjoint open sets $\{\Omega_i \}_{i=1}^\infty \subset \R^n$ with $\rho_0 \in W^{1,\infty}(\Omega_i)$, 
	\begin{equation}
	\R^n = \bigcup_{i=1}^\infty \overline{\Omega_i}\quad \text{and}\quad  \quad \mcH^n\left(\bigcup_{i=1}^\infty \ptl \Omega_i\right) = 0,
	\end{equation}
	and there are positive constants $\check{\rho}$, $\hat{\rho}$, $\bar \varrho$ such that 
	\begin{align}\label{eq7}
	0<\check{\rho} \leq \inf \rho_0(x) \leq \sup\rho_0(x) \leq \hat{\rho}<\infty \quad \text{and}\quad  \quad \sup_i \|\nabla \rho_0\|_{L^\infty(\Omega_i)}  \leq \overline{\varrho} < \infty.
	\end{align}

	Let $T\in (0, 1)$ and $\vartheta\in (0, T)$ be two positive constants to be determined.  It follows from the Whitney covering lemma \cite[Theorem 3, p.16]{Stein70} that there exits a countable family of mutually disjoint open cubes $\{\msQ^{(j)}=\msQ_{r_j}(x^{(j)})\}_{j=1}^\infty$ such that
	\begin{align*}
	\bigcup_{i=1}^\infty \Omega_i = \bigcup_{j=1}^\infty \overline{\msQ^{(j)}}, \quad \overline{\msQ^{(j)}}\subset \Omega_{i(j)} \text{ for some } i(j),
	\end{align*}
	and
	\begin{align*}
	r_j \leq \min\left[\text{dist}(\msQ^{(j)},\ptl {\Omega_{i(j)}}),  (1+|x^{(j)}|^2)^{-\frac{ {n}+1}{2}}(1+\overline{\varrho})^{-1}\vartheta \right]\text{ for } j\in \mathbb{N}.
	\end{align*}
	Since every $\msQ^{(j)}$ lies in some $\Omega_{i(j)}$, so $\rho_0 \in W^{1,\infty}(\msQ^{(j)})$. 
	
	Let $\Psi_i$ be the unique solution in $\mcL(\msQ^{(i)})$ to the problem
	\begin{equation*}
	\left\{
	\begin{aligned}
	&\Delta \Psi_i = {\rho}_i^\sharp - \rho_0  &\text{ in } \msQ^{(i)},\\
	& \frac{\ptl \Psi_i}{\ptl \nu} = 0 &\text{ on } \ptl \msQ^{(i)},
	\end{aligned}
	\right.
	\end{equation*}
	where
	\begin{align}
	{\rho}_i^\sharp  = \dfrac{1}{|\msQ^{(i)}|}\int_{\msQ^{(i)}}\rho_0(x) dx.
	\end{align}

	Let $h$ be a smooth function satisfying \eqref{eq:Defh} and define
	\[h_T(t)=h(t/T) \in C_c^\infty((-T,T)).\]
	For $(x, t)\in \msQ^{(i)} \times (-1,\infty)$, set
	\begin{align*}
	\rho_i(x,t) = [1-h_T(t)]{\rho}_i^\sharp + h_T(t) \rho_0(x), \quad \m_i(x,t) = h_T'(t)\nabla \Psi_i(x),
	\end{align*}
	and
	\begin{align*}
	q_i(x,t) = -p(\rho_i(x,t))+\chi(t),
	\end{align*}
	where $\chi(t)$ is a positive function to be determined satisfying that
	\begin{equation}\label{ineq:chiDecay'}
	\chi(t)=\chi(T) > p(\hat{\rho})  \quad \text{ for } t \in [T,\infty).
	\end{equation}
	Recall that $\mcH^n(\R^n \setminus (\cup_i\msQ^{(i)}))=0$, it suffices to define $\rho,\m,q$ on each cube as
	\begin{equation}
	\rho = \rho_i,\,\, \m = \m_i,\,\, q = q_i,\,\, \text{ in } \msQ^{(i)} \times (-1,\infty). \label{Q_i}
	\end{equation}
	Let $\Phi(y)=r_i^{-2}\Psi_i(x^{(i)}+r_i y)$ and $f(y)={\rho}_i^\sharp - \rho_0( x^{(i)}+r_i y)$. Then $\Phi$ solves
	\begin{equation}
	\left\{
	\begin{aligned}
	& \Delta \Phi = f \text{ in } \msQ_1,\\
	&  \frac{\ptl \Phi}{\ptl \nu}  = 0 \text{ on } \ptl \msQ_1.
	\end{aligned}
	\right.
	\end{equation}
	Since
	\[\|f\|_{C^1(\msQ_1)} \leq r_i \|\nabla \rho_0\|_{L^\infty(\msQ^{(i)})} \leq (1+|x^{(j)}|^2)^{-\frac{{n}+1}{2}}\vartheta, \]
	it follows from the standard estimates for the Poisson equation \cite{GT} that
	\[ \|\nabla \Phi\|_{C^{1,\alpha}(\msQ_1)} \leq C (1+|x^{(j)}|^2)^{-\frac{{n}+1}{2}}\vartheta. \]
	Hence it holds that
	\begin{equation}
	\label{ineq:Estimates_m}
	\|\nabla \Psi_i\|_{L^\infty(\msQ^{(i)})}\leq r_i\|\nabla \Phi\|_{L^\infty(\msQ_1)} \leq C (1+|x^{(j)}|^2)^{-({n}+1)}\vartheta^2.
	\end{equation}
	Thus
	\begin{equation*}
	\|\m\|_{L^\infty(\R^n)} = |h_T'(t)|\sup_i \|\nabla \Psi_i\|_{L^\infty(\msQ^{(i)})} \leq C T^{-1}\vartheta^2, \quad 	\|\ptl_t \m\|_{L^\infty(\R^n)} \leq CT^{-2}\vartheta^2.
	\end{equation*}
	Since the cubes $\{\msQ^{j}\}$ have disjoint interiors, so
	\begin{align*}
		&\sum_j \|\nabla \Psi_j\|_{L^1(\msQ^{(j)})} \leq \sum_j \|\nabla \Psi_j\|_{L^\infty(\msQ^{(j)})}|\msQ^{(j)}| \\
		\leq&  
		 \sum_{l=0}^\infty \sum_{2^l \leq |x^{(j)}| \leq 2^{l+1}} \|\nabla \Psi_j\|_{L^\infty}|\msQ^{(j)}| + \sum_{ |x^{(j)}| \leq 1} \|\nabla \Psi_j\|_{L^\infty}|\msQ^{(j)}| \\
		\leq& \sum_{l=0}^\infty (1+|2^l|^2)^{-(n+1)}\vartheta^2|B_{2^{l+1}+1}| + \vartheta^2|B_2| \leq C\vartheta^2 \left(\sum_{l=0}^\infty 2^{-l}+1\right) \leq C \vartheta^2.
	\end{align*}
	Thus
	\[ \|\m\|_{L^1(\R^n)} = |h_T'(t)| \sum_j \|\nabla \Psi_j\|_{L^1(\msQ^{(j)})} = C T^{-1}\vartheta^2,\quad  \|\ptl_t \m\|_{L^1(\R^n)}  \leq C T^{-2}\vartheta^2.\]
	Hence we can define 
	\[\U = \mathcal{R}[-\ptl_t \m+\mb{B}\m].\]
	It follows from Property (4) of Lemma \ref{Lemma:OpR} that
	\begin{equation}\label{ineq:Estimates_U}
	\|\U\|_{C^\alpha(\R^n)}  \leq  C T^{-2}\vartheta^2, 
	\end{equation}
	where the constant $C$ depends only on $\alpha, n$ and the constant matrix $\mb{B}$.
	Since	
	\begin{equation}\label{eq:mbdQi}
	\nu \cdot \m_i = h_T'(t) \nu \cdot \nabla \Psi_i = 0 \quad \text{ on } \ptl \msQ^{(i)},
	\end{equation}
	and that $\ptl_t \rho_i + \nabla \cdot \m_i = 0$ inside $\msQ^{(i)}$, it holds that for $\varphi \in C_c^\infty(\R^n \times (-1,\infty))$
	\[ \int_{-1}^{\infty}\int_{\R^n} \rho \ptl_t \varphi +  \m \cdot \nabla \varphi \, dxdt = \sum_i \int_{-1}^{\infty} \int_{\msQ^{(i)}} \rho_i \ptl_t \varphi +  \m_i \cdot \nabla \varphi \, dx dt = 0. \]
	Furthermore, direct computations yield
	\begin{align*}
	\ptl_t \m + \nabla \cdot \U + \nabla(p+q) - \mb{B} \m = \ptl_t \m + \nabla \cdot \mathcal{R}[-\partial_t\m+\mb{B}\m] + \nabla\chi(t) -  \mb{B} \m=0, 
	\end{align*}
	where $p+q = \chi$ is used.
	It follows that $(\rho,\m,\U,q)$ solves \eqref{eq:subsolutonPDE1} and \eqref{eq:subsolutonPDE2}  in the sense of distribution.
	
	Since $\supp h_T \subset (-T,T)$, denoting $q_i^\sharp = \chi(T)-p(\rho_i^\sharp)$, it holds that 
	\begin{equation}
	(\m,\U) = (0,0) \quad \text{ for } t \in [T,\infty), \quad (\rho,q) = ({\rho}_i^\sharp,q_i^\sharp) \quad \text{ for } (x,t) \in \msQ^{i} \times [T,\infty). \label{afterT}
	\end{equation} 
	Since $\dfrac{\m\otimes \m}{\rho}- \U - q \mb{I} \leq  \left(\dfrac{|\m|^2}{\rho} + |\U| +p(\rho)- \chi(t)\right)\mb{I}$,
	it follows from \eqref{ineq:Estimates_m} and \eqref{ineq:Estimates_U} that  the inequality \eqref{ineq:SubsolutionNonstrict} holds provided that
	\begin{align}\label{ineq:chiSubsolution'}
	\chi(t)>p(\hat{\rho})+ C(\check{\rho},\hat{\rho})  T^{-2}\vartheta^2 1_{(-T,T)}(t),
	\end{align}
	where $1_{(-T,T)}(t)$ is the characteristic  function of $(-T,T)$.
	Suppose that \eqref{ineq:chiSubsolution'} holds in $(-\tau_0,T)$  for some small $\tau_0>0$. Let
	\[ \tilde{\mcD} =  \bigcup_j \msQ^{(j)} \times (-\tau_0, T), \quad \mcD^{(i)} = \msQ^{(i)} \times (T, \infty), \quad \mcD = \tilde{\mcD} \cup \left(\bigcup_i \mcD^{(i)}\right).\]
	For each $i$,  Lemma \ref{Lemma:FiniteValue}  yields  a set of $N_n^*$ states $\mathcal{K}_i\subset K_{{\rho}_i^\sharp,q_i^\sharp}$  such that $(0,0) \in \intco \mathcal{K}_i$. Set 
	\[K_{(x,t)}=\begin{cases}
	K_{\rho(x,t),q(x,t)} &\text{ in } \tilde{\mcD},\\
	\mathcal{K}_i &\text{ in } \mcD^{(i)}.
	\end{cases}  \]
	It follows from  \eqref{afterT} and \eqref{ineq:chiSubsolution'}  that $(\rho,\m,\U,q)$ is a strict subsolution in $\mcD$ with constrain sets $K(x,t)$.
	Therefore Corollary \ref{Cor:Sol} yields  infinitely many weak solutions $(\rho,{\m}^\flat)$ to the system \eqref{eq:CES} with initial data  $(\rho_0,\m^\diamond)$.
	
	 For $t \geq T$, in view of \eqref{afterT}, as in the proof of Theorem \ref{Thm:FiniteValue}, we can show that $(\rho,\m)$ has exactly $N_n^*$ states in $\msQ^{i} \times [T,\infty)$, and that the energy inequality \eqref{ineq:energy} holds with equality in $[T,\infty)$.
	 	
	For $t\in [0,T)$, observe that in view of \eqref{m_j} and \eqref{eq:mbdQi}, it suffices to show that the energy inequality \eqref{ineq:energy} holds in each $\msQ^{(i)}$.
	It follows from \eqref{eq:ModifiedSol} and \eqref{eq:MSEnergy} that
	\[
	\dfrac{|\m^\flat|^2}{2\rho}=\dfrac{n q_i}{2}\quad\text{in}\quad  \msQ^{(i)}.
	\] 
	Recall that $\rho = \rho_i$ and $q = q_i$ in $\msQ^{(i)}$. When $\mb{B}$  is antisymmetric, the energy inequality \eqref{ineq:energy} is reduced to  
	\begin{equation}
	\begin{aligned}
	&\partial _t\left(\mcI(\rho_i)+ \frac{|\m^\flat|^2}{2\rho}\right) + \nabla \cdot \left\{\left(\mcI(\rho_i)+\frac{|\m^\flat|^2}{2\rho}+p(\rho_i)\right) \frac{\m^\flat}{\rho}\right\} \\
	&= \partial _t\left(\mcI(\rho_i)+ \frac{n q_i}{2}\right) + \nabla \cdot \left\{\left(\mcI(\rho_i)+\frac{n q_i}{2}+p(\rho_i)\right) \frac{\m^\flat}{\rho_i}\right\} \leq 0 \quad \text{ in } \msQ^{(i)} \times [0,T). \label{ineq:eQi}
	\end{aligned}
	\end{equation}
	Direct computations yield
	\begin{equation*}\label{ineq:energyQiInteral}
	\begin{aligned}
	 &\partial _t\left(\mcI(\rho_i)+ \frac{n q_i}{2}\right) + \nabla \cdot \left\{\left(\mcI(\rho_i)+\frac{n q_i}{2}+p(\rho_i)\right) \frac{\m^\flat}{\rho_i}\right\} \\
	=&\left( \frac{n}{2}\ptl_t \chi(t)-\frac{n}{2}\ptl_t p(\rho_i)+ \ptl_t\mcI(\rho_i) \right) + \nabla \cdot\left[\left(\mcI(\rho_i)+(1-\frac{n}{2}){p(\rho_i)}\right)\frac{\m^\flat}{\rho_i}\right]\\
	& + \frac{n}{2}\chi(t)\nabla \cdot \left(\frac{\m^\flat}{\rho_i}\right)\\
	= &\frac{n}{2}\left\{\ptl_t \chi(t)-
	\chi(t)\frac{\rho_i \ptl_t \rho_i+\m^\flat\cdot \nabla \rho_i}{\rho_i^2}+\m^\flat \cdot \nabla \left[\frac{2\mcI(\rho_i)+(2-n)p(\rho_i)}{n\rho_i}\right]\right.\\
	&\quad \left.- \left[\frac{2\mcI(\rho_i)+(2-n)p(\rho_i)}{n\rho_i}+p'(\rho_i)-\frac{2}{n}\mcI'(\rho_i)\right]\ptl_t \rho_i\right\},
	\end{aligned}
	\end{equation*}
	where $\nabla \cdot \m^\flat  = -\partial_t \rho_i$ in $\msQ^{(i)}$ is used in the last equality.
	Furthermore, the construction gives
	\begin{align*}
	& |\ptl_t \rho_i|= |h'_T(t)(\rho_0-{\rho}_i^\sharp)|\leq |h'_T(t)|r_i\|\nabla \rho_0\|_{L^\infty(\msQ^{(i)})}\leq C T^{-1}\vartheta,\\
	& |\nabla \rho_i| \leq \|\nabla \rho_0\|_{L^\infty(\msQ^{(i)})} \leq \overline{\varrho}, \quad \check{\rho}\leq \rho_i \leq \hat{\rho}, \quad |\m^\flat| = \sqrt{n\rho_iq_i} \leq (n \hat{\rho})^{1/2}\sqrt{\chi},
	\end{align*}
	therefore, the energy inequality \eqref{ineq:eQi} follows from
	\begin{align}\label{ineq:energyChi'}
	\chi'(t) \leq -C_3 \overline{\varrho} \chi^{3/2} - C_2 \chi - C_1 \overline{\varrho} \chi^{1/2} -C_0,\quad  0 \leq t <T,
	\end{align}
	where the positive constants $C_j=C_j(\check{\rho},\hat{\rho})$ are independent of $\msQ^{(i)}$ and the choices of the constants $T$ and $\vartheta$. Therefore, choosing the constants $T$ and $\vartheta$ to be sufficiently small, we can find a smooth function $\chi$ satisfying \eqref{ineq:chiDecay'}, \eqref{ineq:chiSubsolution'},  and \eqref{ineq:energyChi'} on $[0,\infty)$. It is not hard to see that $T$ can be chosen to be of the order of  $C(\check{\rho},\hat{\rho}) \overline{\varrho}^{-1}$. This finishes the proof of the theorem.
\end{proof}

\bigskip

{\bf Acknowledgement.}
The work is a part of the PhD thesis of the first author. Part of the work was done when the second author visited the Institute of Mathematical Sciences at The Chinese University of Hong Kong and the first author visited Shanghai Jiao Tong University. They thank the both institutions for their support and hospitality.
Xie is
supported in part by NSFC grants 11201297 and 11422105, Shanghai Chenguang
Program,  and the Program for
Professor of Special Appointment (Eastern Scholar) at Shanghai
Institutions of Higher Learning. Luo and Xin are supported in part by Zheng
Ge Ru Foundation, Hong Kong RGC Earmarked Research Grants
CUHK4041/11P, and CUHK4048/13P, a Focus Area Grant from The Chinese
University of Hong Kong, and a CAS-Croucher Joint Grant.

\end{document}